\theoremstyle{plain}
\newtheorem{thm}{Theorem}[section]
\newtheorem{prop}[thm]{Proposition}
\newtheorem{lem}[thm]{Lemma}
\theoremstyle{definition}
\theoremstyle{remark}
\newtheorem{rem}[thm]{Remark}
\theoremstyle{plain}
\newcommand{\R}{\mathbb{R}}
\newcommand{\CP}{\mathbb{C}P}
\newcommand{\dimn}{\mathrm{dim}}
\newcommand{\supp}{\mathrm{supp}}
\newcommand{\scal}{\mathrm{scal}}
\newcommand{\ric}{\mathrm{Ric}}
\newcommand{\trace}{\mathrm{tr}}
\newcommand{\dv}{\text{ }dV}
\newcommand{\spectrum}{\mathrm{spec}}
\renewcommand{\title}[1]{{\bfseries #1}\par}
\renewcommand{\author}[1]{\medskip{#1}\par\smallskip}
\newcommand{\affiliation}[1]{{\itshape #1}\par}
\newcommand{\email}[1]{E-mail:~\texttt{#1}\par}
\numberwithin{equation}{section}
\begin{document}
\begin{center}
\title{\LARGE Stable and unstable Einstein warped products}
\vspace{3mm}
\author{\Large Klaus Kröncke}
\vspace{3mm}
\affiliation{Universität Hamburg, Fachbereich Mathematik\\Bundesstraße 55\\20146 Hamburg, Germany}

\email{klaus.kroencke@uni-hamburg.de} 
\end{center}
\vspace{2mm}
\begin{abstract}In this article, we systematically investigate the stability properties of certain warped product Einstein manifolds. We characterize stability of these metrics in terms of an eigenvalue condition of the Einstein operator on the base manifold. In particular, we prove that all complete manifolds carrying imaginary Killing spinors are strictly stable. Moreover, we show that Ricci-flat and hyperbolic cones over K\"ahler-Einstein Fano manifolds and over nonnegatively curved Einstein manifolds are stable if the cone has dimension $n\geq10$.
\end{abstract}

\section{Introduction}
Let $(M,g)$ be an Einstein manifold. The Einstein operator $\Delta_E$ acting on $C^{\infty}(S^2M)$ is defined by $\Delta_E=\nabla^*\nabla-2\mathring{R}$, where $S^2M$ is the bundle of symmetric $2$-tensors and $\mathring{R}$ is a zero-order operator, defined by $\mathring{R}h_{ij}=R_{iklj}h^{kl}$. We call an Einstein manifold strictly stable if there exists a constant $C>0$ such that
\begin{align}\label{strictlystable}
\int_{M}\langle \Delta_E h,h\rangle \dv\geq C \left\|h\right\|_{L^2}^2\qquad 
\end{align}
for all compactly supported $h\in C^{\infty}(S^2M)$ satisfying $\int_M\trace h\dv=0$ and $\delta h=0$ where $\delta h$ is the divergence of $h$. We call $(M,g)$ stable, if \eqref{strictlystable} holds with $C=0$ and unstable, if it is not stable.

To give some examples, we mention that the round sphere, the hyperbolic space and their quotients are strictly stable. The flat euclidean space and $\CP^n$ are stable but not strictly stable. A product of positive Einstein metrics is unstable. An open problem in this context is the question, whether there exists an unstable compact Einstein metric of nonpositive scalar curvature. In the zero scalar curvature case, this is also known as the positive mass problem for compact Ricci-flat manifolds \cite{Dai07}. In the noncompact case, unstable Ricci-flat metrics and unstable Einstein manifolds of negative scalar curvature are known  \cite{GPY82,War06,HHS14}.

\Footnotetext{}{2010 \emph{Mathematics Subject Classification.} 58J05,53C25,53C21.}
\Footnotetext{}{\emph{Key words and phrases.} Einstein metrics, linear stability, Ricci-flat cones, hyperbolic cones.}

The above stability condition appears in various settings.
At first, the Einstein operator appears in the second variational formula of the Einstein-Hilbert action at compact Einstein metrics \cite{Bes08}. In this context, an Einstein manifold is called stable resp.\ strictly stable, if the above conditions hold for $TT$-tensors, i.e. trace-free and divergence-free symmetric $2$-tensors. Secondly, stability also plays a role in string theory \cite{GPY82,GHP03} and mathematical general relativity \cite{AMo11}.

The Einstein operator also appears in the second variation of Perelman's entropies \cite{CHI04,Zhu11} and therefore influences the local behaviour of the Ricci flow. It has been shown that compact Einstein metrics are dynamically stable under the Ricci flow, if they are stable in the above sense and if all elements in the kernel of the Einstein operator are integrable \cite{GIK02,Ses06,Kro15b}. It is also possible to get rid of the integrability condition in this context \cite{Kro13,HM14}.

In the noncompact case, dynamical stability properties of Einstein manifolds are much less understood but results have been obtained for certain Einstein metrics \cite{SSS08,Bam15} and a nonnegative lower bound of the Einstein operator also plays an important role there.

Our main motivation was to construct new examples for stable and unstable Einstein metrics and the warped product construction is the simplest method where stability properties can be investigated.
The warped product models we consider are as follows: If $g$ is a Ricci-flat metric, $\tilde{g}=dr^2+e^{2r}g$ is an Einstein metric with scalar curvature $-n(n+1)$. If $g$ is a positive Einstein metric which scaled such that $\scal_g=n(n-1)$, the metric $\tilde{g}=dr^2+r^2g$ is Ricci-flat, whereas $\tilde{g}=dr^2+\sinh^2(r)g$ is Einstein with scalar curvature $-n(n+1)$. The main theorems of this paper are stability theorems about these models. Ricci-flat cones have already been considered in \cite{HHS14} and we partly build up on that work.

\begin{thm}\label{thmexponentialcone}
Let $(M,g)$ be a Ricci-flat metric. Then the Einstein manifold
\begin{align*}
(\widetilde{M},\tilde{g})=(\R\times M,dr^2+e^{2r}g)
\end{align*}
is stable if and only if $(M,g)$ is stable. In this case, $(\widetilde{M},\tilde{g})$ is strictly stable.
\end{thm}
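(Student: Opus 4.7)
I would use the warped-product structure to reduce the stability problem for $(\widetilde M,\tilde g)$ to a spectral problem on $(M,g)$.

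\emph{Step 1: decomposition.} Write any compactly supported $\tilde h\in \Gamma(S^2\widetilde M)$ as
\[ \tilde h = a\, dr^2 + 2\, dr\odot\omega + k, \]
where $a\in C^\infty(\widetilde M)$ and $\omega(r,\cdot),k(r,\cdot)$ are $r$-families of $1$-forms and symmetric $2$-tensors on $M$. Decompose $k = k_{TT} + u g + L_X g$ and $\omega = df + \omega_{co}$ via the Berger--Ebin and Hodge splittings on $M$, fibrewise in $r$. Translate the constraints $\delta^{\tilde g}\tilde h=0$ and $\int \trace_{\tilde g}\tilde h\dv=0$ into coupling relations between $a,u,f,X,\omega_{co}$ and their $r$-derivatives.

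\emph{Step 2: warped-product calculation.} Using standard warped-product formulas for the connection and curvature tensor (see \cite{Bes08}) together with $\ric_g=0$ and $\ric_{\tilde g}=-n\tilde g$ (where $n=\dim M$), I would expand $\Delta_E^{\tilde g}\tilde h$ componentwise. Since $\tilde g$ is Einstein, the zero-order part $\mathring R$ is computable from the curvature and, upon restriction to each sector of the decomposition, becomes block-diagonal with respect to the Hodge/Berger--Ebin splitting. The result is a system of operators on $\R\times M$ involving $\partial_r^2$, $M$-operators scaled by $e^{-2r}$, and first-order cross terms.

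\emph{Step 3: separation of variables and weight substitution.} Expand $a,\omega_{co},k_{TT}$ in orthonormal eigenbases of the scalar Laplacian on $M$, the connection Laplacian on coclosed $1$-forms, and $\Delta_E^g|_{TT}$, respectively. Since $d\tilde V=e^{nr}\,dr\,dV_g$, perform the substitution $\phi(r)\mapsto e^{-nr/2}\phi(r)$; integration by parts converts $\int(\phi')^2 e^{nr}\,dr$ into $\int(\psi')^2\,dr + \tfrac{n^2}{4}\int\psi^2\,dr$, so a Hardy-type constant $n^2/4$ is produced. The crucial observation is that the warp factor $e^{-2r}$ in front of the $M$-operator cancels against $\lambda e^{-2r}$ (after acting on an eigentensor) in exactly the right way, so the effective potential $V_\lambda$ in each mode is \emph{constant} in $r$, and the quadratic form reduces to $\int_\R(|\psi'|^2+V_\lambda\psi^2)\,dr$.

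\emph{Step 4: conclusion.} For the TT-sector with eigenvalue $\lambda$ of $\Delta_E^g$, a short computation yields $V_\lambda = \tfrac{n^2}{4}+\lambda$. Stability of $(M,g)$ ($\lambda\geq 0$) therefore gives $V_\lambda\geq n^2/4>0$, which forces $\langle\Delta_E^{\tilde g}\tilde h,\tilde h\rangle\geq \tfrac{n^2}{4}\|\tilde h\|_{L^2}^2$ on this sector. The scalar, $1$-form and Lie-derivative sectors contribute positively as well: the constraints from Step 1 eliminate the dangerous zero modes of the $M$-operators (the integral trace condition removes the constant scalar mode, and $\delta^{\tilde g}\tilde h=0$ removes killing $1$-forms and infinitesimal diffeomorphisms), so the corresponding $V_\lambda$ are strictly positive unconditionally. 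Combining all sectors proves strict stability of $(\widetilde M,\tilde g)$. Conversely, if $(M,g)$ is unstable, a $TT$-eigentensor $h_0$ on $M$ with negative Einstein eigenvalue, multiplied by a slowly varying cutoff $\chi(r)$, produces a compactly supported test tensor on $\widetilde M$ along which the Einstein quadratic form becomes negative.

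\textbf{Main obstacle.} The principal technical hurdle is the bookkeeping of the couplings imposed by the divergence-free and integral-trace constraints: these entangle $a$, $u$, $f$, and $X$ through $r$-derivatives, and disentangling them into independent modes requires a careful elimination or gauge choice. A secondary point is verifying that the cancellation of the warp factor in Step 3 really does leave an $r$-independent potential in every sector simultaneously; this hinges on the Ricci-flatness of $(M,g)$ and the exact matching between the warp exponent $2r$, the weight exponent $nr$, and the eigenvalue scaling under $g\mapsto e^{2r}g$.
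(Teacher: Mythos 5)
Your overall strategy---separating variables along $r$, expanding in eigenmodes on $M$, and substituting $\varphi\mapsto e^{-nr/2}\psi$ to produce the Hardy constant $n^2/4$---is essentially the paper's approach, but Step 3 contains a genuine error. The warp factor does \emph{not} cancel: for $\tilde h=\varphi e^{2r}h$ with $h$ a $TT$-eigentensor, $\Delta_E h=\kappa h$, the quadratic form is $\int(\varphi')^2e^{nr}\,dr+\kappa\int\varphi^2e^{(n-2)r}\,dr$, so after your substitution the effective potential is $\tfrac{n^2}{4}+\kappa e^{-2r}$, which is not constant and is unbounded below as $r\to-\infty$ when $\kappa<0$. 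Your claimed constant potential $V_\lambda=\tfrac{n^2}{4}+\lambda$ would imply stability of $\widetilde M$ for every $\lambda\geq-\tfrac{n^2}{4}$, contradicting the theorem (stability iff $\lambda\geq 0$) and also contradicting your own converse in Step 4, since with a constant potential bounded below by $n^2/4$ no cutoff could make the form negative. The correct mechanism is the paper's Lemma \ref{infima_1}: by translating the support of $\varphi$ one sees $\inf\int(\varphi')^2e^{nr}dr\big/\int\varphi^2e^{(n-2)r}dr=0$, so any $\kappa<0$ destroys stability, while $\kappa\geq0$ leaves the full Hardy term and yields the constant $\tfrac{n^2}{4}$.

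You have also mislocated the main technical difficulty. The paper never imposes $\delta^{\tilde g}\tilde h=0$ or the integral trace condition: it proves positivity of the quadratic form on \emph{all} of $C^{\infty}_{cs}(S^2\widetilde M)$, so there are no ``dangerous zero modes'' to eliminate by constraints---for instance, for $\tilde h=\varphi v\tilde g$ with $v$ constant, the term $-2\scal_{\tilde g}\int\varphi^2e^{nr}dr=2n(n+1)\int\varphi^2e^{nr}dr$ is already strictly positive. What actually requires work is the off-diagonal structure: the decomposition must be arranged as in \eqref{tensorexpansion3} so that the form block-diagonalizes into a $1\times1$, a $2\times2$ and a $3\times3$ block per mode, and the cross terms such as $(\tilde\Delta_E(\varphi f^2\delta^*\omega_i),\psi\,dr\odot f\omega_i)_{L^2(\tilde g)}$ must be absorbed against the diagonal via Young's inequality (this is where the restriction to $n\geq4$ enters). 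Your proposal mentions ``first-order cross terms'' but offers no argument that they are dominated; that absorption, not the constraint bookkeeping, is where the bulk of the proof lives.
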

\begin{thm}\label{thmricciflatcone}
Let $(M^n,g)$ be a compact Einstein metric of scalar curvature $n(n-1)$. Then the Ricci-flat cone
\begin{align*}
(\widetilde{M},\tilde{g})=(\R_+\times M,dr^2+r^2g)
\end{align*}
is stable if and only if the smallest eigenvalue of the Einstein operator on $M$ restricted to $TT$-tensors satisfies the bound $\lambda\geq -\frac{(n-1)^2}{4}$.
\end{thm}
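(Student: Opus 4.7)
The plan is to reduce the stability of the Einstein operator on the cone to a family of one-dimensional radial problems indexed by spectral data on the base $M$. I would first decompose a compactly supported symmetric 2-tensor $h$ on $\widetilde{M}=\R_+\times M$ according to the warped product structure,
\[
h(r,x)=u(r,x)\,dr^2+2\,dr\odot\omega(r,x)+v(r,x)\,r^2 g+k(r,x),
\]
with $\omega(r,\cdot)$ a 1-form and $k(r,\cdot)$ a $g$-traceless symmetric 2-tensor on $M$. Using Hodge theory for $\omega$ and the York decomposition of $k$ into a TT-part plus conformal-Killing contributions, and then expanding the resulting radial profiles in a Hilbert basis of eigenobjects of $\Delta_E^M$, of the Hodge Laplacian on 1-forms, and of the scalar Laplacian on $M$, one obtains an orthogonal decomposition of the admissible space $\{\trace_{\tilde g} h=0,\ \delta_{\tilde g} h=0,\ h\text{ compactly supported}\}$ into sectors labeled by spectral data of $M$. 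In each sector the constraints become coupled ODEs in $r$, and the quadratic form $\int\langle\Delta_E h,h\rangle\dv$ decouples into a sum of one-dimensional quadratic forms. This parallels the strategy used for Ricci-flat cones in \cite{HHS14}, on which I would build.

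Next I would focus on the pure TT-sector, which is the only one in which genuine instabilities can arise. If $k$ is a TT-eigentensor of $\Delta_E^M$ with eigenvalue $\lambda$, then a radial lift $h=\phi(r)\,k$ is automatically TT on $\widetilde{M}$, and using the warped-product formulas for the Levi-Civita connection and curvature tensor of a Ricci-flat cone one obtains an Euler-type radial operator, schematically
\[
\Delta_E^{\widetilde g}(\phi\,k)=\left(-\phi''-\tfrac{n}{r}\phi'+\tfrac{\lambda}{r^2}\phi\right)k
\]
(up to the normalization dictated by identifying $k$ with a tensor on $\widetilde{M}$). Substituting $\phi(r)=r^{-(n-1)/2}\psi(\log r)$ removes the first-order term and converts the radial quadratic form into that of the constant-coefficient Schr\"odinger operator $-\partial_t^2+\bigl(\lambda+\tfrac{(n-1)^2}{4}\bigr)$ on $\R$, whose $L^2$-spectrum is $\bigl[\lambda+\tfrac{(n-1)^2}{4},\infty\bigr)$. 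Hence the TT-sector at level $\lambda$ is nonnegative on compactly supported profiles precisely when $\lambda\geq-\tfrac{(n-1)^2}{4}$.

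From here the two implications go in opposite directions. If some TT-eigenvalue on $M$ is strictly less than $-\tfrac{(n-1)^2}{4}$, a logarithmic cutoff of the constant solution $\psi\equiv 1$ produces a compactly supported profile $\phi_\varepsilon$ such that $h_\varepsilon=\phi_\varepsilon(r)\,k$ is TT and satisfies $\int\langle\Delta_E h_\varepsilon,h_\varepsilon\rangle\dv<0$, witnessing instability. Conversely, if $\lambda\geq-\tfrac{(n-1)^2}{4}$ on every TT-eigentensor of $\Delta_E^M$, I would verify that the remaining sectors (involving $u$, $v$ and $\omega$) are nonnegative once the trace- and divergence-free constraints are imposed; here the sharp one-dimensional Hardy inequality
\[
\int_{\R_+}f'(r)^2\,r^n\,dr\geq\tfrac{(n-1)^2}{4}\int_{\R_+}f(r)^2\,r^{n-2}\,dr
\]
supplies exactly the matching constant.

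The main obstacle I expect is this last step: the sector-by-sector nonnegativity analysis for the non-TT components. The conditions $\trace_{\tilde g} h=0$ and $\delta_{\tilde g} h=0$ couple the radial profiles of $u$, $v$ and $\omega$ to each other and to eigenmodes on $M$ (including Killing and conformal Killing fields when they are present), so one must carefully check that no hidden instability is produced there, in the spirit of the analogous analysis in \cite{HHS14}. Combining this nonnegativity of auxiliary sectors with the TT-dichotomy above then yields the claimed equivalence.
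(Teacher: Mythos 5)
Your strategy is the same as the paper's: decompose compactly supported symmetric $2$-tensors on the cone into sectors indexed by spectral data on $M$, reduce each sector to radial quadratic forms, identify the TT-sector $\varphi(r)\,r^2k$ (which is indeed automatically TT on $\widetilde M$) as the one governed by $\int_0^\infty(\varphi')^2r^n\,dr+\lambda\int_0^\infty\varphi^2r^{n-2}\,dr$, and read off the threshold $-\frac{(n-1)^2}{4}$ from the sharp Hardy constant. Your logarithmic-cutoff test tensor for $\lambda<-\frac{(n-1)^2}{4}$ is exactly the paper's, so the ``only if'' direction is in order.

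The gap is the step you yourself defer: proving that the sectors built from functions and one-forms on $M$ are nonnegative. This is not a matter of the Hardy inequality ``supplying exactly the matching constant''; it is the bulk of the paper's proof, and your sketch contains none of the ingredients needed to carry it out. First, the naive splitting into $u\,dr^2$, $dr\odot\omega$, $v\,r^2g$ and a Hessian part makes the Einstein operator a fully coupled $4\times4$ block of radial forms; the paper must replace it by the combinations $v\tilde g$, $f^2(n\nabla^2v+\Delta v\cdot g)$, $dr\odot f\nabla v$ and $v(f^2g-n\,dr\otimes dr)$ precisely so that only two off-diagonal couplings survive. Second, even after that, nonnegativity of the resulting $2\times2$ and $3\times3$ blocks requires Obata's eigenvalue bounds $\mu_i\geq n-1$ and $\lambda_i\geq n$ for nonconstant eigenfunctions, plus a Young inequality with a free parameter $\alpha$ whose admissible value exists only for $n\geq5$; in dimension $n=4$ the paper has to fall back on verifying positive definiteness of an explicit $3\times3$ matrix. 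Third, your plan to first impose $\trace_{\tilde g}h=0$ and $\delta_{\tilde g}h=0$ and then analyze the constrained sectors carries its own hidden difficulty: these constraints are first-order ODEs coupling the radial profiles of $u$, $v$ and $\omega$, and solving them generically destroys compact support; the paper sidesteps this entirely by proving the stronger statement that $\tilde\Delta_E$ is nonnegative on the non-TT sectors with no gauge condition at all. Until one of these routes is actually executed, the ``if'' direction remains unproven.
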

\begin{thm}\label{thmhyperboliccone}
Let $(M^n,g)$ be a compact Einstein metric of scalar curvature $n(n-1)$. Then the hyperbolic cone
\begin{align*}
(\widetilde{M},\tilde{g})=(\R_+\times M,dr^2+\sinh^2(r)g)
\end{align*}
is stable if and only if the smallest eigenvalue of the Einstein operator on $M$ restricted to $TT$-tensors satisfies the bound $\lambda\geq -\frac{(n-1)^2}{4}$. In this case, $(\widetilde{M},\tilde{g})$ is strictly stable.
\end{thm}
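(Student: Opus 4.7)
The plan is to adapt the separation-of-variables argument behind Theorem~\ref{thmricciflatcone} to the warping function $\sinh r$ in place of $r$, and then to exploit the extra positivity that the exponential growth of $\sinh$ provides at infinity in order to upgrade stability to strict stability.

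First, I would decompose a general symmetric $2$-tensor on $\widetilde M$ according to the warped product structure. Writing
\begin{equation*}
h = f\,dr^2 + 2\,dr\odot\omega + k,
\end{equation*}
with $f$ a scalar, $\omega(r,\cdot)$ an $r$-dependent $1$-form on $M$ and $k(r,\cdot)$ an $r$-dependent section of $S^2M$, and using the Hodge decomposition of $\omega$ together with the splitting $k=k_{TT}+L_Xg+\psi g$, every $TT$-tensor on $\widetilde M$ can be expanded into elementary blocks labelled by an eigentensor of $\Delta_E^M$ on $TT$-tensors, a coexact $1$-form eigenmode on $M$, or a scalar eigenmode on $M$. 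I would check that this decomposition is orthogonal for the natural $L^2$-pairing on $\widetilde M$ and that, after the constraints $\trace_{\tilde g}h=0$ and $\tilde\delta h=0$ have been solved, each block is $\Delta_E^{\tilde g}$-invariant.

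Second, on each block I would reduce $\Delta_E^{\tilde g}$ to a one-dimensional Sturm--Liouville operator on $\R_+$ acting on a single radial function $u(r)\in L^2(\sinh^n r\,dr)$. On the block responsible for the threshold in the theorem---tensors of the form $\phi(r)\,\sinh^2(r)\,k_{TT}$ where $k_{TT}$ is a $TT$-eigentensor of $M$ with $\Delta_E^M k_{TT}=\lambda k_{TT}$---a curvature computation using the warped product formulas, together with the identities $\sinh'=\cosh$, $\sinh''=\sinh$, $\cosh^2-\sinh^2=1$, yields an operator of the form
\begin{equation*}
L_\lambda u = -u'' - n\coth(r)\,u' + \left(\frac{\lambda}{\sinh^2 r} + \alpha(r)\right)u,
\end{equation*}
with $\alpha$ a bounded potential coming from the Ricci curvature of the cone. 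The scalar and coexact blocks reduce to operators whose nonnegativity is automatic from the Lichnerowicz--Obata estimate and its $1$-form analogue on a compact Einstein manifold of positive scalar curvature.

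Third, I would analyse the Friedrichs extension of $L_\lambda$ on $(0,\infty)$. The ground-state substitution $v=\sinh^{n/2}(r)\,u$ converts $L_\lambda$ into a Schrödinger operator on $L^2(\R_+,dr)$ of the form
\begin{equation*}
-v'' + \left(\frac{n^2}{4} + \frac{\lambda+n(n-2)/4}{\sinh^2 r} + \alpha(r)\right)v.
\end{equation*}
Near the cone tip $r=0$ one has $\sinh r\sim r$, and the classical Hardy inequality $\int(v')^2\,dr\geq\tfrac14\int v^2/r^2\,dr$ enforces the threshold $\lambda+n(n-2)/4\geq-\tfrac14$, equivalent to $\lambda\geq-\tfrac{(n-1)^2}{4}$, in agreement with Theorem~\ref{thmricciflatcone}. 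As $r\to\infty$ the potential tends to the strictly positive constant $n^2/4$, producing a uniform positive lower bound on the spectrum---precisely the feature absent from the flat-cone case and responsible for strict stability.

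The main obstacle I anticipate is Step~2: tracking how the $TT$ gauge conditions couple the radial coefficients $f,\omega,\psi,X$, and $\phi$, and verifying that each block reduces to a single scalar radial problem. The $\sinh$ warping produces cross-terms not present for the Ricci-flat cone, although the hyperbolic trigonometric identities keep the bookkeeping manageable. Once this reduction is in place, the indicial analysis at $r=0$ reproduces the Ricci-flat cone threshold, while the exponential asymptotics at $r=\infty$ supply the extra spectral gap required for strict stability.
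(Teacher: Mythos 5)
Your overall strategy (separation of variables along the warped product, reduction to radial Hardy-type inequalities, threshold at $r=0$ from $\sinh r\sim r$, spectral gap at $r=\infty$ from the exponential growth of $\sinh$) is the same as the paper's, and your treatment of the $TT$-block is correct and in one respect cleaner: the ground-state substitution $v=\sinh^{n/2}(r)\varphi$ turns the quadratic form $\int(\varphi')^2\sinh^n dr+\kappa\int\varphi^2\sinh^{n-2}dr$ into
\begin{equation*}
\int_0^\infty (v')^2\,dr+\frac{n^2}{4}\int_0^\infty v^2\,dr+\Bigl(\kappa+\tfrac{n(n-2)}{4}\Bigr)\int_0^\infty \frac{v^2}{\sinh^2 r}\,dr,
\end{equation*}
so the classical Hardy inequality together with $\sinh r\ge r$ gives both the threshold $\kappa\ge-\frac{(n-1)^2}{4}$ and the uniform gap $n^2/4$; the paper instead proves the two sharp weighted inequalities of Lemma \ref{infima_2} by the substitution $s=\sinh r$. (One small correction: on the $TT$-block your bounded potential $\alpha$ is actually zero, since the trace term in Lemma \ref{scalarproducts1} drops out; if $\alpha$ were a genuinely unknown bounded function it would threaten both the indicial threshold and the gap.)

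The genuine gap is your dismissal of the scalar and coexact blocks as ``automatic from Lichnerowicz--Obata.'' In the paper's decomposition these are the $2\times2$ block spanned by $\varphi f^2\delta^*\omega_i$ and $\psi\,dr\odot f\omega_i$, and the $3\times3$ block spanned by $\varphi f^2(n\nabla^2v_i+\Delta v_i\,g)$, $\psi\,dr\odot f\nabla v_i$ and $\chi\,v_i(f^2g-n\,dr\otimes dr)$, and they carry nonvanishing off-diagonal couplings (Lemma \ref{mixedscalarproducts2}) proportional to $\int\varphi\psi\cosh\sinh^{n-2}dr$. Obata's bounds $\lambda_i\ge n$, $\mu_i\ge n-1$ enter, but they do not by themselves dominate these cross terms: the paper needs parameter-tuned Young inequalities, and for $n\ge6$ a choice $\alpha=2$ works while for $n=4,5$ no single choice does and one must split the form into two matrices and check positive definiteness of each by hand. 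This is where most of the work (and the only dimension-dependent subtlety) lies, and strict stability must be verified with a uniform constant on these blocks as well, not only on the $TT$-block. Relatedly, your plan to first solve the gauge conditions $\trace_{\tilde g}h=0$, $\tilde\delta h=0$ and then diagonalize is an unnecessary complication: the divergence-free condition on $\widetilde M$ couples the radial coefficients across your blocks, so the claimed block-invariance after imposing the constraints is not clear. The paper sidesteps this entirely by proving positivity of $h\mapsto(\tilde\Delta_Eh,h)_{L^2}$ on \emph{all} of $C^\infty_{cs}(S^2\widetilde M)$, which is stronger and keeps the block structure clean.
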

Interestingly, the lower bound $-\frac{(n-1)^2}{4}$ also appears in physics as a stability condition for Freud-Rubin compactifications in AdS-CFT correspondence and for higher-dimensional black holes, which is known as the Breitenlohner-Freedman bound  (c.f.\ \cite{GHP03} and references therein).

In the proofs of these theorems, one direction is much easier to show than the other one. If the condition on the Einstein operator of the base manifold is not satisfied, we are able to construct a test section $\tilde{h}\in C^{\infty}_{cs}(S^2\widetilde{M})$ violating the stability condition. This direction was proven in \cite{HHS14} in the case of Ricci-flat cones and the converse direction was proven in the case of the base manifold $\CP^2$. However, to conclude stability of the warped product from the condition on the base manifold in the general case requires to carefully construct a suitable decomposition of the space of compactly supported symmetric $2$-tensors which is preserved by the Einstein operator. Additionally, one has to check the the operator is nonnegative on most subspaces of the decomposition, which requires a lot of tedious calculations.

In view of the dynamical stability and instability results under Ricci flow which were obtained so far, the stricly stable Einstein metrics in Theorem \ref{thmexponentialcone} and Theorem \ref{thmhyperboliccone} may be dynamically stable. On the other hand, Ricci flows coming out of cones have been constructed in \cite{FIK03,GK04,Sie13,SS13} and there is a conjecture by Ilmanen which relates instability of cones and nonuniqueness of the Ricci flow with conical initial data \cite{HHS14}.

This paper is organized as follows. In Section \ref{section2}, we construct a decomposition of the space $C^{\infty}_{cs}(S^2\widetilde{M})$ of a general warped product metric with respect to which the quadratic form $h\mapsto (\tilde{\Delta}_Eh,h)_{L^2}$ has a block diagonal form. In Section \ref{section3}, we use these formulas to prove the main theorems. In Section \ref{section4}, we discuss stability and instability of warped product metrics over certain classes of manifolds.

To finish the introduction, we fix some notation and conventions.
The Riemann curvature tensor is defined by the sign convention such that $R_{ijkl}=g(\nabla_{\partial_i}\nabla_{\partial_j}\partial_k-\nabla_{\partial_j}\nabla_{\partial_i}\partial_k,\partial_l)$. The Ricci curvature and the scalar curvature of a metric $g$ are denoted by $\ric_g,\scal_g$, respectively. The rough Laplacian acting on smooth sections of a vector bundle is $\Delta=\nabla^*\nabla=-g^{ij}\nabla^2_{ij}$. The symmetric tensor product is $h\odot k=h\otimes k+k\otimes h$. The divergence of a symmetric $2$-tensor and of a one-form are given by $\delta h_j=-g^{ij}\nabla_ih_{ij}$ and $\delta\omega=-g^{ij}\nabla_i\omega_j$, respectively. The formal adjoint $\delta^*:C^{\infty}(T^*M)\to C^{\infty}(S^2M)$ is $(\delta^*\omega)_{ij}=\frac{1}{2}(\nabla_i\omega_j+\nabla_j\omega_i)$. The space of smooth and compactly supported sections of a vector bundle $E$ is denoted by $C^{\infty}_{cs}(E)$.
\subsection*{Acknowledgements}
The author thanks Stuart Hall for helpful comments on the manuscript. Financial support is gratefully acknowledged to the SFB 1085, funded by the Deutsche Forschungsgemeinschaft.
\section{The Einstein operator on warped products}\label{section2}
In this section, we construct a suitable decomposition of the space of symmetric $2$-tensors on a manifold with a warped product metric. Let $\tilde{g}=dr^2+f(r)^2g$ be a warped product metric on $\widetilde{M}=I\times M$, where $I\subset\R$ is an open interval and let $\tilde{h}\in C_{cs}^{\infty}(S^2\widetilde{M})$.
Let $J\subset I$ and $\Omega\subset M$ be compact subsets with smooth boundaries such that $\supp(\tilde{h})\subset J\times \Omega$. Then $\tilde{h}$ can be decomposed as
\begin{align}\label{tensorexpansion1}
\tilde{h}=\sum_{i,j=1}^{\infty}a_{ij}\varphi_i v_j \cdot dr\otimes dr
+\sum_{i,j=1}^{\infty}b_{ij}\varphi_i \cdot dr\odot f\omega_j+\sum_{i,j=1}^{\infty}c_{ij}\varphi_i\cdot f^2h_j.
\end{align}
Here, $a_{ij},b_{ij},c_{ij}\in\R$ and $\varphi_i, v_i, \omega_i, h_i$ are smooth $L^2$-orthonormal bases of $L^2(J),L^2(\Omega),L^2(T^*\Omega)$ and $L^2(S^2\Omega)$, respectively, which vanish at the boundary. To avoid cumbersome notation, we drop the reference to the projection maps. This composition holds for products of closed manifolds, see \cite[Lemma 3.1]{AMo11} and the argument for compact manifolds with boundary is analogous.
For our purposes, it is more convenient to slightly modify this decomposition to 
\begin{align}\label{tensorexpansion2}
\tilde{h}=\sum_{i,j=1}^{\infty}a_{ij}\varphi_i v_j \tilde{g}
+\sum_{i,j=1}^{\infty}b_{ij}\varphi_i \cdot dr\odot f\omega_j+\sum_{i,j=1}^{\infty}c_{ij}\varphi_i\cdot f^2h_j,
\end{align}
because the Einstein operator preserves the conformal class of the metric, i.e. the first factor of \eqref{tensorexpansion2}, but not the first factor of \eqref{tensorexpansion1}.
Now we want to see how the Einstein operator acts as a quadratic form with respect to this decomposition. Later on, we will further decompose the tensors on $M$ in the last summand.

 From now on, we assume that the base metric and the warped product metric are both Einstein.  Before we start the actual computations, we recall that the nonvanishing Christoffel symbols of the warped product metric are 
\begin{align}
\tilde{\Gamma}_{ij}^k=\Gamma_{ij}^k,\qquad \tilde{\Gamma}_{ij}^0=-f'fg_{ij},\qquad \tilde{\Gamma}_{0i}^k=\tilde{\Gamma}_{i0}^k=\frac{f'}{f}\delta_i^k,
\end{align}
where $0$ denotes the $r$-coordinate and
latin indices denote coordinates on $M$. 
Here and in the following, objects depending on the metric are denoted with tilde if they depend on $\tilde{g}$ and without the tilde if they depend on $g$. 
Recall that the curvature tensors of $\tilde{g}$ and $g$ are related by
\begin{align}
\tilde{R}_{ijkl}=f^2(R_{ijkl}+(f')^2(g_{ik}g_{jl}-g_{il}g_{jk})),\qquad
\tilde{R}_{i00j}=-f''\cdot f g_{ij},
\end{align}
while the other coefficients of $\tilde{R}$ vanish.
\begin{lem}\label{scalarproducts1}Let $\varphi\in C_{cs}^{\infty}(I),v\in C_{cs}^{\infty}(M), \omega\in C_{cs}^{\infty}(T^{*}M),h\in C_{cs}^{\infty}(S^2M)$. 
Furthermore, let
\begin{align}
\tilde{h}_1=\varphi f^2h,\qquad \tilde{h}_2=\varphi dr\odot f\omega,\qquad \tilde{h}_3=\varphi v \tilde{g}.
\end{align}
Then,
\begin{equation}
\begin{aligned}
(\tilde{\Delta}_E\tilde{h}_1,\tilde{h}_1)_{L^2(\tilde{g})}&=
\int_{I}\varphi^2f^{n-2}dr\cdot[({\Delta_E}{h},{h})_{L^2({g})}+2\left\|\trace_gh\right\|^2_{L^2(g)}]\\
&\qquad+\int_{I}(\varphi')^2f^n dr\cdot \left\|h\right\|^2_{L^2(g)},\\
(\tilde{\Delta}_E\tilde{h}_2,\tilde{h}_2)_{L^2(\tilde{g})}&=2\int_{I}f^{n-2}\varphi^2dr\left\|\nabla \omega\right\|_{L^2(g)}^2
 +(2n+6)\int_{I}(f')^2f^{n-2}\varphi^2dr\left\|\omega\right\|_{L^2(g)}^2\\&
 \qquad+2\int_{I}(\varphi')^2f^{n}dr\left\|\omega\right\|_{L^2(g)}^2
 -4\int_{I}f''f^{n-1}\varphi^2dr\left\|\omega\right\|_{L^2(g)}^2,\\
 (\tilde{\Delta}_E\tilde{h}_3,\tilde{h}_3)_{L^2(\tilde{g})}&=
(n+1)\int_{I}(\varphi')^2f^{n}dr\left\|v\right\|^2_{L^2(g)}+
(n+1)\int_{I}\varphi^2f^{n-2}dr\left\|\nabla v\right\|_{L^2(g)}^2\\&\qquad-2\scal_{\tilde{g}}\int_{I}\varphi^2f^{n}dr\left\|v\right\|_{L^2(g)}^2.
\end{aligned}
\end{equation}
\end{lem}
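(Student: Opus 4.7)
The plan is to expand $(\tilde\Delta_E \tilde h_i,\tilde h_i)_{L^2(\tilde g)} = \|\tilde\nabla \tilde h_i\|_{L^2(\tilde g)}^2 - 2(\mathring{\tilde R}\tilde h_i,\tilde h_i)_{L^2(\tilde g)}$ by integration by parts on the rough Laplacian term, obtain pointwise formulas for the integrand in terms of data on $M$, and integrate using the warped volume form $dV_{\tilde g} = f^n\,dr\,dV_g$.

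First I would record the pointwise norms: since $\tilde g|_{TM} = f^2 g$, one-forms and symmetric $(0,2)$-tensors on $M$ satisfy $|f\omega|_{\tilde g}^2 = |\omega|_g^2$, $|f^2 h|_{\tilde g}^2 = |h|_g^2$ and $|dr\odot f\omega|_{\tilde g}^2 = 2|\omega|_g^2$, so that $\|\tilde h_i\|_{L^2(\tilde g)}^2$ already factorises as an $r$-integral against a base $L^2$-norm. Using the warped Christoffel symbols recalled just above the lemma, I would then expand $\tilde\nabla_0 \tilde h_i$ and $\tilde\nabla_k \tilde h_i$ in each of the three cases. A tangential derivative produces the corresponding base covariant derivative of $h$, $\omega$ or $v$ plus correction terms of order $f'/f$ from $\tilde\Gamma_{0i}^k$ and $\tilde\Gamma_{ij}^0$; an $r$-derivative produces a $\varphi'$-term plus algebraic terms from differentiating the warping factors $f,f^2$ and from the same Christoffel symbols. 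Squaring and integrating, many cross-terms vanish after one integration by parts in $r$ (using compact support), and what remains reorganises into $\|\nabla h\|_{L^2(g)}^2$ or $\|\nabla\omega\|_{L^2(g)}^2$, a trace term $\|\trace_g h\|_{L^2(g)}^2$ (for $\tilde h_1$, arising from the Christoffel trace $g^{ij}\tilde\Gamma_{ij}^0 = -nf'f$), and scalar-coefficient pieces involving $f,f',f''$ times $\|h\|^2$, $\|\omega\|^2$, or $\|v\|^2$.

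For the curvature term I would use $\tilde R_{ijkl} = f^2(R_{ijkl} + (f')^2(g_{ik}g_{jl}-g_{il}g_{jk}))$ and $\tilde R_{i00j} = -f''fg_{ij}$ recalled before the lemma. Contracting against $\tilde h_1$ reproduces $\mathring R h$ on the base plus constant-sectional-curvature corrections; for $\tilde h_2$ only the block $\tilde R_{i00j}$ and the $(f')^2$-piece of $\tilde R_{ijkl}$ contribute, giving pure $\|\omega\|^2$-terms; and for $\tilde h_3$, since $\tilde\nabla\tilde g=0$ one has $\tilde\nabla \tilde h_3 = d(\varphi v)\otimes\tilde g$ (symmetrised) and $\mathring{\tilde R}\tilde g = \widetilde{\ric} = \tfrac{\scal_{\tilde g}}{n+1}\tilde g$, which immediately yields the three stated terms. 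Combining with the $\|\tilde\nabla \tilde h_i\|^2$ from the previous step and grouping the $M$-integrals against the $r$-integrals produces the formulas in the lemma; the base-manifold Einstein operator $\Delta_E h = \nabla^*\nabla h - 2\mathring R h$ re-emerges naturally in the $\tilde h_1$ calculation.

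The main obstacle will be the systematic bookkeeping for $\tilde h_1$ and $\tilde h_2$: many cross-terms appear mixing $\varphi'$ with $f'/f$, and $(f')^2/f^2$ with base norms, and one has to integrate by parts in $r$ repeatedly and keep careful track of which combinations of $f'$, $f''$ and $(f')^2$ each term acquires, so that the $(f')^2$ contribution coming from $\mathring{\tilde R}$ via $\tilde R_{ijkl}$ cancels cleanly against the Christoffel cross-terms from $\tilde\Gamma_{0i}^k$ and leaves the precise coefficients $(2n+6)$, $-4$, etc.\ displayed in the statement. Verifying these exact coefficients is where the bulk of the tedious index arithmetic concentrates.
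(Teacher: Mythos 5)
Your proposal follows essentially the same route as the paper: compute the components of $\tilde{\nabla}\tilde{h}_i$ from the warped Christoffel symbols, add the curvature contributions from the two blocks of $\tilde{R}$, and integrate against $f^n\,dr\,dV_g$, with the $\tilde{h}_3$ case handled via $\tilde{\nabla}\tilde{g}=0$ and $\mathring{\tilde{R}}\tilde{g}=\tfrac{\scal_{\tilde{g}}}{n+1}\tilde{g}$ exactly as in the paper. The only slight misstatements are bookkeeping ones: in the paper's computation the $f'/f$-corrections cancel pointwise inside the covariant-derivative components, so no integration by parts in $r$ is actually needed, and the $\left\|\trace_gh\right\|^2$ term arises from the $(f')^2(g_{ik}g_{jl}-g_{il}g_{jk})$ block of $\mathring{\tilde{R}}$ rather than from the Christoffel trace --- neither point affects the validity of your plan.
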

\begin{proof}
Let $\tilde{h}\in C_{cs}^{\infty}(S^2\widetilde{M})$ be of the form $\tilde{h}=\varphi f^2 h$.
The we find that
\begin{align}\label{cov_der_1}
\tilde{\nabla}_i\tilde{h}_{jk}=\varphi f^2\nabla_ih_{jk},\qquad \tilde{\nabla}_0\tilde{h}_{ij}=\varphi 'f^2h_{ij},\qquad \tilde{\nabla}_i\tilde{h}_{j0}=\tilde{\nabla}_i\tilde{h}_{0j}=-\varphi f'f h_{ij},
\end{align}
while the other components vanish. This implies 
\begin{align}\label{cov_der_1.5}
|\tilde{\nabla}\tilde{h}|^2_{\tilde{g}}=\varphi^2\frac{1}{f^2}|\nabla h|^2_g+(\varphi')^2|h|_g+2\frac{\varphi^2}{f^2}(f')^2|h|^2_{g}.
\end{align}
The actions of the curvature tensors are related by
\begin{align}
\langle \mathring{\tilde{R}}\tilde{h},\tilde{h}\rangle_{\tilde{g}}=\frac{\varphi^2}{f^2}[\langle\mathring{R}h,h\rangle_g+(f')^2|h|^2_{g}-(f')^2(\trace_g h)^2].
\end{align}
This yields
\begin{equation}\begin{split}
(\tilde{\Delta}_E\tilde{h},\tilde{h})_{L^2(\tilde{g})}=&
\int_{I}\varphi^2f^{n-2}dr\cdot[({\Delta_E}{h},{h})_{L^2({g})}+2\left\|\trace_gh\right\|^2_{L^2(g)}]\\
&+\int_{I}(\varphi')^2f^n dr\cdot \left\|h\right\|^2_{L^2(g)}.
\end{split}
\end{equation}
Next, let $\tilde{h}\in C_{cs}^{\infty}(S^2\widetilde{M})$ be $\tilde{h}=\varphi dr\odot f\omega$.
Straightforward calculations show that
\begin{equation}
\begin{aligned}\label{cov_der_2}
\tilde{\nabla}_i\tilde{h}_{jk}&=\varphi f'f^2(\omega_jg_{ik}+\omega_kg_{ij}),
&\tilde{\nabla}_i\tilde{h}_{j0}&=\tilde{\nabla}_i\tilde{h}_{0j}=f\nabla_i\omega_j\varphi,\\
\tilde{\nabla}_i\tilde{h}_{00}&=-2f'\omega_i\varphi,
&\tilde{\nabla}_0\tilde{h}_{i0}&=\tilde{\nabla}_0\tilde{h}_{0i}=f\varphi'\omega_i,
\end{aligned}
\end{equation}
while the other components vanish. This implies
  \begin{align*}
 |\tilde{\nabla}\tilde{h}|_{\tilde{g}}^2&
=2f^{-2}\varphi^2|\nabla \omega|_{g}^2+[2(n+1)+4](f')^2f^{-2}\varphi^2|\omega|_g+2(\varphi')^2|\omega|^2_g.
 \end{align*}
Moreover,
\begin{align}
\langle \mathring{\tilde{R}}\tilde{h},\tilde{h}\rangle_{\tilde{g}}=2f''f^{-1}\varphi^2|\omega|_{g}^2.
\end{align}
 Integration yields
\begin{equation}
\begin{aligned}
(\tilde{\Delta}_E\tilde{h},\tilde{h})_{L^2(\tilde{g})}&=2\int_{I}f^{n-2}\varphi^2dr\left\|\nabla \omega\right\|_{L^2(g)}^2
 +(2n+6)\int_{I}(f')^2f^{n-2}\varphi^2dr\left\|\omega\right\|_{L^2(g)}^2\\& \qquad
 +2\int_{I}(\varphi')^2f^{n}dr\left\|\omega\right\|_{L^2(g)}^2
 -4\int_{I}f''f^{n-1}\varphi^2dr\left\|\omega\right\|_{L^2(g)}^2.
\end{aligned}
\end{equation}
Finally, let $\tilde{h}=\varphi v\cdot \tilde{g}$. Then,
\begin{align}
|\tilde{\nabla}\tilde{h}|^2_{\tilde{g}}=(n+1)[(\varphi')^2v^2+f^{-2}\varphi^2|\nabla v|^2_g],
\end{align}
while
\begin{align}
\langle \mathring{\tilde{R}}\tilde{h},\tilde{h}\rangle_{\tilde{g}}=\varphi^2 v^2\scal_{\tilde{g}}.
\end{align}
Thus,
\begin{equation}
\begin{aligned}
(\tilde{\Delta}_E\tilde{h},\tilde{h})_{L^2(\tilde{g})}&=
(n+1)\int_{I}(\varphi')^2f^{n}dr\left\|v\right\|^2_{L^2(g)}+
(n+1)\int_{I}\varphi^2f^{n-2}dr\left\|\nabla v\right\|_{L^2(g)}^2\\&\qquad-2\scal_{\tilde{g}}\int_{I}\varphi^2f^{n}dr\left\|v\right\|_{L^2(g)}^2.
\end{aligned}
\qedhere
\end{equation}
\end{proof}
\begin{lem}\label{mixedscalarproducts1}
Let $\varphi,\psi,\chi\in C_{cs}^{\infty}(I),v\in C_{cs}^{\infty}(M), \omega\in C_{cs}^{\infty}(T^*M),h\in C_{cs}^{\infty}(S^2M)$. 
Furthermore, let
\begin{align}
\tilde{h}_1=\varphi f^2h,\qquad \tilde{h}_2=\psi dr\odot f\omega,\qquad \tilde{h}_3=\chi v \tilde{g}.
\end{align}
Then,
\begin{equation}
\begin{aligned}
(\tilde{\Delta}_E\tilde{h}_1,\tilde{h}_2)_{L^2(\tilde{g})}&=-4
\int_{I}\varphi\psi f'f^{n-2}dr\cdot(\delta h,\omega)_{L^2({g})},\\
(\tilde{\Delta}_E\tilde{h}_1,\tilde{h}_3)_{L^2(\tilde{g})}&=
\int_{I}\varphi\psi f'f^{n-2}dr\cdot[(\nabla \trace_gh,\nabla v)_{L^2({g})}-2\frac{\scal_{\tilde{g}}}{n+1}( \trace_gh,v)_{L^2({g})}]\\
&\qquad+\int_{I}\varphi'\psi' f^{n}dr\cdot(\trace_gh, v)_{L^2({g})},\\
(\tilde{\Delta}_E\tilde{h}_2,\tilde{h}_3)_{L^2(\tilde{g})}&=0.
\end{aligned}
\end{equation}
\end{lem}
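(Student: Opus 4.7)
My plan is to follow exactly the pattern used in the proof of Lemma \ref{scalarproducts1}. I decompose
\[
(\tilde{\Delta}_E\tilde{h}_a, \tilde{h}_b)_{L^2(\tilde{g})} = (\tilde{\nabla}\tilde{h}_a, \tilde{\nabla}\tilde{h}_b)_{L^2(\tilde{g})} - 2(\mathring{\tilde{R}}\tilde{h}_a, \tilde{h}_b)_{L^2(\tilde{g})},
\]
reuse the component-wise formulas for $\tilde{\nabla}\tilde{h}_1, \tilde{\nabla}\tilde{h}_2$ already derived in the proof of Lemma \ref{scalarproducts1}, and add the analogous formula for $\tilde{h}_3 = \chi v\tilde{g}$, which is immediate from $\tilde{\nabla}\tilde{g} = 0$: the only nonvanishing entries are $\tilde{\nabla}_i\tilde{h}_{3,bc} = \chi(\nabla_i v)\tilde{g}_{bc}$ and $\tilde{\nabla}_0\tilde{h}_{3,bc} = \chi' v\,\tilde{g}_{bc}$. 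The pairwise contractions use the block form $\tilde{g}^{00}=1$, $\tilde{g}^{ij}=f^{-2}g^{ij}$, $\tilde{g}^{0i}=0$; on $M$ the identity $\int_M h^{ij}\nabla_i\omega_j\,dV_g = (\delta h,\omega)_{L^2(g)}$ (one integration by parts, compactly supported) produces the divergences; on $I$ the factor $f^n$ from $dV_{\tilde{g}} = f^n\,dr\wedge dV_g$ accounts for the $f$-weights.

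For $(\tilde{\Delta}_E\tilde{h}_1, \tilde{h}_2)$ only the triples $(i,j,k)$ and the mixed triples $(i,j,0), (i,0,j)$ yield matching nonzero components. The all-spatial triple contracts $\nabla_i h_{jk}$ against the pure-metric piece $\psi f' f^2(g_{ij}\omega_k + g_{ik}\omega_j)$ of $\tilde{\nabla}\tilde{h}_2$ and, after using $\nabla^j h_{jk} = -\delta h_k$, produces $-2\langle\delta h,\omega\rangle_g$ pointwise; each mixed triple yields $h^{ij}\nabla_i\omega_j$ which equals $\langle\delta h,\omega\rangle_g$ after integration by parts on $M$. The curvature term $\langle\mathring{\tilde{R}}\tilde{h}_1,\tilde{h}_2\rangle$ vanishes pointwise: the only nonvanishing components of $\tilde{R}$ listed before Lemma \ref{scalarproducts1} are $\tilde{R}_{ijkl}$ and $\tilde{R}_{i00j}$, while pairing a purely spatial $\tilde{h}_1^{cd}$ with a mixed $\tilde{h}_2^{ab}$ would demand a curvature component with exactly one $0$ index. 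For $(\tilde{\Delta}_E\tilde{h}_1, \tilde{h}_3)$ the matching triples are $(i,j,k)$ — the metric factor in $\tilde{\nabla}\tilde{h}_3$ turns $g^{jk}\nabla_i h_{jk}$ into $\nabla_i\trace_g h$, giving $(\nabla\trace_g h,\nabla v)_g$ — and $(0,i,j)$, pairing $\varphi' f^2 h_{ij}$ with $\chi' v f^2 g_{ij}$ to give $\varphi'\chi' v\trace_g h$. The curvature part is handled by moving $\mathring{\tilde{R}}$ onto $\tilde{h}_3$ via self-adjointness and using $\mathring{\tilde{R}}\tilde{g} = \tilde{\ric} = \tfrac{\scal_{\tilde{g}}}{n+1}\tilde{g}$, whose sign convention is already implicit in the identity $\langle\mathring{\tilde{R}}\tilde{h}_3,\tilde{h}_3\rangle = \varphi^2 v^2\scal_{\tilde{g}}$ from Lemma \ref{scalarproducts1}; this produces the $-\tfrac{2\scal_{\tilde{g}}}{n+1}(\trace_g h, v)_g$ contribution.

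For the third identity $(\tilde{\Delta}_E\tilde{h}_2, \tilde{h}_3) = 0$ the curvature part vanishes trivially because $\trace_{\tilde{g}}\tilde{h}_2 = 0$ (the tensor $dr\odot f\omega$ is purely off-diagonal in the $r$-/spatial split). The gradient part has two competing contributions: the triple $(i,j,k)$ produces $+2f^{-2}\psi\chi f'\langle\omega,\nabla v\rangle_g$ after contracting $\psi f'f^2(g_{ij}\omega_k + g_{ik}\omega_j)$ with $\chi f^2(\nabla_{i'}v)g_{j'k'}$, while $(i,0,0)$ produces $-2f^{-2}\psi\chi f'\langle\omega,\nabla v\rangle_g$ coming from $\tilde{\nabla}_i\tilde{h}_{2,00} = -2\psi f'\omega_i$ paired with $\tilde{\nabla}_i\tilde{h}_{3,00} = \chi\nabla_i v$. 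These cancel exactly and pointwise. I expect the main obstacle to be purely combinatorial bookkeeping: enumerating, for each pair $(\tilde{h}_\alpha,\tilde{h}_\beta)$, the index triples for which both covariant derivatives are nonzero, and tracking the correct powers of $f$ and $f'$ arising from the three factors of $\tilde{g}^{-1}$ together with the volume form. The sign-matching cancellation in the third identity is the only step that demands genuine care beyond routine contraction.
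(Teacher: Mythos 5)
Your argument is correct and essentially identical to the paper's: both compute $\langle\tilde{\nabla}\tilde{h}_a,\tilde{\nabla}\tilde{h}_b\rangle_{\tilde{g}}$ componentwise from the covariant-derivative formulas \eqref{cov_der_1} and \eqref{cov_der_2} recorded in the proof of Lemma \ref{scalarproducts1}, observe that the curvature pairing vanishes for $(\tilde{h}_1,\tilde{h}_2)$ (no component of $\tilde{R}$ has exactly one $0$-index) and reduces to $\mathring{\tilde{R}}\tilde{g}=\tilde{\ric}=\tfrac{\scal_{\tilde{g}}}{n+1}\tilde{g}$ for the pairings involving $\tilde{h}_3$, and then integrate by parts on $M$ to turn $\langle h,\delta^*\omega\rangle_g$ into $\langle\delta h,\omega\rangle_g$ --- just make sure in the first identity that the mixed triples, which carry the factor $-\varphi f'f\,h_{ij}$ from \eqref{cov_der_1}, contribute with the \emph{same} sign as the all-spatial triple, since that is what produces the coefficient $-4$ rather than a cancellation. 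The one genuine (but minor) difference is the third identity: the paper argues structurally that $\tilde{\Delta}_E(\chi v\tilde{g})$ remains a multiple of $\tilde{g}$ while $\trace_{\tilde{g}}\tilde{h}_2=0$, whereas you verify the pointwise cancellation of the $(i,j,k)$ and $(i,0,0)$ contributions directly; both routes are valid and your explicit cancellation checks out.
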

\begin{proof}
Let $\tilde{h}_1,\tilde{h}_2$ be as above. Using \eqref{cov_der_1} and \eqref{cov_der_2}, we see that
\begin{align*}
\langle\tilde{\nabla}\tilde{h}_1,\tilde{\nabla}\tilde{h}_2\rangle_{\tilde{g}}=
-2f^{-2}f'\varphi\psi[\langle\delta h,\omega\rangle_g+\langle h,\delta^*\omega\rangle_g].
\end{align*}
and the curvature term vanishes. The first formula follows from integration over $\widetilde{M}$ and integration by parts on $M$.
For the second formula, we compute
\begin{align}
\langle\tilde{\nabla}\tilde{h}_1,\tilde{\nabla}\tilde{h}_3\rangle_{\tilde{g}}=
\varphi\psi f^{-2}\langle\nabla\trace_g h,\nabla v\rangle_g+\varphi'\psi' v\trace_gh,
\end{align}
where we used \eqref{cov_der_2} again and
\begin{align}
\langle\mathring{\tilde{R}}\tilde{h}_3,\tilde{h}_1\rangle_{\tilde{g}}=\frac{\scal_{\tilde{g}}}{n+1}\varphi\chi f^{-2}v\trace_gh.
\end{align}
The third formula follows from the fact that $\tilde{\Delta}_E(\psi v\tilde{g})=(\tilde{\Delta}_g(\psi v) -2\frac{\scal_{\tilde{g}}}{n+1}\psi v)\tilde{g}$, 
and that $\trace_{\tilde{g}}\tilde{h}_2=0$.
\end{proof}
Now we are going to refine the above decomposition. Recall that on any compact Einstein manifold $(M^n,g)$ except the sphere, we have
\begin{align}\label{bessesplitting}
C^{\infty}(S^2M)=C^{\infty}(M)\cdot g\oplus\delta^*(C^{\infty}(T^*M))\oplus \trace_g^{-1}(0)\cap\delta_g^{-1}(0),
\end{align}
see \cite[Lemma 4.57]{Bes08}. Recall that $C^{\infty}(T^*M)$ splits orthogonally to $d(C^{\infty}(M))$ and $C^{\infty}(T^*M)\cap\delta^{-1}(0)$ and that this $L^2$-orthogonality is preserved by the map $\delta^{*}:C^{\infty}(T^*M)\to C^{\infty}(S^2M)$.
Therefore, \eqref{bessesplitting} can be more refined to the $L^2$-orghogonal splitting
\begin{align}\label{tensorsplitting}
C^{\infty}(S^2M)=C^{\infty}(M)\cdot g\oplus W_1\oplus W_2\oplus TT_{g},
\end{align}
where
\begin{equation}
\begin{aligned}
W_1:=&\left\{n\nabla^2 v+\Delta v\cdot g\mid v\in C^{\infty}(M)\right\},\\
W_2:=&\left\{\delta^*\omega\mid \omega\in C^{\infty}(T^*M),\delta\omega=0\right\},
\end{aligned}
\end{equation}
and $TT_{g}=\trace_g^{-1}(0)\cap\delta_g^{-1}(0)$.
 Note that $W_2=\delta^*(C^{\infty}(T^*M)\cap\delta^{-1}(0))=$ by definition but $W_1$ is the projection of $\delta^*(d C^{\infty}(M))$ to $\trace^{-1}(0)$ in order to ensure orthogonality of $W_1$ to all other factors. It is not hard to see that this splitting also holds for compactly supported tensors, if $M$ is noncompact.
 It is preserved by the Einstein operator because we have the formulas
 \begin{equation}\label{commutators}
 \begin{aligned}
 \Delta_E(vg)&=\left(\Delta v-2\frac{\scal}{n}v\right)g,\\
 \Delta_E(\nabla^2v)&=\nabla^2\left(\Delta v-2\frac{\scal}{n}v\right),\\
 \Delta_E(\delta^*\omega)&=\delta^*\left(\Delta_H\omega-2\frac{\scal}{n}\omega\right).
  \end{aligned}
  \end{equation}
where $\Delta_H$ is the Hodge Laplacian, see e.g. \cite[Lemma 4.2]{Kro15}.
 Combining \eqref{tensorexpansion2} and \eqref{tensorsplitting}, we can expand any tensor $\tilde{h}\in C_{sc}^{\infty}(S^2\widetilde{M})$ with support contained in $J\times\Omega$ as
\begin{equation}\label{tensorexpansion3}
\begin{aligned}
\tilde{h}=&
\sum_{i,j=1}^{\infty}a_{ij}\varphi_i f^2 h_j
+\sum_{i,j=1}^{\infty}b^{(1)}_{ij}\varphi_i f^2 \delta^*\omega_j
+\sum_{i,j=1}^{\infty}b^{(2)}_{ij}\varphi_i \cdot dr\odot f\omega_j\\
&+\sum_{i,j=1}^{\infty}c^{(1)}_{ij}\varphi_i v_j \tilde{g}
+\sum_{i,j=1}^{\infty}c^{(2)}_{ij}\varphi_i f^2(n\nabla^2 v_j+\Delta v_j\cdot g)\\
&+\sum_{i,j=1}^{\infty}c^{(3)}_{ij}\varphi_i \cdot dr\odot \nabla v_j
+\sum_{i,j=1}^{\infty}c^{(4)}_{ij}\varphi_i\cdot v_j(f^2g-ndr\otimes dr),
\end{aligned}
\end{equation}
where $a_{ij},b^{(k)}_{ij},c^{(k)}_{ij}\in\R$ and $\varphi_i,v_i,\omega_i, h_i$ are smooth orthonormal bases of $L^2(J),L^2(\Omega),L^2 (W_2(\Omega))$ and $L^2(TT_g(\Omega))$ vanishing at the boundary. Let $\lambda_1\leq\lambda_2\leq\ldots$ be the Dirichlet eigenvalues of the Laplacian on $C^{\infty}(\Omega)$, $\mu_1\leq\mu_2\leq\ldots$ be the dirichlet eigenvalues of the connection Laplacian on $W_2(\Omega)$ and $\kappa_1\leq\kappa_2\leq\ldots$ be the dirichlet eigenvalues of the Einstein operator on $TT_g(\Omega)$. We then may choose the orthonormal bases such that $\Delta v_i=\lambda_i v_i$, $\Delta \omega_i=\mu_i\omega_i$ and $\Delta_E h_i=\kappa_i h_i$.
\begin{rem}
Let us explain why we choose exactly the expansion \eqref{tensorexpansion3}. From 
  \eqref{tensorexpansion2} and \eqref{tensorsplitting}, it is clear that we have $7$ different types of symmetric $2$-tensors on the warped product and each of the types contains either $v_i$, $\omega_i$ or $h_i$.
  Therefore in order to prove positivity of the Einstein operator, we basically have to prove positivity of a $7\times 7$-matrix coming from this decomposition. For this purpose, it is convenient to force as many off-diagonal terms to be zero as possible.

It is intuitively clear that the tensors generated by $v_i$, $\omega_i$ or $h_i$ should not interact with each other. Therefore, the $7\times 7$-matrix splits to a $1\times 1$-block, corresponding to the tensors containing the $h_i$, a $2\times 2$-block, corresponding to the two types of tensors containing the $\omega_i$ and a $4\times 4$-block corresponding to tensors containing the $v_i$. The $1\times 1$ and the $2\times 2$ block are not too hard to analyse.

The $4\times4$-block is most difficult to analyse and therefore we need a clever $L^2$-orthogonal decomposition of the types of tensors containing the $v_i$. From \eqref{tensorexpansion1} and \eqref{bessesplitting}, the first guess would have been to use the expansion
\begin{equation}\label{tensorexpansion4}
\begin{aligned}
\sum_{i,j=1}^{\infty}c^{(1)}_{ij}\varphi_i v_j dr\otimes dr
+\sum_{i,j=1}^{\infty}c^{(2)}_{ij}\varphi_i f^2\nabla^2 v_j
+\sum_{i,j=1}^{\infty}c^{(3)}_{ij}\varphi_i \cdot dr\odot \nabla v_j
+\sum_{i,j=1}^{\infty}c^{(4)}_{ij}\varphi_i\cdot v_jf^2g,
\end{aligned}
\end{equation}
but it turns out to be quite inconvenient because all coefficents of the $4\times4$-block are nonvanishing in this case. Let us now go back to \eqref{tensorexpansion3}.
 There, the fourth sum represents the conformal class of $\tilde{g}$, while the terms in the fifth and seventh sum are sums of two terms in order to ensure that these tensors are trace-free (with respect to $g$ as well as to $\tilde{g}$). The terms in the sixth summand are also trace-free. The Einstein operator preserves the conformal class and the space of trace-free tensors and therefore, $4\times 4$-matrix splits to a $1\times 1$ and a $3\times 3$-block.

Luckily, it turns out in Lemma \ref{mixedscalarproducts2} below that the terms in the fifth summand and the terms in the last summand also do not interact with each other unter the Einstein operator, which means that the $3\times3$-matrix has just $2$ nonzero off-diagonal terms.
\end{rem}

\begin{lem}\label{commutations}Let $(M^n,g)$ be an Einstein manifold. Then for $v\in C^{\infty}_{cs}(M)$ and $\omega\in W_2$, we have
\begin{align}
\delta^*\delta\omega&=\frac{1}{2}\nabla^*\nabla\omega-\frac{\scal}{2n}\omega,\\
(\nabla^2)^*\nabla^2v&=\delta\delta\nabla^2v=\Delta\left(\Delta-\frac{\scal}{n}\right)v.
\end{align}
\end{lem}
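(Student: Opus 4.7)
\medskip

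\textbf{Proof plan.} Both identities are Weitzenb\"ock-type formulas on an Einstein manifold. The common engine is the Bochner formula for one-forms,
\begin{equation*}
\Delta_H\eta=\nabla^*\nabla\eta+\ric(\eta)=\nabla^*\nabla\eta+\frac{\scal}{n}\eta,
\end{equation*}
whose right-hand side simplifies by virtue of the Einstein condition.

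For the first identity, I would start by deriving the classical Lichnerowicz formula
\begin{equation*}
\delta\delta^*\eta=\tfrac12\nabla^*\nabla\eta-\tfrac{\scal}{2n}\eta
\end{equation*}
for a co-closed one-form $\eta$, via the pointwise splitting $\nabla_i\eta_j=(\delta^*\eta)_{ij}+\tfrac12(d\eta)_{ij}$ of the covariant derivative into its symmetric and skew parts. Taking the divergence turns the left side into $\nabla^*\nabla\eta$ and the right side into $\delta\delta^*\eta+\tfrac12\delta d\eta$; using $\delta\eta=0$ to identify $\delta d\eta=\Delta_H\eta$ and substituting Bochner then produces the displayed formula after rearrangement. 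Since by definition $W_2$ is the image under $\delta^*$ of the space of co-closed one-forms, the stated identity is precisely this Lichnerowicz formula lifted to the $W_2$-level: writing $\omega=\delta^*\eta$ and applying $\delta^*$ to the one-form identity for $\delta\omega=\delta\delta^*\eta$, the two terms recombine into $\tfrac12\nabla^*\nabla\omega-\tfrac{\scal}{2n}\omega$ once the commutator of $\nabla^*\nabla$ with $\delta^*$ is absorbed using the rule \eqref{commutators} for $\Delta_E$ acting through $\delta^*$, which on an Einstein background exactly compensates the $2\mathring R\,\omega$ term coming from $\Delta_E=\nabla^*\nabla-2\mathring R$.

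For the second identity, the first equality $(\nabla^2)^*\nabla^2v=\delta\delta\nabla^2v$ is a double integration by parts: the formal $L^2$-adjoint of $\nabla^2\colon C^\infty(M)\to C^\infty(S^2M)$ is $\delta\delta$. For the second equality, symmetry of the Hessian gives
\begin{equation*}
\delta(\nabla^2v)_j=-\nabla^i\nabla_i\nabla_jv=\nabla^*\nabla(dv)_j,
\end{equation*}
and applying Bochner to the exact one-form $dv$, together with $\Delta_H(dv)=d\Delta v$ (valid on any Riemannian manifold since $\Delta_H$ commutes with $d$ on functions), yields $\delta(\nabla^2v)=d\Delta v-\tfrac{\scal}{n}dv$. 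A final application of $\delta$ and the identity $\delta d=\Delta$ on functions then delivers $\delta\delta(\nabla^2v)=\Delta(\Delta v)-\tfrac{\scal}{n}\Delta v=\Delta\bigl(\Delta-\tfrac{\scal}{n}\bigr)v$.

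The main source of delicacy is the careful book-keeping of sign conventions throughout — the positive rough Laplacian $\Delta=-g^{ij}\nabla_i\nabla_j$, the divergence $\delta$ with its leading minus, and the direction of the Ricci term in Bochner — so that every coefficient, and in particular the precise sign of the $\scal/n$-contribution, lands correctly in both identities.
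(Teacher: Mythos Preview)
Your treatment of the second identity is correct and matches the paper's proof line for line: $\delta(\nabla^2 v)=\nabla^*\nabla(dv)$, then Bochner plus $\Delta_H d=d\Delta$ give $d(\Delta-\tfrac{\scal}{n})v$, and a final $\delta$ yields the result.

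For the first identity there is a wrinkle worth flagging. The paper's phrasing ``$\omega\in W_2$'' together with the order $\delta^*\delta$ is misleading; if you look at how the lemma is actually invoked (in the proof of Lemma~\ref{scalarproducts2} and Lemma~\ref{norms}), what is used is
\[
\delta\delta^*\omega=\tfrac12\nabla^*\nabla\omega-\tfrac{\scal}{2n}\omega
\]
for a \emph{co-closed one-form} $\omega$. Your derivation of exactly this one-form identity via the splitting $\nabla\eta=\delta^*\eta+\tfrac12 d\eta$ and Bochner is correct and complete; the paper simply cites \cite[Lemma~4.4]{Kro15} for it.

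Your subsequent ``lifting'' step, where you read $\omega=\delta^*\eta\in W_2$ literally as a symmetric $2$-tensor and try to prove $\delta^*\delta\omega=\tfrac12\nabla^*\nabla\omega-\tfrac{\scal}{2n}\omega$ at that level, does not go through. If you actually carry out the commutator computation you sketch --- apply $\delta^*$ to the one-form identity and substitute $\delta^*\nabla^*\nabla\eta$ using $\Delta_E\delta^*=\delta^*(\Delta_H-\tfrac{2\scal}{n})$ and Bochner --- you obtain
\[
\delta^*\delta\omega=\tfrac12\nabla^*\nabla\omega-\mathring{R}\omega=\tfrac12\Delta_E\omega,
\]
not $\tfrac12\nabla^*\nabla\omega-\tfrac{\scal}{2n}\omega$. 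The two agree only if $\mathring{R}\omega=\tfrac{\scal}{2n}\omega$, which is not true in general. So the $2\mathring R$ term does \emph{not} get compensated away as you assert. Drop the lifting paragraph; your one-form computation already delivers what the paper needs.
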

\begin{proof}
The first formula follows from \cite[Lemma 4.4]{Kro15}. For the second formula, we compute
\begin{align}
\delta\delta\nabla^2v=\delta (\nabla^*\nabla)\nabla v=\delta \nabla\left(\Delta-\frac{\scal}{n}\right)v=\Delta\left(\Delta-\frac{\scal}{n}\right)v.
\end{align}
Here, the second equality holds due to of the well-known formula $\Delta_H\nabla v=\nabla\Delta v$.
\end{proof}
\begin{lem}\label{norms}Let $\varphi\in C^{\infty}_{cs}(I)$ with support contained in $J$ and $v_i, \omega_i, h_i$ as above.
		Let
		\begin{equation}\label{tensors}
			\begin{aligned}
				\tilde{h}_{1,i}^{(1)}=\varphi f^2\cdot h_i,\quad 
				\tilde{h}_{2,i}^{(1)}=\varphi v_i \tilde{g},\quad 
				\tilde{h}_{3,i}^{(1)}=\varphi f^2\delta^*\omega_i,\quad
				\tilde{h}_{3,i}^{(2)}=\varphi \cdot dr\odot f\omega_i,\\
				\tilde{h}_{4,i}^{(1)}=\varphi f^2(n\nabla^2 v_i+\Delta v_i\cdot g),\quad
				\tilde{h}_{4,i}^{(2)}=\varphi \cdot dr\odot f \nabla v_i,\quad
				\tilde{h}_{4,i}^{(3)}=\varphi\cdot v_i(f^2g-ndr\otimes dr).
			\end{aligned}
		\end{equation}
	The $L^2$-norms of these tensors are
	\begin{equation}
		\begin{aligned}
			\left\|\tilde{h}_{1,i}^{(1)}\right\|_{L^2(\tilde{g})}^2&=\int_J\varphi^2f^ndr,
			&\left\|\tilde{h}_{2,i}^{(1)}\right\|_{L^2(\tilde{g})}^2&=(n+1)\int_J\varphi^2f^ndr,\\
			\left\|\tilde{h}_{3,i}^{(1)}\right\|_{L^2(\tilde{g})}^2&=\frac{1}{2}\left(\mu_i-\frac{\scal_g}{n}\right)\int_J\varphi^2f^ndr, 
			&\left\|\tilde{h}_{3,i}^{(2)}\right\|_{L^2(\tilde{g})}^2&=2\int_J\varphi^2f^ndr,\\
			\left\|\tilde{h}_{4,i}^{(1)}\right\|_{L^2(\tilde{g})}^2&= n\lambda_i[(n-1)\lambda_i-\scal_g]\int_J\varphi^2f^ndr,
			&\left\|\tilde{h}_{4,i}^{(2)}\right\|_{L^2(\tilde{g})}^2&=2\lambda_i\int_J\varphi^2f^ndr,\\
			\left\|\tilde{h}_{4,i}^{(3)}\right\|_{L^2(\tilde{g})}^2&=(n+1)n\int_J\varphi^2f^ndr,
		\end{aligned}
	\end{equation}
	and if $\delta_{il}\cdot\delta_{jm}\cdot\delta_{kn}=0$, $$(\tilde{h}_{i,j}^{(k)},\tilde{h}_{l,m}^{(n)})_{L^2(\tilde{g})}=0.$$
\end{lem}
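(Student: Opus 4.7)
The plan is to handle the two parts of the statement (norms and cross-orthogonalities) separately, both by reducing to pointwise computations on the warped product $(\widetilde M,\tilde g)=(I\times M,\,dr^2+f^2g)$ followed by integration via Fubini. The structural ingredients I would invoke throughout are the volume form $dV_{\tilde g}=f^n\,dr\wedge dV_g$; the fiber norm scalings $|\alpha|^2_{\tilde g}=f^{-2}|\alpha|^2_g$ for $\alpha\in T^*M$ and $|k|^2_{\tilde g}=f^{-4}|k|^2_g$ for $k\in S^2M$; the elementary identities $|\tilde g|^2_{\tilde g}=n+1$, $|dr\otimes dr|^2_{\tilde g}=1$, and $|dr\odot f\alpha|^2_{\tilde g}=2|\alpha|^2_g$; and the pointwise orthogonal splitting
\[
S^2\widetilde M \;=\; \R\cdot dr\otimes dr \;\oplus\; (dr\odot T^*M) \;\oplus\; S^2M.
\]
On the $S^2M$-summand I would further use the $L^2$-orthogonal decomposition \eqref{tensorsplitting}.

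For the norms, five of the seven cases ($\tilde h_{1,i}^{(1)}$, $\tilde h_{2,i}^{(1)}$, $\tilde h_{3,i}^{(2)}$, $\tilde h_{4,i}^{(2)}$, $\tilde h_{4,i}^{(3)}$) are immediate from these ingredients and the assumed $L^2$-normalization of $v_j,\omega_j,h_j$; in particular $|f^2g-n\,dr\otimes dr|^2_{\tilde g}=n+n^2$ because the two summands live in orthogonal summands of the displayed splitting and scale to $n$ and $n^2$ respectively. The two genuinely non-elementary norm computations are those of $\tilde h_{3,i}^{(1)}=\varphi f^2\delta^*\omega_i$ and $\tilde h_{4,i}^{(1)}=\varphi f^2(n\nabla^2 v_i+\Delta v_i\cdot g)$. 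For the former I would integrate by parts to get $\|\delta^*\omega_i\|^2_{L^2(g)}=\langle\delta\delta^*\omega_i,\omega_i\rangle$ and apply Lemma \ref{commutations} to conclude $\tfrac{1}{2}(\mu_i-\scal_g/n)$. For the latter I would expand the square using $\trace_g\nabla^2 v_i=-\Delta v_i$ and $|g|^2=n$ to reduce to $n^2\|\nabla^2 v_i\|^2_{L^2(g)}-n\lambda_i^2$, then invoke Lemma \ref{commutations} in the form $\|\nabla^2 v_i\|^2=\langle v_i,\Delta(\Delta-\scal_g/n)v_i\rangle=\lambda_i(\lambda_i-\scal_g/n)$, giving the stated $n\lambda_i[(n-1)\lambda_i-\scal_g]$.

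For the orthogonality claim I would organize the cases along the pointwise splitting. Tensors whose pointwise supports lie in different summands of the displayed decomposition pair to zero trivially; this handles almost all cross-type pairings, since $\tilde h_{3,i}^{(2)}$ and $\tilde h_{4,i}^{(2)}$ are the only tensors with a nonzero mixed component, and $\tilde h_{2,i}^{(1)}$, $\tilde h_{4,i}^{(3)}$ are the only ones with a nonzero $dr\otimes dr$ component. Whenever two tensors share a pure-$M$ piece I would match those pieces against the summands of \eqref{tensorsplitting}: by construction $\tilde h_{1,\cdot}^{(1)}\in TT_g$, $\tilde h_{3,\cdot}^{(1)}\in W_2$, $\tilde h_{4,\cdot}^{(1)}\in W_1$, while the pure-$M$ parts of $\tilde h_{2,\cdot}^{(1)}$ and $\tilde h_{4,\cdot}^{(3)}$ both lie in $C^\infty(M)\cdot g$. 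The only delicate pairing is $(\tilde h_{2,i}^{(1)},\tilde h_{4,j}^{(3)})$: the $dr\otimes dr$ and $S^2M$ contributions do not individually vanish, but the coefficients $f^2$ and $-n$ in $\tilde h_{4,j}^{(3)}$ are precisely what makes $\trace_{\tilde g}\tilde h_{4,j}^{(3)}=0$, whereas $\tilde h_{2,i}^{(1)}$ lies in the conformal class of $\tilde g$, so the two pieces cancel. Finally, orthogonality within a single type at different eigenfunction indices $j\neq m$ follows from orthonormality of $v_j,\omega_j,h_j$, once again using Lemma \ref{commutations} to turn $\int_M\langle\delta^*\omega_j,\delta^*\omega_m\rangle\,dV$ and $\int_M\langle\nabla^2 v_j,\nabla^2 v_m\rangle\,dV$ into $\delta_{jm}$ times an eigenvalue expression.

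No step in this argument is deep; the real difficulty is bookkeeping. The only place where a subtle conceptual choice is hidden is the selection of the type-$4$ tensors: the combinations $n\nabla^2 v+\Delta v\cdot g$ and $f^2g-n\,dr\otimes dr$ are tuned to be $\trace_{\tilde g}$-free and to lie in complementary summands of \eqref{tensorsplitting}, which is exactly why all but two of the off-diagonal terms of the type-$4$ block vanish, consistent with the $4\times4$-block picture outlined in the remark preceding the lemma.
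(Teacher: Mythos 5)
Your argument is correct and is exactly the route the paper intends: the paper states Lemma \ref{norms} without proof, but the identities you use are precisely the ones it invokes elsewhere (the volume form $f^n\,dr\wedge dV_g$, the pointwise orthogonal splitting into $dr\otimes dr$, mixed, and $S^2M$ parts together with \eqref{tensorsplitting}, and Lemma \ref{commutations} giving $\|\delta^*\omega_i\|^2=\tfrac12(\mu_i-\scal_g/n)$ and $\|\nabla^2v_i\|^2=\lambda_i(\lambda_i-\scal_g/n)$, which the paper itself cites in the proof of Lemma \ref{scalarproducts2}). The only point worth making explicit in your write-up is the pairing of the two mixed-type tensors $dr\odot f\omega_j$ and $dr\odot f\nabla v_m$, which vanishes because divergence-free one-forms are $L^2$-orthogonal to exact ones, as the paper recalls before \eqref{tensorsplitting}.
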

\begin{lem}\label{scalarproducts2} Let the tensors $\tilde{h}_{i,j}^{(k)}$ be as in \eqref{tensors}.
Then, we have the scalar products
\begin{equation}
\begin{aligned}
(\tilde{\Delta}_E\tilde{h}_{1,i}^{(1)},\tilde{h}_{1,i}^{(1)})_{L^2(\tilde{g})}&=\int_J(\varphi')^2f^n dr
 +\kappa_i\int_J\varphi^2f^{n-2}dr,\\
 (\tilde{\Delta}_E\tilde{h}_{2,i}^{(1)},\tilde{h}_{2,i}^{(1)})_{L^2(\tilde{g})}&=
(n+1)\int_{J}(\varphi')^2f^{n}dr+
(n+1)\lambda_i\int_{J}\varphi^2f^{n-2}dr-2\scal_{\tilde{g}}\int_{J}\varphi^2f^{n}dr,\\
 (\tilde{\Delta}_E\tilde{h}_{3,i}^{(1)},\tilde{h}_{3,i}^{(1)})_{L^2(\tilde{g})}&=
 \frac{1}{2}\left(\mu_i-\frac{\scal_{\tilde{g}}}{n}\right)\int_J(\varphi')^2f^n dr\\
 &\qquad+ \frac{1}{2}\left(\mu_i-\frac{\scal_{\tilde{g}}}{n}\right)^2\int_J\varphi^2f^{n-2}dr,\\
(\tilde{\Delta}_E\tilde{h}_{3,i}^{(2)},\tilde{h}_{3,i}^{(2)})_{L^2(\tilde{g})}&=2\mu_i\int_{J}f^{n-2}\varphi^2dr
 +(2n+6)\int_{J}(f')^2f^{n-2}\varphi^2dr\\&\qquad
 +2\int_{J}(\varphi')^2f^{n}dr -4\int_{J}f''f^{n-1}\varphi^2dr,\\
    (\tilde{\Delta}_E\tilde{h}_{4,i}^{(1)},\tilde{h}_{4,i}^{(1)})_{L^2(\tilde{g})}&= n\lambda_i[(n-1)\lambda_i-\scal_g]\int_J(\varphi')^2f^n dr \\
 &\qquad+n\lambda_i[(n-1)\lambda_i-\scal_g]\left(\lambda_i-2\frac{\scal_g}{n}\right)\int_J\varphi^2f^{n-2}dr,\\
  (\tilde{\Delta}_E\tilde{h}_{4,i}^{(2)},\tilde{h}_{4,i}^{(2)})_{L^2(\tilde{g})}&=(2n+6)\lambda_i\int_{J} \varphi^2(f')^2f^{n-2}dr+2\lambda_i\int_{J} (\varphi')^2f^ndr\\&\qquad+2\lambda_i\left(\lambda_i-\frac{\scal_{g}}{n}\right)\int_{J} \varphi^2f^{n-2}dr
 -4\lambda_i\int_{J}\varphi^2 f''f^{n-1}dr,\\
  (\tilde{\Delta}_E\tilde{h}_{4,i}^{(3)},\tilde{h}_{4,i}^{(3)})_{L^2(\tilde{g})}&= 
  n\left((n+1)\lambda_i-2\frac{\scal_{g}}{n}\right)\int_J \varphi^2 f^{n-2}dr
  +(n+1)n\int_J(\varphi')^2f^ndr\\
  &\qquad+2n^2(n+3)\int_J \varphi^2(f')^2f^{n-2}dr
  -4n^2\int_J\varphi^2 f'' f^{n-1}dr.
 \end{aligned}
 \end{equation}
 Moreover,
 \begin{align}
 (\tilde{\Delta}_E\tilde{h}_{i,j}^{(k)},\tilde{h}_{l,m}^{(n)})_{L^2(\tilde{g})}=0,
 \end{align}
 if $i\neq l$ or $j\neq m$.
\end{lem}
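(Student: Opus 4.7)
The plan is to compute the seven diagonal scalar products by observing that six of the tensors in \eqref{tensors} already match one of the three prototypes $\varphi f^2 h$, $\varphi\, dr\odot f\omega$, and $\varphi v\tilde{g}$ treated in Lemma \ref{scalarproducts1}: namely $\tilde{h}_{1,i}^{(1)},\tilde{h}_{3,i}^{(1)},\tilde{h}_{4,i}^{(1)}$ are of the first form, $\tilde{h}_{3,i}^{(2)},\tilde{h}_{4,i}^{(2)}$ of the second, and $\tilde{h}_{2,i}^{(1)}$ of the third. For each of these, I would substitute into Lemma \ref{scalarproducts1} and reduce the base-manifold data to eigenvalue information. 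The nontrivial reductions are: for $h=h_i$, use $\trace h_i=0$ and $\Delta_E h_i=\kappa_i h_i$; for $h=\delta^*\omega_i$, use $\trace h=-\delta\omega_i=0$ together with \eqref{commutators} and the Einstein Weitzenb\"ock identity $\Delta_H\omega_i=(\mu_i+\tfrac{\scal_g}{n})\omega_i$ to get $\Delta_E\delta^*\omega_i=(\mu_i-\tfrac{\scal_g}{n})\delta^*\omega_i$, combined with $\|\delta^*\omega_i\|_{L^2}^2=\tfrac{1}{2}(\mu_i-\tfrac{\scal_g}{n})$ via Lemma \ref{commutations}; for $h=n\nabla^2 v_i+\Delta v_i\cdot g$, use $\trace h=0$ (since $\trace\nabla^2 v_i=-\Delta v_i$), the identity $\Delta_E h=(\lambda_i-\tfrac{2\scal_g}{n})h$ from \eqref{commutators}, and $\|h\|_{L^2}^2=n\lambda_i[(n-1)\lambda_i-\scal_g]$ via Lemma \ref{commutations}. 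The auxiliary norms $\|\nabla v_i\|_{L^2}^2=\lambda_i$, $\|\nabla\omega_i\|_{L^2}^2=\mu_i$, and $\|\nabla^2 v_i\|_{L^2}^2=\lambda_i(\lambda_i-\tfrac{\scal_g}{n})$ complete the substitution.

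The seventh tensor $\tilde{h}_{4,i}^{(3)}=\varphi v_i(f^2 g-n\,dr\otimes dr)$ does not fit any of the three prototypes, so I would compute $(\tilde{\Delta}_E\tilde{h}_{4,i}^{(3)},\tilde{h}_{4,i}^{(3)})_{L^2(\tilde{g})}$ directly. Using the warped product Christoffel symbols one first records the nonvanishing components of $\tilde{\nabla}\tilde{h}_{4,i}^{(3)}$ (typical examples: $\tilde{\nabla}_0\tilde{h}_{00}=-n\varphi' v_i$, $\tilde{\nabla}_a\tilde{h}_{b0}=-(n+1)f'f\varphi v_i g_{ab}$, $\tilde{\nabla}_a\tilde{h}_{bc}=\varphi f^2\nabla_a v_i\,g_{bc}$). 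Splitting into cases according to whether each of the three indices is horizontal or vertical then assembles $|\tilde{\nabla}\tilde{h}|^2_{\tilde{g}}$. For the curvature term I would split $\langle\mathring{\tilde{R}}\tilde{h},\tilde{h}\rangle_{\tilde{g}}$ into its $(0,0)$- and spatial-blocks, use the given formulas $\tilde{R}_{i00j}=-f''fg_{ij}$ and $\tilde{R}_{ijkl}=f^2(R_{ijkl}+(f')^2(g_{ik}g_{jl}-g_{il}g_{jk}))$, and exploit the Einstein condition $\ric_g=\tfrac{\scal_g}{n}g$ to contract the trace $R_{aijb}g^{ij}$. Integration over $\widetilde{M}$, with $\|v_i\|_{L^2}^2=1$ and $\|\nabla v_i\|_{L^2}^2=\lambda_i$, yields the stated formula.

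For the off-diagonal assertion, same-type pairings with $j\neq m$ reduce to the base $L^2$-inner products $(h_j,h_m)$, $(v_j,v_m)$, or $(\omega_j,\omega_m)$, which vanish because these are distinct eigenelements of self-adjoint operators. Different-type pairings with $i\neq l$ are treated via Lemma \ref{mixedscalarproducts1}: its right-hand sides involve the base pairings $(\delta h,\omega)$, $(\nabla\trace h,\nabla v)$, and $(\trace h,v)$, each of which vanishes here because of the $L^2$-orthogonal splitting \eqref{tensorsplitting} and the trace-freeness of $h_i$, $\delta^*\omega_i$, and $n\nabla^2 v_i+\Delta v_i\cdot g$. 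The cross pairings involving $\tilde{h}_{4,i}^{(3)}$ that are not covered by Lemma \ref{mixedscalarproducts1} I would handle by the same direct method as for the diagonal term, exploiting $\trace_{\tilde{g}}\tilde{h}_{4,i}^{(3)}=0$ to decouple it from the conformal direction. I expect the main obstacle to be precisely this direct calculation for $\tilde{h}_{4,i}^{(3)}$: the eight cases of the gradient-norm assembly and the curvature-block contraction, together with the several cross pairings not covered by Lemma \ref{mixedscalarproducts1}, all require careful bookkeeping, though no new ideas beyond the warped-product formulas already collected.
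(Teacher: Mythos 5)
Your proposal is correct and follows essentially the same route as the paper: six of the seven diagonal terms are reduced to Lemma \ref{scalarproducts1} via the commutator identities \eqref{commutators}, Lemma \ref{commutations} and the eigentensor properties, the term for $\tilde{h}_{4,i}^{(3)}$ is computed directly from the warped-product Christoffel symbols (the paper organizes this by splitting off $\varphi v_i f^2 g$ and $n\varphi v_i\,dr\otimes dr$, but it is the same calculation), and the off-diagonal vanishing is obtained from polarisation, the orthogonality of the eigenbases, the splitting \eqref{tensorsplitting}, and the fact that $\tilde{\Delta}_E$ preserves the conformal direction. Your sample covariant-derivative components and norm identities check out, and writing $\scal_g$ rather than $\scal_{\tilde g}$ in the third formula is consistent with Lemma \ref{norms} and the later applications.
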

\begin{proof}The first, the second and the fourth formula follow immediately from
Lemma \ref{scalarproducts1}. Moreover, by inserting $h=\delta^*\omega_i$ in Lemma \ref{scalarproducts1}, we get
\begin{align} (\tilde{\Delta}_E\tilde{h}_{3,i}^{(1)},\tilde{h}_{3,i}^{(1)})_{L^2(\tilde{g})}=
\int_{J}\varphi^2f^{n-2}dr\cdot({\Delta_E}{h},{h})_{L^2({g})}
+\int_{J}(\varphi')^2f^n dr\cdot \left\|h\right\|^2_{L^2(g)},
\end{align}
because $\trace_g h=\delta \omega_i=0$. Furthermore,
\begin{align}
\Delta_E \circ\delta^*(\omega_i)=\delta^*\circ \left(\Delta-\frac{\scal}{n}\right)\omega_i,\qquad \delta(\delta^*\omega)=\frac{1}{2} \left(\Delta-\frac{\scal}{n}\right)\omega
\end{align}
by \eqref{commutators} and Lemma \ref{commutations} which proves the third formula. To prove the fifth formula, we first use Lemma \ref{scalarproducts1} for $h=n\nabla^2 v_i+\Delta v_i\cdot g$ to see that
\begin{align} (\tilde{\Delta}_E\tilde{h}_{4,i}^{(1)},\tilde{h}_{4,i}^{(1)})_{L^2(\tilde{g})}=
\int_{J}\varphi^2f^{n-2}dr\cdot({\Delta_E}{h},{h})_{L^2({g})}
+\int_{J}(\varphi')^2f^n dr\cdot \left\|h\right\|^2_{L^2(g)},
\end{align}
because we again have $\trace_g h=0$. Moreover,
\begin{align}
\Delta_E(n\nabla^2 v_i+\Delta v_i\cdot g)=n\nabla^2\left(\Delta-2\frac{\scal}{n}\right) v_i+\Delta \left(\Delta-2\frac{\scal}{n}\right)v_i\cdot g
\end{align}
and 
\begin{align}
 \left\|n\nabla^2 v_i+\Delta v_i\cdot g\right\|^2_{L^2(g)}=n\lambda_i[(n-1)\lambda_i-\scal_g],
\end{align}
 which follows quite immediately from Lemma \ref{commutations}. The sixth formula follows from inserting $\omega=\nabla v_i$ in Lemma \ref{scalarproducts1} and using
\begin{align}
\left\|\nabla^2 v_i\right\|_{L^2(g)}^2=\lambda_i\left(\lambda_i-\frac{\scal_{g}}{n}\right),
\end{align}
which follows from Lemma \ref{commutations} again.
For the last formula, we have to work a bit more. At first, we get from \eqref{cov_der_1.5} that
\begin{align}
|\tilde{\nabla}\varphi f^2 v_i g|^2_{\tilde{g}}=n\varphi^2f^{-2}|\nabla v|_g^2 +n(\varphi')^2 v_i^2+2n\varphi^2 f^{-2}(f')^2v_i^2.
\end{align}
Moerover, straightforward computations, using $\tilde{\nabla}_kdr_l=f'fg_{kl}$ yield
\begin{equation}\begin{split}
\tilde{\nabla}_0(\varphi v_i dr\otimes dr)_{00}&=\varphi'v_i,\qquad
\tilde{\nabla}_k(\varphi v_i dr\otimes dr)_{00}=\varphi\nabla_k v_i,\\
\tilde{\nabla}_k(\varphi v_i dr\otimes dr)_{l0}&=
\tilde{\nabla}_k(\varphi v_i dr\otimes dr)_{0l}=\varphi v_ig_{kl} f'f,\qquad
\end{split}\end{equation}
and therefore,
\begin{equation}\begin{split}
|n\tilde{\nabla}(\varphi v_i dr\otimes dr)|^2_{\tilde{g}}&=n^2(\varphi')^2v_i^2+n^2\varphi^2f^{-2}|\nabla v_i|^2+2n^3\varphi^2f^{-2}(f')^2v_i^2,\\
\langle n\tilde{\nabla}(\varphi v_i dr\otimes dr),\tilde{\nabla}(\varphi f^2v_i g)\rangle_{\tilde{g}}&=-2n^2 f'\cdot f^{-1}v_i^2\varphi^2.
\end{split}\end{equation}
The curvature terms are
\begin{equation}\begin{split}
\langle \mathring{\tilde{R}}(\varphi f^2v_ig),\varphi f^2 v_ig\rangle_{\tilde{g}}&=
\scal_{g}\varphi^2f^{-2}v^2+n\varphi^2 f^{-2}(f')^2v_i^2-n^2\varphi^2(f')^2f^{-2}v_i^2\\
\langle \mathring{\tilde{R}}(\varphi v_idr\otimes dr),\varphi v_idr\otimes dr\rangle_{\tilde{g}}&=0\\
\langle \mathring{\tilde{R}}(\varphi f^2v_ig),n\varphi  v_idr\otimes dr\rangle_{\tilde{g}}&=-n^2\varphi^2f''f^{-1}v_i^2
\end{split}\end{equation}
and the formula follows by adding and integrating. The assertion that
 \begin{align}
 (\tilde{\Delta}_E\tilde{h}_{i,j}^{(k)},\tilde{h}_{l,m}^{(n)})_{L^2(\tilde{g})}=0,
 \end{align}
 if $j\neq m$ follows from polarisation and the fact that the $h_i,\omega_i,v_i$ are eigentensors of the corresponding elliptic operators. That these expressions also vanish for $i\neq l$ and both not equal to $2$ follows from Lemma \ref{scalarproducts1} and Lemma \ref{mixedscalarproducts1} and the properties $\trace_gh_i=0$, $\delta_gh_i=0$ and $\delta_g\omega_i=0$. That the above expression is zero for $i=2$ and $l\neq2$ follows from the fact that $\tilde{\Delta}_E\tilde{h}_{2,j}^{(1)}\in C_{cs}^{\infty}(\widetilde{M})\cdot \tilde{g}$ (since $\tilde{\Delta}_E$ preserves this space) and $\trace_{\tilde{g}}\tilde{h}_{l,m}^{(n)}=0$ for $l\neq 2$.
\end{proof}
\begin{lem}\label{mixedscalarproducts2} Let $\varphi,\psi,\chi\in C^{\infty}_{cs}(I)$ with support contained in $J$ and $v_i, \omega_i, h_i$ as above. Furthermore, let
\begin{equation}
\begin{aligned}
\tilde{h}_{3,i}^{(1)}&=\varphi f^2\delta^*\omega_i,\quad
\tilde{h}_{3,i}^{(2)}=\psi \cdot dr\odot f\omega_i,\quad
\tilde{h}_{4,i}^{(1)}=\varphi f^2(n\nabla^2 v_i+\Delta v_i\cdot g),\\
\tilde{h}_{4,i}^{(2)}&=\psi \cdot dr\odot f \nabla v_i,\quad
\tilde{h}_{4,i}^{(3)}=\chi\cdot v_i(f^2g-ndr\otimes dr).
\end{aligned}
\end{equation}
Then,
\begin{equation}
\begin{aligned}
(\tilde{\Delta}_E\tilde{h}_{3,i}^{(1)},\tilde{h}_{3,i}^{(2)})_{L^2(\tilde{g})}&=
-2\left(\mu_i-\frac{\scal_g}{n}\right)\int_J \varphi\psi f'f^{n-2}dr,\\
(\tilde{\Delta}_E\tilde{h}_{4,i}^{(1)},\tilde{h}_{4,i}^{(2)})_{L^2(\tilde{g})}&=
-4[(n-1)\lambda_i-\scal_g]\lambda_i\int_J \varphi\psi f'f^{n-2}dr,\\
(\tilde{\Delta}_E\tilde{h}_{4,i}^{(1)},\tilde{h}_{4,i}^{(3)})_{L^2(\tilde{g})}&=0,\\
(\tilde{\Delta}_E\tilde{h}_{4,i}^{(2)},\tilde{h}_{4,i}^{(3)})_{L^2(\tilde{g})}&=
4(n+1)\lambda_i\int_J \psi\chi f'f^{n-2}dr.
\end{aligned}
\end{equation}
\end{lem}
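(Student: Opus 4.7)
The plan is to reduce each of the four mixed $L^2$-inner products to direct applications of Lemma \ref{mixedscalarproducts1} (or to a polarization of Lemma \ref{scalarproducts1} for two type-$\tilde h_1$ tensors), combined with the algebraic identities of Lemma \ref{commutations}. For the two pairings involving $\tilde h_{4,i}^{(3)}$, the key preparatory step is to rewrite this tensor as a linear combination of one type-$\tilde h_1$ and one type-$\tilde h_3$ tensor.

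First I would handle Formulas 1 and 2. Here $\tilde h_{3,i}^{(1)}$ and $\tilde h_{4,i}^{(1)}$ are directly of the form $\tilde h_1 = \varphi f^2 h$ (with $h = \delta^*\omega_i$ and $h = n\nabla^2 v_i + \Delta v_i\cdot g$, respectively), while $\tilde h_{3,i}^{(2)}$ and $\tilde h_{4,i}^{(2)}$ are of type $\tilde h_2 = \psi\, dr\odot f\omega$ (with $\omega = \omega_i$ and $\omega = \nabla v_i$). The first formula of Lemma \ref{mixedscalarproducts1} then reduces the inner product to $-4\int_J\varphi\psi f'f^{n-2}\,dr\cdot(\delta h,\omega)_{L^2(g)}$ in each case. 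For Formula 1, Lemma \ref{commutations} yields $(\delta\delta^*\omega_i,\omega_i)_{L^2(g)} = \tfrac{1}{2}(\mu_i - \scal_g/n)$. For Formula 2, the identity $\delta\nabla^2 v_i = \nabla(\Delta - \scal_g/n)v_i$ of Lemma \ref{commutations}, combined with the elementary $\delta(vg) = -\nabla v$, gives $\delta h = ((n-1)\lambda_i - \scal_g)\nabla v_i$, so the base pairing evaluates to $((n-1)\lambda_i - \scal_g)\lambda_i$ and the formula follows.

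Next I would address Formulas 3 and 4. The crucial algebraic rewrite, using $\tilde g = dr\otimes dr + f^2 g$, is
\begin{align*}
\tilde h_{4,i}^{(3)} = \chi v_i(f^2 g - n\,dr\otimes dr) = (n+1)\chi v_i f^2 g - n\chi v_i\tilde g,
\end{align*}
expressing $\tilde h_{4,i}^{(3)}$ as a difference of a type-$\tilde h_1$ tensor and a type-$\tilde h_3$ tensor. For Formula 3, the pairing of $\tilde\Delta_E\tilde h_{4,i}^{(1)}$ against the $\tilde h_3$-piece $\chi v_i\tilde g$ vanishes by the second formula of Lemma \ref{mixedscalarproducts1}, since $\trace_g(n\nabla^2 v_i + \Delta v_i\cdot g) = 0$. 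The remaining pairing against $(n+1)\chi v_i f^2 g$ is a mixed type-$\tilde h_1$ scalar product; polarizing Lemma \ref{scalarproducts1} reduces it to the base quantities $(h,v_i g)_{L^2(g)}$ and $(\Delta_E h, v_i g)_{L^2(g)}$, both of which vanish because $h$ is tracefree, $\Delta_E h$ is tracefree by \eqref{commutators}, and $v_i g$ is pure trace.

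For Formula 4, the pairing of $\tilde\Delta_E\tilde h_{4,i}^{(2)}$ against $\chi v_i\tilde g$ vanishes by the third formula of Lemma \ref{mixedscalarproducts1}, so only the pairing with $(n+1)\chi v_i f^2 g$ contributes. This is a further application of the first formula of Lemma \ref{mixedscalarproducts1} (now with $h = v_i g$ and $\omega = \nabla v_i$) which, via $\delta(v_i g) = -\nabla v_i$, yields $4\lambda_i\int_J\chi\psi f'f^{n-2}\,dr$; multiplying by $(n+1)$ gives the claimed identity. The main obstacle is spotting the decomposition of $\tilde h_{4,i}^{(3)}$ in terms of $\tilde g$ and a type-$\tilde h_1$ piece; once this algebraic simplification is in hand, the rest is tracefreeness bookkeeping and invoking already-established lemmas.
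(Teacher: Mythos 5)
Your proposal is correct and follows essentially the same route as the paper: Formulas 1 and 2 via the first identity of Lemma \ref{mixedscalarproducts1} together with Lemma \ref{commutations}, and Formulas 3 and 4 via the rewriting $\tilde{h}_{4,i}^{(3)}=(n+1)\chi v_i f^2g-n\chi v_i\tilde{g}$ combined with the trace-freeness arguments and a polarisation of Lemma \ref{scalarproducts1}. You have merely filled in more of the base-manifold computations (e.g. $\delta h=((n-1)\lambda_i-\scal_g)\nabla v_i$ and $\delta(v_ig)=-\nabla v_i$) than the paper spells out, and these are all correct.
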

\begin{proof}
The first two formula follows from Lemma \ref{mixedscalarproducts1} and Lemma \ref{commutations}. 
To prove the third formula, we first rewrite $\tilde{h}_{4,i}^{(3)}$ as
\begin{align}\label{tensorsplitting2}
\tilde{h}_{4,i}^{(3)}=(n+1)\chi v_i f^2g-n\chi v_i \tilde{g}=:\tilde{h}_1+\tilde{h}_2.
\end{align}
Then,
\begin{align}
(\tilde{\Delta}_E\tilde{h}_{4,i}^{(1)},\tilde{h}_2)_{L^2(\tilde{g})}=0
\end{align}
by Lemma \ref{mixedscalarproducts1}, since $\trace_g\tilde{h}_{4,i}^{(1)}=0$. A polarisation argument in combiniation with the first formula in Lemma \ref{scalarproducts1} yields
\begin{align}
(\tilde{\Delta}_E\tilde{h}_{4,i}^{(1)},\tilde{h}_1)_{L^2(\tilde{g})}=0.
\end{align}
The last formula follows from using \eqref{tensorsplitting2} and Lemma \ref{mixedscalarproducts1}.
\end{proof}
The above lemmas show that the quadratic from $\tilde{h}\mapsto (\tilde{\Delta}_E\tilde{h},\tilde{h})_{L^2(\tilde{g})}$ acting on $C_{cs}^{\infty}(S^2(J\times\Omega))$ is diagonal with respect to the $L^2$-orthogonal decomposition
\begin{align}
L^2(S^2(J\times\Omega))=\bigoplus_{i=1}^{\infty} V_{1,i}\oplus
\bigoplus_{i=0}^{\infty} V_{2,i}\oplus
\bigoplus_{i=1}^{\infty} V_{3,i}\oplus
\bigoplus_{i=0}^{\infty} V_{4,i},
\end{align}
where
\begin{equation}
\begin{aligned}
V_{1,i}=&L^2(J)\cdot f^2h_i,\\
V_{2,i}=&L^2(J)\cdot \tilde{g},\\
V_{3,i}=&L^2(J)\cdot f^2\delta^*\omega_i\oplus L^2(J)\cdot dr\odot f\omega_i,\\
V_{4,i}=&L^2(J)\cdot f^2(n\nabla^2v_i+\Delta v_i\cdot g)\oplus L^2(J)\cdot dr\odot f\nabla v_i
\oplus L^2(J)\cdot v_i(f^2g-ndr\otimes dr).
\end{aligned}
\end{equation}
Thus to prove our main results in the next section, we consider the Einstein operator on each of these subspaces separately.
\section{Proof of the main results}\label{section3}
In this section, we apply the formulas of the previous section to the concrete models of the introduction. Throughout, we may assume that $n\geq 4$. For $n=2,3$, the metric $g$ is of constant curvature so that $\tilde{g}$ is either the flat or the hyperbolic metric on $\R^{n+1}$ for which the result is already known to be true. Note that we even check the stability condition \eqref{strictlystable} for all $h\in C_{cs}^{\infty}(S^2M)$.
\subsection{The metric $dr^2+e^{2r}g$}
 In this section, we prove Theorem \ref{thmexponentialcone}. We start with the following
\begin{lem}\label{infima_1}We have
\begin{align*}
\inf_{\varphi\in C^{\infty}_{cs}(\R)}\frac{\int_{\R} (\varphi')^2e^{nr}dr}{\int_{\R} \varphi^2e^{(n-2)r}dr}=0,\qquad
\inf_{\varphi\in C^{\infty}_{cs}(\R)}\frac{\int_{\R} (\varphi')^2e^{nr}dr}{\int_{\R} \varphi^2e^{nr}dr}=\frac{n^2}{4}.
\end{align*}
\end{lem}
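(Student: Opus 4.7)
My plan is to remove the weight in the numerator via the substitution $\psi(r)=e^{nr/2}\varphi(r)$, which is a bijection $C^{\infty}_{cs}(\R)\to C^{\infty}_{cs}(\R)$. Writing $\varphi=e^{-nr/2}\psi$, so that $\varphi'=e^{-nr/2}(\psi'-\tfrac{n}{2}\psi)$, I obtain
\begin{align*}
(\varphi')^2e^{nr}=(\psi')^2-n\psi'\psi+\tfrac{n^2}{4}\psi^2.
\end{align*}
Because $\psi$ is compactly supported, integrating $\psi'\psi=\tfrac{1}{2}(\psi^2)'$ gives $\int_{\R}\psi'\psi\,dr=0$, hence
\begin{align*}
\int_{\R}(\varphi')^2e^{nr}\,dr=\int_{\R}(\psi')^2\,dr+\tfrac{n^2}{4}\int_{\R}\psi^2\,dr.
\end{align*}

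For the second infimum, I use that $\int_{\R}\varphi^2e^{nr}\,dr=\int_{\R}\psi^2\,dr$, so the quotient rewrites as
\begin{align*}
\frac{\int_{\R}(\varphi')^2e^{nr}\,dr}{\int_{\R}\varphi^2e^{nr}\,dr}=\frac{n^2}{4}+\frac{\int_{\R}(\psi')^2\,dr}{\int_{\R}\psi^2\,dr}.
\end{align*}
This is bounded below by $n^2/4$, and equality is approached by taking a fixed bump $\psi_0\in C^{\infty}_{cs}(\R)$ and setting $\psi_R(r)=\psi_0(r/R)$: then $\int(\psi_R')^2\,dr=R^{-1}\int(\psi_0')^2\,dr$ while $\int\psi_R^2\,dr=R\int\psi_0^2\,dr$, so the standard Rayleigh quotient tends to $0$ as $R\to\infty$.

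For the first infimum, the same substitution gives $\int_{\R}\varphi^2e^{(n-2)r}\,dr=\int_{\R}\psi^2 e^{-2r}\,dr$, so
\begin{align*}
\frac{\int_{\R}(\varphi')^2e^{nr}\,dr}{\int_{\R}\varphi^2e^{(n-2)r}\,dr}=\frac{\int_{\R}(\psi')^2\,dr+\tfrac{n^2}{4}\int_{\R}\psi^2\,dr}{\int_{\R}\psi^2 e^{-2r}\,dr}.
\end{align*}
Now I fix $\psi_0\in C^{\infty}_{cs}(\R)$ and translate: $\psi_R(r):=\psi_0(r-R)$ with $R\to-\infty$. The numerator is translation invariant, while $\int\psi_R^2 e^{-2r}\,dr=e^{-2R}\int\psi_0^2 e^{-2s}\,ds\to\infty$, so the quotient tends to $0$. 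Since the quotient is nonnegative, the infimum equals $0$.

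The only conceptual step is the substitution/integration-by-parts identity; everything else is routine. No obstacle of substance is expected, since the constant $n^2/4$ is forced by the cross term in the expansion of $(\psi'-\tfrac{n}{2}\psi)^2$ and the positivity of $\int(\psi')^2\,dr$.
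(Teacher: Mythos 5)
Your proposal is correct, but it takes a genuinely different route from the paper. The paper substitutes $s=e^{r}$ to convert both quotients into power-weighted quotients on $(0,\infty)$: the first becomes $\int_0^\infty(\partial_s\varphi)^2s^{n+1}ds\big/\int_0^\infty\varphi^2s^{n-3}ds$, whose infimum $C$ satisfies $C=\varepsilon^2C$ under the dilation $s\mapsto\varepsilon s$ and hence vanishes, while the second becomes the classical Hardy quotient $\int_0^\infty(\partial_s\varphi)^2s^{n+1}ds\big/\int_0^\infty\varphi^2s^{n-1}ds$ with sharp constant $n^2/4$. You instead perform the ground-state substitution $\psi=e^{nr/2}\varphi$, which removes the weight from the numerator outright and makes the constant $n^2/4$ appear algebraically from the cross term; sharpness then reduces to the elementary facts that the one-dimensional Rayleigh quotient $\int(\psi')^2/\int\psi^2$ has infimum $0$ (dilation) and that $\int\psi^2e^{-2r}dr$ can be made arbitrarily large relative to a translation-invariant numerator (translation to $-\infty$). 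Your argument is self-contained — it effectively unpacks the Hardy inequality rather than citing it — at the cost of being slightly longer; the paper's version is shorter but leans on the optimal Hardy constant as a known input, which it reuses anyway in the Ricci-flat cone section. All your computations (the vanishing of $\int\psi'\psi\,dr$, the scalings $R^{-1}$ versus $R$, and the factor $e^{-2R}$) check out.
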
 
 \begin{proof}
 By substitution $s=e^r$, the first term transforms as
 \begin{align*}
 \inf_{\varphi\in C^{\infty}_{cs}(\R)}\frac{\int_{\R} (\partial_r\varphi)^2e^{nr}dr}{\int_{\R} \varphi^2e^{(n-2)r}dr}= \inf_{\varphi\in C^{\infty}_{cs}((0,\infty))}\frac{\int_{0}^{\infty} (\partial_s\varphi)^2s^{n+1}ds}{\int_{0}^{\infty} \varphi^2s^{n-3}ds}=:C
 \end{align*}
 and by substituting $s=\epsilon t$, we see that $C=\varepsilon^2\cdot C$ which proves the first equality. Again by substituting $s=e^r$, we have
  \begin{align*}
 \inf_{\varphi\in C^{\infty}_c(\R)}\frac{\int_{\R} (\partial_r\varphi)^2e^{nr}dr}{\int_{\R} \varphi^2e^{nr}dr}= \inf_{\varphi\in C^{\infty}_{cs}((0,\infty))}\frac{\int_{0}^{\infty} (\partial_s\varphi)^2s^{n+1}ds}{\int_{0}^{\infty} \varphi^2s^{n-1}ds}=\frac{n^2}{4}
 \end{align*}
 by the Hardy inequality.
 \end{proof}
  We now study the Einstein operator as a quadratic form on the subspaces $V_{k,i}$ introduced in the last section. Let
 \begin{align}
 \tilde{h}=\varphi e^{2r}h_i\in V_{1,i}.
 \end{align}
 Then
 \begin{align}
 (\tilde{\Delta}_E\tilde{h},\tilde{h})_{L^2(\tilde{g})}&=\int_{\R}(\varphi')^2e^{nr} dr
 +\kappa_i\int_{\R}\varphi^2e^{(n-2)r}dr.
 \end{align}
 By Lemma \ref{infima_1}, this expression is nonnegative for any choice of $\varphi$ if and only if $\kappa_i\geq0$. In this case, the form is even strictly positive, since
 \begin{align}
 (\tilde{\Delta}_E\tilde{h},\tilde{h})_{L^2(\tilde{g})}\geq\int_{\R}(\varphi')^2e^{nr} dr\geq \frac{n^2}{4}\int_{\R}\varphi^2e^{nr} dr=\frac{n^2}{4}\left\|\tilde{h}\right\|_{L^2(\tilde{g})}^2.
 \end{align}
 Now we are going to show that on all other $V_{k,i}$, $k>1$, $\tilde{\Delta}_E$ is always positive definite. Pick
  \begin{align}
 \tilde{h}=\varphi v_i\tilde{g}\in V_{2,i},
 \end{align}
 then
 \begin{equation}\begin{split}
 (\tilde{\Delta}_E\tilde{h},\tilde{h})_{L^2(\tilde{g})}&=
(n+1)\int_{\R}(\varphi')^2e^{nr}dr+
(n+1)\lambda_i\int_{\R}\varphi^2e^{(n-2)r}dr+2n(n+1)\int_{\R}\varphi^2e^{nr}dr\\
&\geq \left(\frac{n^2}{4}+2n\right)(n+1)\int_{\R}\varphi^2e^{nr}dr=\left(\frac{n^2}{4}+2n\right)\left\|\tilde{h}\right\|_{L^2(\tilde{g})}^2,
\end{split}
\end{equation}
since $\lambda_i\geq0$ for all $i\geq0$. Next, let
\begin{align}
 \tilde{h}= \tilde{h}_1+ \tilde{h}_2=\varphi f^2\delta^*\omega_i+
\psi \cdot dr\odot f\omega_i \in V_{3,i}.
\end{align}
Then we have the scalar products
\begin{equation}
\begin{aligned}
 (\tilde{\Delta}_E\tilde{h}_1,\tilde{h}_1)_{L^2(\tilde{g})}&=
 \frac{1}{2}\mu_i\int_{\R}(\varphi')^2e^{nr} dr
 + \frac{1}{2}\mu_i^2\int_{\R}\varphi^2e^{(n-2)r}dr,\\
(\tilde{\Delta}_E\tilde{h}_2,\tilde{h}_2)_{L^2(\tilde{g})}&=2\mu_i\int_{\R}\psi^2e^{(n-2)r}dr
 +(2n+6)\int_{\R}\psi^2e^{nr}dr\\&
 +2\int_{\R}(\psi')^2e^{nr}dr -4\int_{\R}\psi^2e^{nr}dr,\\
 (\tilde{\Delta}_E\tilde{h}_1,\tilde{h}_2)_{L^2(\tilde{g})}&=-2\mu_i\int_{\R}\varphi\psi e^{(n-1)r}dr,
 \end{aligned}
 \end{equation}
 which we can estimate by
\begin{equation}
\begin{aligned}
  (\tilde{\Delta}_E\tilde{h}_1,\tilde{h}_1)_{L^2(\tilde{g})}&\geq 
  \frac{n^2}{8}\mu_i\int_{\R}\varphi^2e^{nr} dr
 + \frac{1}{2}\mu_i^2\int_{\R}\varphi^2e^{(n-2)r}dr,\\
 (\tilde{\Delta}_E\tilde{h}_2,\tilde{h}_2)_{L^2(\tilde{g})}&\geq2\mu_i\int_{\R}\psi^2e^{(n-2)r}dr
 +2(n+1)\int_{\R}\psi^2e^{nr}dr
 +\frac{n^2}{2}\int_{\R}\psi^2e^{nr}dr, \\
 2|(\tilde{\Delta}_E\tilde{h}_1,\tilde{h}_2)_{L^2(\tilde{g})}|& \leq
 \frac{1}{2}\mu_i^2\int_{\R}\varphi^2 e^{(n-2)r}dr+8\int_{\R}\psi^2e^{nr}dr.
 \end{aligned}
 \end{equation}
 Since $n\geq 4$, we obtain
\begin{equation}
\begin{aligned}
(\tilde{\Delta}_E(\tilde{h}_1+\tilde{h}_2),\tilde{h}_1+\tilde{h}_2)_{L^2(\tilde{g})}
&= (\tilde{\Delta}_E\tilde{h}_1,\tilde{h}_1)_{L^2(\tilde{g})}+2(\tilde{\Delta}_E\tilde{h}_1,\tilde{h}_2)_{L^2(\tilde{g})}+(\tilde{\Delta}_E\tilde{h}_2,\tilde{h}_2)_{L^2(\tilde{g})}\\
&\geq \frac{n^2}{4}\frac{\mu_i}{2}\int_{\R}\varphi^2e^{nr} dr +\frac{n^2}{4}2\int_{\R}\psi^2e^{nr}dr \\
&= \frac{n^2}{4}\left(\left\|\tilde{h}_1\right\|_{L^2(\tilde{g})}^2+\left\|\tilde{h}_2\right\|_{L^2(\tilde{g})}^2\right)=\frac{n^2}{4}\left\|\tilde{h}\right\|_{L^2(\tilde{g})}^2,
 \end{aligned}
 \end{equation}
  which shows that $\tilde{\Delta}_E$ is strictly positive on the spaces $V_{3,i}$.
 It remains to consider the spaces $V_{4,i}$. Let
 \begin{align}
  \tilde{h}= \tilde{h}_1+ \tilde{h}_2+\tilde{h}_3=\varphi f^2(n\nabla^2 v_i+\Delta v_i\cdot g)+\psi \cdot dr\odot \nabla v_i+\chi\cdot v_i(f^2g-ndr\otimes dr)\in V_{4,i}.
 \end{align}
Using Lemma \ref{scalarproducts2} and Lemma \ref{mixedscalarproducts2}, have the scalar products 
\begin{equation}
\begin{aligned}
  (\tilde{\Delta}_E\tilde{h}_1,\tilde{h}_1)_{L^2(\tilde{g})}&= n(n-1)\lambda_i^2\int_{\R}(\varphi')^2e^{nr} dr+n(n-1)\lambda_i^3\int_{\R}\varphi^2e^{(n-2)r}dr,\\
  (\tilde{\Delta}_E\tilde{h}_2,\tilde{h}_2)_{L^2(\tilde{g})}&=(2n+6)\lambda_i\int_{\R} \psi^2e^{nr}dr+2\lambda_i\int_{\R} (\psi')^2e^{nr}dr\\&\qquad+2\lambda_i^2\int_{\R} \psi^2e^{(n-2)r}dr
 -4\lambda_i\int_{\R}\psi^2 e^{nr}dr,\\
  (\tilde{\Delta}_E\tilde{h}_3,\tilde{h}_3)_{L^2(\tilde{g})}&= 
  n(n+1)\lambda_i\int_{\R} \varphi^2 e^{(n-2)r}dr
  +(n+1)n\int_{\R}(\varphi')^2e^{nr}dr\\
  &\qquad+2n^2(n+3)\int_{\R} \varphi^2e^{nr}dr
  -4n^2\int_{\R}\varphi^2 e^{nr}dr, \\
  (\tilde{\Delta}_E\tilde{h}_1,\tilde{h}_2)_{L^2(\tilde{g})}&=  -4(n-1)\lambda_i^2\int_{\R}\varphi\psi e^{(n-1)r}dr,\\
    (\tilde{\Delta}_E\tilde{h}_1,\tilde{h}_3)_{L^2(\tilde{g})}&=0,\\
      (\tilde{\Delta}_E\tilde{h}_2,\tilde{h}_3)_{L^2(\tilde{g})}&=
      4(n+1)\lambda_i\int_{\R}\psi\chi e^{(n-1)r}dr.
 \end{aligned}
 \end{equation}
 By Lemma \ref{infima_1}, we have lower estimates
\begin{equation}
\begin{aligned}
 (\tilde{\Delta}_E\tilde{h}_1,\tilde{h}_1)_{L^2(\tilde{g})}&\geq \frac{n^3}{4}(n-1)\lambda_i^2\int_{\R}\varphi^2e^{nr} dr+n(n-1)\lambda_i^3\int_{\R}\varphi^2e^{(n-2)r}dr,\\
 (\tilde{\Delta}_E\tilde{h}_2,\tilde{h}_2)_{L^2(\tilde{g})}&\geq 2(n+1)\lambda_i\int_{\R} \psi^2e^{nr}dr+\frac{n^2}{2}\lambda_i\int_{\R} \psi^2e^{nr}dr+2\lambda_i^2\int_{\R} \psi^2e^{(n-2)r}dr,\\
 (\tilde{\Delta}_E\tilde{h}_3,\tilde{h}_3)_{L^2(\tilde{g})}&\geq 
  n(n+1)\lambda_i\int_{\R} \chi^2 e^{(n-2)r}dr
  +\frac{n^3}{4}(n+1)\int_{\R}\chi^2e^{nr}dr\\ &\qquad+2n^2(n+1)\int_{\R} \chi^2e^{nr}dr,
 \end{aligned}
 \end{equation}
 and by the Young inequality, we get upper estimates
\begin{equation}
\begin{aligned}
2| (\tilde{\Delta}_E\tilde{h}_1,\tilde{h}_2)_{L^2(\tilde{g})}|&\leq
n(n-1)\lambda_i^3\int_{\R}\varphi^2e^{(n-2)r}dr+4\frac{n-1}{n}\lambda_i\int_{\R}\psi^2e^{nr}dr\\
&\qquad+\frac{n^3}{16}(n-1)\lambda_i^2\int_{\R}\varphi^2e^{nr}dr+\frac{64\lambda_i^2}{n^3(n-1)}
\int_{\R}\psi^2e^{(n-2)r}dr,\\
2| (\tilde{\Delta}_E\tilde{h}_2,\tilde{h}_3)_{L^2(\tilde{g})}|&\leq
4\frac{n+1}{n}\lambda_i\int_{\R}\psi^2 e^{nr}dr
+(n+1)n\lambda_i\int_{\R}\chi^2 e^{(n-2)r}dr\\
&\qquad+\frac{2(n+1)}{n^2(n+1)}\int_{\R}\psi^2e^{(n-2)r}dr+2n^2(n+1)\int_{\R}\chi^2e^{nr}dr.
 \end{aligned}
 \end{equation}
 Thus,
\begin{equation}
\begin{aligned}
 (\tilde{\Delta}_E(\tilde{h}_1+\tilde{h}_2+\tilde{h}_3),\tilde{h}_1+\tilde{h}_2+\tilde{h}_3)_{L^2(\tilde{g})}&\geq \frac{3}{4}\frac{n^3}{4}(n-1)\lambda_i^2\int_{\R}\varphi^2e^{nr} dr+\frac{n^2}{2}\lambda_i\int_{\R} \psi^2e^{nr}dr\\& +\frac{n^3}{4}(n+1)\int_{\R}\chi^2e^{nr}dr\\
 &\geq \frac{3}{4}\frac{n^2}{4}\left\|\tilde{h}_1\right\|_{L^2(\tilde{g})}^2+\frac{n^2}{4}\left\|\tilde{h}_2\right\|_{L^2(\tilde{g})}^2+\frac{n^2}{4}\left\|\tilde{h}_3\right\|_{L^2(\tilde{g})}^2\\
 &\geq  \frac{3}{4}\frac{n^2}{4}\left\|\tilde{h}\right\|_{L^2(\tilde{g})}^2,
 \end{aligned}
 \end{equation}
 which shows that $\tilde{\Delta}_E$ is positive on $V_{4,i}$. This proves the theorem.
 \subsection{The Ricci-flat cone}

In this section, we prove Theorem \ref{thmricciflatcone}. We will frequently use the fact that
\begin{align}\label{Hardy}
 \inf_{\varphi\in C^{\infty}_c((0,\infty))}\frac{\int_{0}^{\infty} (\varphi')^2r^{n}dr}{\int_{0}^{\infty} \varphi^2r^{n-2}dr}=\frac{(n-1)^2}{4},
\end{align}
which follows from the Hardy inequality.
The arguments here are analogous to those of the previous subsection. 
Here, we may assume that $\Omega=M$ since $M$ is compact. We also have $\mu_i\geq (n-1)$, $\lambda_1=0$ and $\lambda_i\geq n$ for all $i\geq2$ by Obata's eigenvalue estimate \cite[Theorem 1 and Theorem 2]{Ob62}.
Let
 \begin{align}
 \tilde{h}=\varphi r^2h_i\in V_{1,i}.
 \end{align}
 Then
 \begin{equation}
 \begin{aligned}
 (\tilde{\Delta}_E\tilde{h},\tilde{h})_{L^2(\tilde{g})}&=\int_{0}^{\infty}(\varphi')^2r^n dr
 +\kappa_i\int_{0}^{\infty}\varphi^2r^{n-2}dr
 \geq\left[\frac{(n-1)^2}{4}+\kappa_i\right]\int_{0}^{\infty}\varphi^2r^{n-2}dr .
 \end{aligned}
 \end{equation}
 Since this inequality is optimal by \eqref{Hardy}, the left-hand side is positive for any choice of $\varphi$ if and only if $\kappa_i\geq -\frac{1}{4}(n-1)^2$.
  \begin{rem}
Ricci-flat cones are never strictly stable. By the above, we can choose for any $\epsilon_0>0$ a function $\varphi$ such that
\begin{align}\label{notstrictlypositive}
\frac{ (\tilde{\Delta}_E\tilde{h},\tilde{h})_{L^2(\tilde{g})}}{\left\|\tilde{h}\right\|_{L^2(\tilde{g})}}\leq \left[\frac{(n-1)^2}{4}+\kappa_i+\epsilon_0\right] \frac{\int_{0}^{\infty}\varphi^2r^{n-2}dr }{\int_{0}^{\infty}\varphi^2r^{n}dr}.
\end{align}
Under the transformation $\varphi(r)\to \varphi(\epsilon r) $, \eqref{notstrictlypositive} still holds because the quotient in \eqref{Hardy} is invariant under this transformation. As $\epsilon\to 0$, the right-hand side of \eqref{notstrictlypositive} converges to zero which violates strict stability.
 \end{rem}
 \noindent
   Now pick
  \begin{align}
 \tilde{h}=\varphi v_i\tilde{g}\in V_{2,i},
 \end{align}
 then
 \begin{align}
 (\tilde{\Delta}_E\tilde{h},\tilde{h})_{L^2(\tilde{g})}&=
(n+1)\int_{0}^{\infty}(\varphi')^2r^{n}dr+
(n+1)\lambda_i\int_{0}^{\infty}\varphi^2r^{n-2}dr\geq0
 \end{align}
 by Lemma \ref{scalarproducts2}.
 Next, let
\begin{align}
 \tilde{h}= \tilde{h}_1+ \tilde{h}_2=\varphi f^2\delta^*\omega_i+
\psi \cdot dr\odot f\omega_i \in V_{3,i}.
\end{align}
Then by Lemma \ref{scalarproducts2} and Lemma \ref{mixedscalarproducts2},
\begin{equation}
\begin{aligned}
(\tilde{\Delta}_E\tilde{h}_1,\tilde{h}_1)_{L^2(\tilde{g})}&=
 \frac{1}{2}(\mu_i-(n-1))\int_{0}^{\infty}(\varphi')^2r^{n} dr,\\
 &\qquad+ \frac{1}{2}(\mu_i-(n-1))^2\int_{0}^{\infty}\varphi^2r^{n-2}dr,\\
(\tilde{\Delta}_E\tilde{h}_2,\tilde{h}_2)_{L^2(\tilde{g})}&=2\mu_i\int_{0}^{\infty}\psi^2r^{n-2}dr
 +(2n+6)\int_{0}^{\infty}\psi^2r^{n-2}dr
 +2\int_{0}^{\infty}(\psi')^2r^{n}dr,\\
 (\tilde{\Delta}_E\tilde{h}_1,\tilde{h}_2)_{L^2(\tilde{g})}&=-2(\mu_i-(n-1))\int_{0}^{\infty}\varphi\psi r^{n-2}dr,
 \end{aligned}
 \end{equation}
 and by the Hardy inequality and the Young inequality,
\begin{equation}
\begin{aligned}
  (\tilde{\Delta}_E\tilde{h}_1,\tilde{h}_1)_{L^2(\tilde{g})}&\geq
 \frac{(n-1)^2}{8}(\mu_i-(n-1))\int_{0}^{\infty}\varphi^2r^{n-2} dr\\
 &\qquad+ \frac{1}{2}(\mu_i-(n-1))^2\int_{0}^{\infty}\varphi^2r^{n-2}dr,\\
(\tilde{\Delta}_E\tilde{h}_2,\tilde{h}_2)_{L^2(\tilde{g})}&\geq2\mu_i\int_{0}^{\infty}\psi^2r^{n-2}dr
 +(2n+6)\int_{0}^{\infty}\psi^2r^{n}dr\\& \qquad+\frac{(n-1)^2}{2}\int_{0}^{\infty}\psi^2r^{n-2}dr,\\
 2| (\tilde{\Delta}_E\tilde{h}_1,\tilde{h}_2)_{L^2(\tilde{g})} |&\leq \frac{1}{2}(\mu_i-(n-1))^2\int_{0}^{\infty}\varphi^2r^{n-2}dr+8\int_{0}^{\infty}\psi^2r^{n-2}dr,
 \end{aligned}
 \end{equation}
 so it follows that
 \begin{align}
  (\tilde{\Delta}_E(\tilde{h}_1+\tilde{h}_2),\tilde{h}_1+\tilde{h}_2)_{L^2(\tilde{g})}&\geq 0.
  \end{align}
   It now just remains to consider the spaces $V_{4,i}$. Let
 \begin{align}
  \tilde{h}= \tilde{h}_1+ \tilde{h}_2+\tilde{h}_3=\varphi f^2(n\nabla^2 v_i+\Delta v_i\cdot g)+\psi \cdot dr\odot \nabla v_i+\chi\cdot v_i(f^2g-ndr\otimes dr)\in V_{4,i}.
 \end{align}
We have the scalar products 
\begin{equation}
\begin{aligned}   (\tilde{\Delta}_E\tilde{h}_1,\tilde{h}_1)_{L^2(\tilde{g})}&= (n-1)n\lambda_i(\lambda_i-n)\int_0^{\infty}(\varphi')^2r^n dr \\
 &\qquad+n(n-1)\lambda_i(\lambda_i-n)(\lambda_i-2(n-1))\int_0^{\infty}\varphi^2r^{n-2}dr,\\
  (\tilde{\Delta}_E\tilde{h}_2,\tilde{h}_2)_{L^2(\tilde{g})}&=(2n+6)\lambda_i\int_{0}^{\infty} \varphi^2r^{n-2}dr+2\lambda_i\int_{0}^{\infty} (\varphi')^2r^ndr\\&\qquad+2\lambda_i\left(\lambda_i-(n-1)\right)\int_{0}^{\infty} \varphi^2r^{n-2}dr,
\\
  (\tilde{\Delta}_E\tilde{h}_3,\tilde{h}_3)_{L^2(\tilde{g})}&= 
  n((n+1)\lambda_i-2(n-1))\int_{0}^{\infty} \varphi^2 r^{n-2}dr
  +(n+1)n\int_{0}^{\infty}(\varphi')^2r^ndr\\
  &\qquad+2n^2(n+3)\int_{0}^{\infty} \varphi^2r^{n-2}dr,
 \end{aligned}
 \end{equation}
  and
\begin{equation}
\begin{aligned} 
(\tilde{\Delta}_E\tilde{h}_1,\tilde{h}_2)_{L^2(\tilde{g})}&= 
-4(n-1)\lambda_i(\lambda_i-n)\int_0^{\infty}\varphi\psi r^{n-2}dr,\\
(\tilde{\Delta}_E\tilde{h}_2,\tilde{h}_3)_{L^2(\tilde{g})}&= 
4(n+1)\lambda_i\int_0^{\infty}\psi\chi r^{n-2}dr.
 \end{aligned}
 \end{equation}
  By the Hardy inequality, we have lower estimates
\begin{equation}
\begin{aligned}     (\tilde{\Delta}_E\tilde{h}_1,\tilde{h}_1)_{L^2(\tilde{g})}&\geq (n-1)n\lambda_i(\lambda_i-n)^2\int_0^{\infty}\varphi^2r^{n-2} dr \\
 &\qquad+\frac{1}{4}n(n-1)(n-3)^2\lambda_i(\lambda_i-n)\int_0^{\infty}\varphi^2r^{n-2}dr,\\
  (\tilde{\Delta}_E\tilde{h}_2,\tilde{h}_2)_{L^2(\tilde{g})}&\geq(2n+8+\frac{1}{2}(n-1)^2)\lambda_i\int_0^{\infty} \psi^2r^{n-2}dr+2\lambda_i(\lambda_i-n)\int_0^{\infty} \psi^2r^{n-2}dr,\\
  (\tilde{\Delta}_E\tilde{h}_3,\tilde{h}_3)_{L^2(\tilde{g})}&\geq
  n(n+1)\lambda_i\int_0^{\infty} \varphi^2 r^{n-2}dr
  +\frac{1}{4}(n+1)n(n+3)^2\int_0^{\infty}\varphi^2r^{n-2}dr,
 \end{aligned}
 \end{equation}
and for the off-diagonal terms, we use the Young inequality to show
\begin{equation}
\begin{aligned}
  2|(\tilde{\Delta}_E\tilde{h}_1,\tilde{h}_2)_{L^2(\tilde{g})}|&\leq
  2\alpha (n-1)\lambda_i(\lambda_i-n)\int_0^{\infty}|\varphi\psi| r^{n-2}dr\\&\qquad+
  2(4-\alpha)(n-1)\lambda_i(\lambda_i-n)\int_0^{\infty}|\varphi\psi| r^{n-2}dr\\
  &\leq n(n-1)\lambda_i(\lambda_i-n)^2\int_0^{\infty}\varphi^2r^{n-2}dr
  + \alpha^2\frac{n-1}{n}\lambda_i\int_0^{\infty}\psi^2r^{n-2}dr\\
  &\qquad+\frac{1}{4}n(n-1)(n-3)^2\lambda_i(\lambda_i-n)\int_0^{\infty}\varphi^2r^{n-2}dr\\
  &\qquad+ (4-\alpha)^2\frac{4(n-1)}{n(n-3)^2}\lambda_i(\lambda_i-n)\int_0^{\infty}\psi^2r^{n-2}dr,\\
   2|(\tilde{\Delta}_E\tilde{h}_2,\tilde{h}_3)_{L^2(\tilde{g})}|&\leq 
   16\frac{n+1}{n}\lambda_i \int_0^{\infty}\psi^2r^{n-2}dr
   +n(n+1)\lambda_i\int_0^{\infty}\chi^2r^{n-2}dr,
 \end{aligned}
 \end{equation} 
where $\alpha\in[0,4]$ is some parameter which we choose later. We get
\begin{equation}
\begin{aligned}
(\tilde{\Delta}_E(\tilde{h}_1+\tilde{h}_2+\tilde{h}_3)&,\tilde{h}_1+\tilde{h}_2+\tilde{h}_3)_{L^2(\tilde{g})}\\ &\geq \left[2n+8+\frac{1}{2}(n-1)^2-16\frac{n+1}{n}-\alpha^2\frac{n-1}{n}\right]\lambda_i\int_0^{\infty} \psi^2r^{n-2}dr\\
&\qquad+\left[2-(4-\alpha)^2\frac{4(n-1)}{n(n-3)^2}\right]\lambda_i(\lambda_i-n)\int_0^{\infty} \psi^2r^{n-2}dr.
 \end{aligned}
 \end{equation} 
The right hand side is nonnegative for $n\geq 6$ if we choose $\alpha=2$. If $n=5$, we may choose $\alpha=4-\sqrt{2}$ to get the right sign. In dimension $4$ we are unfortunately not able to choose an appropriate $\alpha$. In this case, we argue as follows: Suppose that $\varphi=\psi=\chi$ and that $\int_{0}^{\infty}\varphi^2r^{n-2}dr=1$. We consider $\tilde{\Delta}_E$ on the space
\begin{align}
 \text{span}\left\{\varphi f^2(n\nabla^2 v_i+\Delta v_i\cdot g),\varphi \cdot dr\odot \nabla v_i,\varphi\cdot v_i(f^2g-ndr\otimes dr)\right\},
\end{align}
and from the above, it can be estimated from below by the quadratic form given by the matrix
\begin{align}
A=\begin{pmatrix} (12\lambda_i-45)\lambda_i(\lambda_i-4) & -12\lambda_i(\lambda_i-4) & 0 \\
-12\lambda_i(\lambda_i-4) & 2\lambda_i^2+\frac{25}{2}\lambda_i & 20\lambda_i \\
0 & 20\lambda_i & 20\lambda_i+245
\end{pmatrix}.
\end{align}
Using $\lambda_i\geq 4$ for $i\geq2$, it can be straightforwardly shown that this matrix is positive definite. In the case $\lambda_1=0$, it is clearly positive semidefinite.
For general $\varphi,\psi,\chi\in C^{\infty}_{cs}((0,\infty))$ with support contained in $[a,b]$, we may use a decomposition into a basis of $L^2([a,b])$ which is  orthnormal with respect to the scalar product $\langle \varphi,\psi\rangle=\int_a^b \varphi\psi r^{n-2} dr$. This shows that $\tilde{\Delta}_E$ is nonnegative on all of $V_{4,i}$.
\subsection{The hyperbolic cone}
Finally, we prove Theorem \ref{thmhyperboliccone}. As in the previous sections, we first need to compute some infima.
\begin{lem}\label{infima_2}We have
\begin{equation}
\begin{aligned}
\inf_{\varphi\in C^{\infty}_{cs}((0,\infty))}\frac{\int_{0}^{\infty} (\varphi')^2\sinh^ndr}{\int_{0}^{\infty} \varphi^2\sinh^{n-2}dr}&=
\inf_{\varphi\in C^{\infty}_{cs}((0,\infty))}\frac{\int_{0}^{\infty} (\varphi')^2\sinh^ndr}{\int_{0}^{\infty} \varphi^2\cosh^2\sinh^{n-2}dr}&=\frac{(n-1)^2}{4}.
\end{aligned}
\end{equation}
\end{lem}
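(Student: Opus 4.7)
The plan is to establish both inequalities simultaneously by a single substitution of the form $\varphi=\sinh^{-(n-1)/2}\psi$, which turns $(\varphi')^2\sinh^n$ into a square plus a cross term; an integration by parts on the cross term then produces a clean identity. The matching upper bound will come from concentrating test functions near $r=0$, where $\sinh r\approx r$ and $\cosh r\approx 1$, so that both quotients reduce to the classical Hardy quotient whose value $(n-1)^2/4$ is already recorded in \eqref{Hardy}.

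For the lower bound I would compute directly: with $\psi\in C^\infty_{cs}((0,\infty))$ and $\varphi=\sinh^{-(n-1)/2}\psi$, one finds
\begin{equation*}
(\varphi')^2\sinh^n=\frac{(n-1)^2}{4}(\sinh+\sinh^{-1})\psi^2-(n-1)\cosh\,\psi\psi'+\sinh\,(\psi')^2,
\end{equation*}
using $\cosh^2=1+\sinh^2$. Integrating and handling the middle term via $-(n-1)\int\cosh\,\psi\psi'\,dr=\frac{n-1}{2}\int\sinh\,\psi^2\,dr$ (the boundary contributions vanish because $\psi$ is compactly supported in $(0,\infty)$), and then re-expressing $\psi^2=\sinh^{n-1}\varphi^2$, yields the identity
\begin{equation*}
\int_0^\infty (\varphi')^2\sinh^n\,dr=\frac{(n-1)^2}{4}\int_0^\infty \varphi^2\sinh^{n-2}\,dr+\frac{(n-1)(n+1)}{4}\int_0^\infty\varphi^2\sinh^n\,dr+\int_0^\infty\sinh\,(\psi')^2\,dr.
\end{equation*}
Discarding the last two (nonnegative) terms gives the lower bound for the first infimum. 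Since $\cosh^2\sinh^{n-2}=\sinh^{n-2}+\sinh^n$, subtracting $\frac{(n-1)^2}{4}\int\varphi^2\cosh^2\sinh^{n-2}$ from both sides absorbs an additional $\frac{(n-1)^2}{4}\int\varphi^2\sinh^n$ into the middle term, leaving $\frac{n-1}{2}\int\varphi^2\sinh^n+\int\sinh\,(\psi')^2\ge 0$; this gives the lower bound for the second infimum.

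For the upper bound I would test both quotients with $\varphi_\epsilon(r):=\varphi_0(r/\epsilon)$, where $\varphi_0\in C^\infty_{cs}((0,\infty))$ is arbitrary. Substituting $t=r/\epsilon$ and using $\sinh(\epsilon t)=\epsilon t(1+O(\epsilon^2))$ and $\cosh(\epsilon t)=1+O(\epsilon^2)$ uniformly on $\mathrm{supp}(\varphi_0)$, one checks that both quotients converge as $\epsilon\to 0$ to the scale-invariant Hardy quotient $\int_0^\infty(\varphi_0')^2t^n\,dt/\int_0^\infty\varphi_0^2 t^{n-2}\,dt$. Choosing $\varphi_0$ to be a near-extremizer of the Hardy inequality \eqref{Hardy} then shows the infima are at most $(n-1)^2/4$, completing the proof. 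The only subtle step is guessing the correct conjugation weight $\sinh^{-(n-1)/2}$; once that is made, the identity $\cosh^2=1+\sinh^2$ makes the two bounds fall out of a single calculation, so no real obstacle remains.
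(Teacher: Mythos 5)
Your proof is correct; I checked the conjugation identity and it holds exactly as you state it: with $\varphi=\sinh^{-(n-1)/2}\psi$ one gets $(\varphi')^2\sinh^n=\frac{(n-1)^2}{4}(\sinh+\sinh^{-1})\psi^2-(n-1)\cosh\,\psi\psi'+\sinh\,(\psi')^2$, the cross term integrates to $\frac{n-1}{2}\int\sinh\,\psi^2\,dr$ with no boundary contributions, and $\frac{(n-1)^2}{4}+\frac{n-1}{2}=\frac{(n-1)(n+1)}{4}$, which is exactly the coefficient needed so that subtracting $\frac{(n-1)^2}{4}\int\varphi^2\cosh^2\sinh^{n-2}$ leaves the nonnegative remainder $\frac{n-1}{2}\int\varphi^2\sinh^n+\int\sinh(\psi')^2$. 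Your route differs from the paper's in the lower bounds: the paper first substitutes $s=\sinh r$, proves the first lower bound by the crude pointwise comparisons $(1+s^2)^{1/2}\geq1$ and $(1+s^2)^{-1/2}\leq1$ against the Hardy quotient \eqref{Hardy}, and only for the second infimum introduces a conjugation, namely $\psi=\varphi(1+s^2)^{1/4}$ in the $s$-variable, followed by an integration by parts to reduce again to Hardy. Your single conjugation $\varphi=\sinh^{-(n-1)/2}\psi$ in the original variable produces an exact identity from which both lower bounds drop out simultaneously, and it additionally exhibits the extra positive term $\frac{(n-1)(n+1)}{4}\int\varphi^2\sinh^n$, which is essentially the spectral-gap information the paper extracts separately elsewhere; the paper's version, on the other hand, keeps the two infima logically independent and reuses the Hardy inequality as a black box each time. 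The upper bound via rescaled test functions concentrating at $r=0$ is the same in both arguments. No gaps.
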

\begin{proof}
By substituting $s=\sinh(r)$, we obtain
\begin{align}
A:=\inf_{\varphi\in C^{\infty}_{cs}((0,\infty))}\frac{\int_{0}^{\infty} (\partial_r\varphi)^2\sinh^ndr}{\int_{0}^{\infty} \varphi^2\sinh^{n-2}dr}=\inf_{\varphi\in C^{\infty}_{cs}((0,\infty))}\frac{\int_{0}^{\infty} (\partial_s\varphi)^2(1+s^2)^{1/2}s^nds}{\int_{0}^{\infty} \varphi^2(1+s^2)^{-1/2}s^{n-2}ds},
\end{align}
which yields the lower estimate
\begin{align}
A\geq \inf_{\varphi\in C^{\infty}_{cs}((0,\infty))}\frac{\int_{0}^{\infty} (\partial_s\varphi)^2s^nds}{\int_{0}^{\infty} \varphi^2s^{n-2}ds}=\frac{(n-1)^2}{4}.
\end{align}
On the other hand we have $(1+s^2)^{1/2}\leq 1+s$ and $(1+s^2)^{-1/2}=1+s^2F(s)$ where $F(s)$ is smooth and bounded for all $s\geq0$. Let $\varphi_k$ be a sequence of compactly supported smooth functions such that
\begin{align}
\frac{\int_{0}^{\infty} (\partial_s\varphi_k)^2s^nds}{\int_{0}^{\infty} \varphi_k^2s^{n-2}ds}\overset{k\to\infty}{\to} \frac{(n-1)^2}{4}.
\end{align}
Because the quotient is invariant under rescaling $s\mapsto \alpha s$, we may assume that $\supp(\varphi_k)\subset (0,\epsilon_k)$ with $\epsilon_k\to 0$.
Thus,
\begin{align}
\frac{\int_{0}^{\infty}  (\partial_s\varphi_k)^2(1+s^2)^{1/2}s^nds}{\int_{0}^{\infty}  (\varphi_k)^2(1+s^2)^{-1/2}s^{n-2}ds}\leq \frac{\int_{0}^{\epsilon_k}  (\partial_s\varphi_k)^2(1+s)s^nds}{\int_{0}^{\epsilon_k} \varphi_k^2(1+s^2\cdot F(s))s^{n-2}ds}\leq \frac{(n-1)^2}{4}+\delta_k
\end{align}
and $\delta_k\to 0$ as $k\to\infty$. This proves that the first infiumum in the statement equals $1/4\cdot(n-1)^2$. Now we consider the other one. At first, it is clear that
\begin{align}
\inf_{\varphi\in C^{\infty}_{cs}((0,\infty))}\frac{\int_{0}^{\infty} (\partial_r\varphi)^2\sinh^ndr}{\int_{0}^{\infty} \varphi^2\cosh^2\sinh^{n-2}dr}\leq \inf_{\varphi\in C^{\infty}_{cs}((0,\infty))}\frac{\int_{0}^{\infty} (\partial_r\varphi)^2\sinh^ndr}{\int_{0}^{\infty} \varphi^2\sinh^{n-2}dr}&=\frac{(n-1)^2}{4}.
\end{align}
To prove the converse inequality, we first substitute $s=\sinh(r)$ which yields
\begin{align}
\inf_{\varphi\in C^{\infty}_{cs}((0,\infty))}\frac{\int_{0}^{\infty} (\partial_r\varphi)^2\sinh^ndr}{\int_{0}^{\infty} \varphi^2\cosh^2\sinh^{n-2}dr}=\inf_{\varphi\in C^{\infty}_{cs}((0,\infty))}\frac{\int_{0}^{\infty} (\partial_s\varphi)^2(1+s^2)^{1/2}s^nds}{\int_{0}^{\infty} \varphi^2(1+s^2)^{1/2}s^{n-2}ds}.
\end{align}
Now we substitute $\psi(s)=\varphi(s)\cdot (1+s^2)^{1/4}$. Then by straightforward calculations,
\begin{align}
\int_{0}^{\infty} \varphi^2(1+s^2)^{1/2}s^{n-2}ds=\int_{0}^{\infty} \psi^2s^{n-2}ds
\end{align}
and
\begin{equation}
\begin{aligned}
\int_{0}^{\infty} (\varphi')^2(1+s^2)^{1/2}s^nds=&
\int_{0}^{\infty} (\psi')^2s^nds+\frac{1}{4}\int_{0}^{\infty}  \psi^2 (1+s^2)^{-2}s^{n+2}ds\\&
-\int_{0}^{\infty} \psi'\psi(1+s^2)^{-1}s^n ds\\
=&\int_{0}^{\infty} (\psi')^2s^nds+\frac{1}{4}\int_{0}^{\infty} \psi^2(1+s^2)^{-2}s^{n+2}ds\\&
+\frac{1}{2}\int_{0}^{\infty} \psi^2(1+s^2)^{-2}[(n-2)s^{n+1}+ns^{n-1}] ds\\
\geq &\int_{0}^{\infty} (\psi')^2s^nds,
\end{aligned}
\end{equation}
where we used integration by parts for the second equality. Thus,
\begin{align}
\inf_{\varphi\in C^{\infty}_{cs}((0,\infty))}\frac{\int_{0}^{\infty} (\varphi')^2(1+s^2)^{1/2}s^nds}{\int_{0}^{\infty} \varphi^2(1+s^2)^{1/2}s^{n-2}ds}\geq \inf_{\psi\in C^{\infty}_{cs}((0,\infty))}\frac{\int_{0}^{\infty} (\psi')^2s^nds}{\int_{0}^{\infty} \psi^2s^{n-2}ds}=\frac{(n-1)^2}{4},
\end{align}
which proves the second formula.
\end{proof}
From now on, we can proceed similarly as in the previous subsections.
 We study the Einstein operator as a quadratic form on the subspaces $V_{k,i}$. Let
 \begin{align}
 \tilde{h}=\varphi r^2h_i\in V_{1,i}.
 \end{align}
 Then by Lemma \ref{infima_2},
\begin{equation}
\begin{aligned}
 (\tilde{\Delta}_E\tilde{h},\tilde{h})_{L^2(\tilde{g})}&=\int_{0}^{\infty}(\varphi')^2\sinh^n dr
 +\kappa_i\int_{0}^{\infty}\varphi^2\sinh^{n-2}dr\\
 &\geq\frac{(n-1)^2}{4}\int_{0}^{\infty}\varphi^2\cosh^2\sinh^{n-2}dr+\kappa_i\int_{0}^{\infty}\varphi^2\sinh^{n-2}dr\\ 
 &=\frac{(n-1)^2}{4}\left\|\tilde{h}\right\|^2_{L^2(\tilde{g})}+\left[\frac{(n-1)^2}{4}+\kappa_i\right]\int_{0}^{\infty}\varphi^2\sinh^{n-2}dr .
\end{aligned}
\end{equation}
 and thus, the quadratic form is positive definite if $\kappa_i\geq -\frac{1}{4}(n-1)^2$. On the other hand, since the inequality 
 \begin{align}
 (\tilde{\Delta}_E\tilde{h},\tilde{h})_{L^2(\tilde{g})}&\geq\frac{(n-1)^2}{4}\int_{0}^{\infty}\varphi^2\sinh^n dr
 +\kappa_i\int_{0}^{\infty}\varphi^2\sinh^{n-2}dr
 \end{align}
 is optimal, the scalar product can be negative for an appropriate $\varphi$ if $\kappa_i< -\frac{1}{4}(n-1)^2$. For
 \begin{align}
 \tilde{h}=\varphi v_i\tilde{g}\in V_{2,i},
 \end{align}
 we have
\begin{equation}
\begin{aligned}
 (\tilde{\Delta}_E\tilde{h},\tilde{h})_{L^2(\tilde{g})}&=
(n+1)\int_{0}^{\infty}(\varphi')^2\sinh^{n}dr+
(n+1)\lambda_i\int_{0}^{\infty}\varphi^2\sinh^{n-2}dr\\&\qquad+2n(n+1)\int_0^{\infty}\varphi^2\sinh^ndr\geq2n\left\|\tilde{h}\right\|^2_{L^2(\tilde{g})}.
\end{aligned}
\end{equation}
 Next, pick
\begin{align}
 \tilde{h}= \tilde{h}_1+ \tilde{h}_2=\varphi f^2\delta^*\omega_i+
\psi \cdot dr\odot f\omega_i \in V_{3,i}.
\end{align}
Then we have the scalar products
\begin{equation}
\begin{aligned}
 (\tilde{\Delta}_E\tilde{h}_1,\tilde{h}_1)_{L^2(\tilde{g})}&=
 \frac{1}{2}(\mu_i-(n-1))\int_{0}^{\infty}(\varphi')^2\sinh^{n} dr\\
 &\qquad+ \frac{1}{2}(\mu_i-(n-1))^2\int_{0}^{\infty}\varphi^2\sinh^{n-2}dr,\\
(\tilde{\Delta}_E\tilde{h}_2,\tilde{h}_2)_{L^2(\tilde{g})}&=2\mu_i\int_{0}^{\infty}\psi^2\sinh^{n-2}dr
 +(2n+6)\int_{0}^{\infty}\psi^2\cosh^2\sinh^{n-2}dr\\
&\qquad +2\int_{0}^{\infty}(\psi')^2\sinh^{n}dr-4\int_0^{\infty}\psi^2sinh^ndr,\\
 (\tilde{\Delta}_E\tilde{h}_1,\tilde{h}_2)_{L^2(\tilde{g})}&=-2(\mu_i-(n-1))\int_{0}^{\infty}\varphi\psi \cosh\sinh^{n-2}dr
\end{aligned}
\end{equation}
 and the estimates
\begin{equation}
\begin{aligned}
  (\tilde{\Delta}_E\tilde{h}_1,\tilde{h}_1)_{L^2(\tilde{g})}&\geq
 \frac{(n-1)^2}{8}(\mu_i-(n-1))\int_{0}^{\infty}\varphi^2\cosh^2\sinh^{n-2} dr\\
& \qquad+ \frac{1}{2}(\mu_i-(n-1))^2\int_{0}^{\infty}\varphi^2\sinh^{n-2}dr,\\
(\tilde{\Delta}_E\tilde{h}_2,\tilde{h}_2)_{L^2(\tilde{g})}&\geq 2\mu_i\int_{0}^{\infty}\psi^2\sinh^{n-2}dr
 +2(n+1)\int_{0}^{\infty}\psi^2\cosh^2\sinh^{n-2}dr\\ 
 &\qquad+4\int_{0}^{\infty}\psi^2\sinh^{n-2}dr+\frac{(n-1)^2}{2}\int_{0}^{\infty}\psi^2\cosh^2\sinh^{n-2}dr,\\
 2| (\tilde{\Delta}_E\tilde{h}_1,\tilde{h}_2)_{L^2(\tilde{g})} |&\leq \frac{1}{2}(\mu_i-(n-1))^2\int_{0}^{\infty}\varphi^2\sinh^{n-2}dr\\ 
 &\qquad+8\int_{0}^{\infty}\psi^2\cosh^2\sinh^{n-2}dr,
\end{aligned}
\end{equation}
 which imply that
\begin{equation}
\begin{aligned}
  (\tilde{\Delta}_E(\tilde{h}_1+\tilde{h}_2),\tilde{h}_1+\tilde{h}_2)_{L^2(\tilde{g})}&\geq \frac{(n-1)^2}{8}(\mu_i-(n-1))\int_{0}^{\infty}\varphi^2\cosh^2\sinh^{n-2} dr\\ &\qquad +\frac{(n-1)^2}{2}\int_{0}^{\infty}\psi^2\cosh^2\sinh^{n-2}dr\\
  &\geq \frac{(n-1)^2}{4}\left(\left\|\tilde{h}_1\right\|^2_{L^2(\tilde{g})}+\left\|\tilde{h}_2\right\|^2_{L^2(\tilde{g})}\right)=\frac{(n-1)^2}{4}\left\|\tilde{h}\right\|^2_{L^2(\tilde{g})}.
\end{aligned}
\end{equation}
   It now just remains to consider the spaces $V_{4,i}$. Let
 \begin{align}
  \tilde{h}= \tilde{h}_1+ \tilde{h}_2+\tilde{h}_3=\varphi f^2(n\nabla^2 v_i+\Delta v_i\cdot g)+\psi \cdot dr\odot \nabla v_i+\chi\cdot v_i(f^2g-ndr\otimes dr)\in V_{4,i}
 \end{align}
We have the scalar products 
\begin{equation}
\begin{aligned}
    (\tilde{\Delta}_E\tilde{h}_1,\tilde{h}_1)_{L^2(\tilde{g})}&= (n-1)n\lambda_i(\lambda_i-n)\int_0^{\infty}(\varphi')^2\sinh^n dr \\
 &\qquad+n(n-1)\lambda_i(\lambda_i-n)(\lambda_i-2(n-1))\int_0^{\infty}\varphi^2\sinh^{n-2}dr,\\
  (\tilde{\Delta}_E\tilde{h}_2,\tilde{h}_2)_{L^2(\tilde{g})}&=(2n+6)\lambda_i\int_{0}^{\infty} \psi^2\cosh^2\sinh^{n-2}dr+2\lambda_i\int_{0}^{\infty} (\psi')^2\sinh^ndr\\&\qquad+2\lambda_i\left(\lambda_i-(n-1)\right)\int_{0}^{\infty} \psi^2\sinh^{n-2}dr
  -4\lambda_i\int_0^{\infty}\psi^2\sinh^ndr,
\\
  (\tilde{\Delta}_E\tilde{h}_3,\tilde{h}_3)_{L^2(\tilde{g})}&= 
  n((n+1)\lambda_i-2(n-1))\int_{0}^{\infty} \chi^2 \sinh^{n-2}dr \\
  &\qquad
  +(n+1)n\int_{0}^{\infty}(\chi')^2\sinh^ndr
  +2n^2(n+3)\int_{0}^{\infty} \chi^2\cosh^2\sinh^{n-2}dr\\
  &\qquad-4n^2\int_0^{\infty}\varphi^2\sinh^ndr
\end{aligned}
\end{equation}
  and
\begin{equation}
\begin{aligned}
(\tilde{\Delta}_E\tilde{h}_1,\tilde{h}_2)_{L^2(\tilde{g})}&= 
-4(n-1)\lambda_i(\lambda_i-n)\int_0^{\infty}\varphi\psi\cosh \sinh^{n-2}dr,\\
(\tilde{\Delta}_E\tilde{h}_2,\tilde{h}_3)_{L^2(\tilde{g})}&= 
4(n+1)\lambda_i\int_0^{\infty}\psi\chi \cosh \sinh^{n-2}dr.
\end{aligned}
\end{equation} 
   By Lemma \ref{infima_2}, we have lower estimates
\begin{equation}
\begin{aligned}    (\tilde{\Delta}_E\tilde{h}_1,\tilde{h}_1)_{L^2(\tilde{g})}&\geq (n-1)n\lambda_i(\lambda_i-n)^2\int_0^{\infty}\varphi^2\sinh^{n-2} dr \\
 &\qquad+\frac{1}{4}n(n-1)(n-3)^2\lambda_i(\lambda_i-n)\int_0^{\infty}\varphi^2\cosh^2\sinh^{n-2}dr\\&\qquad+n(n-1)(n-2)\lambda_i(\lambda_i-n)\int_0^{\infty}\varphi^2\sinh^{n}dr,\\
   (\tilde{\Delta}_E\tilde{h}_2,\tilde{h}_2)_{L^2(\tilde{g})}&\geq(2n+2+\frac{1}{2}(n-1)^2)\lambda_i\int_0^{\infty} \psi^2\cosh^2\sinh^{n-2}dr\\
   &\qquad+6\lambda_i\int_0^{\infty}\psi^2\sinh^{n-2}dr
   +2\lambda_i(\lambda_i-n)\int_0^{\infty} \psi^2\sinh^{n-2}dr,\\
     (\tilde{\Delta}_E\tilde{h}_3,\tilde{h}_3)_{L^2(\tilde{g})}&\geq
  n(n+1)\lambda_i\int_0^{\infty} \chi^2 \sinh^{n-2}dr
  +2n(n+1)\int_0^{\infty} \chi^2 \sinh^{n-2}dr\\
  &\qquad+\left[\frac{1}{4}(n+1)n(n-1)^2+2n^2(n+1)\right]\int_0^{\infty}\chi^2\cosh^2\sinh^{n-2}dr,
\end{aligned}
\end{equation}  
and for the off-diagonal terms, we use the Young inequality to show
\begin{equation}
\begin{aligned}
  2|(\tilde{\Delta}_E\tilde{h}_1,\tilde{h}_2)_{L^2(\tilde{g})}|&\leq
  2\alpha (n-1)\lambda_i(\lambda_i-n)\int_0^{\infty}|\varphi\psi| \cosh\sinh^{n-2}dr\\&\qquad+
  2(4-\alpha)(n-1)\lambda_i(\lambda_i-n)\int_0^{\infty}|\varphi\psi| \cosh\sinh^{n-2}dr\\
  &\leq n(n-1)\lambda_i(\lambda_i-n)^2\int_0^{\infty}\varphi^2\sinh^{n-2}dr\\
&\qquad  + \alpha^2\frac{n-1}{n}\lambda_i\int_0^{\infty}\psi^2\cosh^2\sinh^{n-2}dr\\
  &\qquad+\frac{1}{4}n(n-1)(n-3)^2\lambda_i(\lambda_i-n)\int_0^{\infty}\varphi^2\cosh^2\sinh^{n-2}dr\\
  &\qquad+ (4-\alpha)^2\frac{4(n-1)}{n(n-3)^2}\lambda_i(\lambda_i-n)\int_0^{\infty}\psi^2\sinh^{n-2}dr,\\
   2|(\tilde{\Delta}_E\tilde{h}_2,\tilde{h}_3)_{L^2(\tilde{g})}|&\leq 
   16\frac{n+1}{n}\lambda_i \int_0^{\infty}\psi^2\cosh^2\sinh^{n-2}dr\\
& \qquad  +n(n+1)\lambda_i\int_0^{\infty}\chi^2\sinh^{n-2}dr,
\end{aligned}
\end{equation}
where $\alpha\in[0,4]$ is some parameter which we choose later. We get
\begin{equation}
\begin{aligned}
(\tilde{\Delta}_E&(\tilde{h}_1+\tilde{h}_2+\tilde{h}_3),\tilde{h}_1+\tilde{h}_2+\tilde{h}_3)_{L^2(\tilde{g})}\\ &\geq
n(n-1)(n-2)\lambda_i(\lambda_i-n)\int_0^{\infty}\varphi^2\sinh^{n}dr\\ &\qquad+
 \left[2n+2+\frac{1}{2}(n-1)^2-16\frac{n+1}{n}-\alpha^2\frac{n-1}{n}\right]\lambda_i\int_0^{\infty} \psi^2\cosh^2\sinh^{n-2}dr\\
&\qquad+\left[2-(4-\alpha)^2\frac{4(n-1)}{n(n-3)^2}\right]\lambda_i(\lambda_i-n)\int_0^{\infty} \psi^2\cosh^2\sinh^{n-2}dr\\
&\qquad+\left[\frac{1}{4}(n+1)n(n-1)^2+2n^2(n+1)\right]\int_0^{\infty}\chi^2\cosh^2\sinh^{n-2}dr,
\end{aligned}
\end{equation} 
and if $n\geq 6$ we can choose $\alpha=2$ to get the second and the third term of the right hand side positive. In this case, we therefore get
\begin{equation}
\begin{aligned}
(\tilde{\Delta}_E(\tilde{h}_1+\tilde{h}_2+\tilde{h}_3),\tilde{h}_1+\tilde{h}_2+\tilde{h}_3)_{L^2(\tilde{g})}&\geq C(n)\left(\left\|\tilde{h}_1\right\|^2_{L^2(\tilde{g})}+\left\|\tilde{h}_2\right\|^2_{L^2(\tilde{g})}+\left\|\tilde{h}_3\right\|^2_{L^2(\tilde{g})}\right)\\&=C(n)\left\|\tilde{h}\right\|^2_{L^2(\tilde{g})}.
\end{aligned}
\end{equation} 
For $n=4$ and $n=5$, we cannot choose such an $\alpha$, so we have to treat these cases separately. Let us consider the five-dimensional case first. We then have the estimates
\begin{equation}
\begin{aligned}    (\tilde{\Delta}_E\tilde{h}_1,\tilde{h}_1)_{L^2(\tilde{g})}&\geq 20\lambda_i(\lambda_i-5)^2\int_0^{\infty}\varphi^2\sinh^{3} dr \\
 &\qquad+20\lambda_i(\lambda_i-5)\int_0^{\infty}\varphi^2\cosh^2\sinh^{3}dr,\\
   (\tilde{\Delta}_E\tilde{h}_2,\tilde{h}_2)_{L^2(\tilde{g})}&\geq20\lambda_i\int_0^{\infty} \psi^2\cosh^2\sinh^{3}dr+2\lambda_i(\lambda_i-2)\int_0^{\infty} \psi^2\sinh^{3}dr,\\
     (\tilde{\Delta}_E\tilde{h}_3,\tilde{h}_3)_{L^2(\tilde{g})}&\geq
  (30\lambda_i+60)\int_0^{\infty} \chi^2 \sinh^{3}dr+396\int_0^{\infty}\chi^2\cosh^2\sinh^{3}dr.
\end{aligned}
\end{equation} 
  We have two terms on the right hand sides, where we denote the coefficients in front of the first integrals by $a_{ii}$ and the coefficients in front of the second integrals by $b_{ii}$, $i=1,2,3$.
  The off-diagonal terms are
\begin{equation}
\begin{aligned}
(\tilde{\Delta}_E\tilde{h}_1,\tilde{h}_2)_{L^2(\tilde{g})}&= 
-16\lambda_i(\lambda_i-5)\int_0^{\infty}\varphi\psi\cosh \sinh^{3}dr,\\
(\tilde{\Delta}_E\tilde{h}_2,\tilde{h}_3)_{L^2(\tilde{g})}&= 
24\lambda_i\int_0^{\infty}\psi\chi \cosh \sinh^{3}dr.
\end{aligned}
\end{equation} 
  Let $a_{12}=a_{21}=-16\lambda_i(\lambda_i-5)$ and $a_{23}=a_{32}=b_{23}=b_{32}=12\lambda_i$. Then $\tilde{\Delta}_E$ on $V_{4,i}$ can be estimated from below by the sum of two quadratic forms, given by the matrices $A=(a_{ij})_{1\leq i,j\leq 3}$ and $B=(b_{ij})_{1\leq i,j\leq 3}$. Assume at first that $\varphi=\chi=\cosh(r)\cdot\psi$ and that $\int_0^{\infty}\varphi^2\sinh^3dr=1$. On the space
  \begin{align}
 \text{span}\left\{\varphi f^2(n\nabla^2 v_i+\Delta v_i\cdot g),\cosh\varphi \cdot dr\odot \nabla v_i,\varphi\cdot v_i(f^2g-ndr\otimes dr)\right\},
\end{align}
$A$ can be represented as
\begin{align}
\begin{pmatrix}20\lambda_i(\lambda_i-5)^2 & -16\lambda_i(\lambda_i-5) & 0\\
-16\lambda_i(\lambda_i-5) & 20\lambda_i & 12\lambda_i \\
0 & 12\lambda_i & 30\lambda_i+60 
\end{pmatrix},
\end{align}
which is positive definite since $\lambda_i\geq5$ for $i\geq 2$. The case $\lambda_1=0$ is clear. For $\varphi,\psi,\chi\in C^{\infty}_{cs}((0,\infty))$ arbitrarily, we can use a development to an orthonormal basis of an inner product which is given by $\langle \varphi,\psi\rangle =\int_0^{\infty}\varphi\cdot \psi\sinh^3dr$ to show that $A$ is positive definite on all of $V_{4,i}$. Simlarly, we treat $B$. Choose $\psi=\cosh(r)\cdot\varphi=\cosh(r)\cdot\chi$ such that $\int_0^{\infty}\psi^2\sinh^3dr=1$. On
\begin{align}
 \text{span}\left\{\cosh\varphi f^2(n\nabla^2 v_i+\Delta v_i\cdot g),\varphi \cdot dr\odot \nabla v_i,\cosh\varphi\cdot v_i(f^2g-ndr\otimes dr)\right\},
\end{align}
$B$ is given by
\begin{align}
\begin{pmatrix}20\lambda_i(\lambda_i-5) & 0 & 0\\
0 & 2\lambda_i(\lambda_i-2) & 12\lambda_i \\
0 & 12\lambda_i & 396 
\end{pmatrix},
\end{align}
which is also positive definite and as above, this shows that $B$ is positive definite on all of $V_{4,i}$. By making the diagonal elements of $A,B$ slightly smaller (which does not affect definiteness), we also get an estimate 
\begin{align}
(\tilde{\Delta}_E(\tilde{h}_1+\tilde{h}_2+\tilde{h}_3),\tilde{h}_1+\tilde{h}_2+\tilde{h}_3)_{L^2(\tilde{g})}&\geq C(5)\left\|\tilde{h}\right\|^2_{L^2(\tilde{g})}.
\end{align}
The four-dimensional case is treated analogously. In this case, we have lower estimates
\begin{equation}
\begin{aligned}   (\tilde{\Delta}_E\tilde{h}_1,\tilde{h}_1)_{L^2(\tilde{g})}&\geq 12\lambda_i(\lambda_i-4)^2\int_0^{\infty}\varphi^2\sinh^{2} dr \\
 &\qquad+3\lambda_i(\lambda_i-4)\int_0^{\infty}\varphi^2\cosh^2\sinh^{2}dr,\\
   (\tilde{\Delta}_E\tilde{h}_2,\tilde{h}_2)_{L^2(\tilde{g})}&\geq\frac{29}{2}\lambda_i\int_0^{\infty} \psi^2\cosh^2\sinh^{2}dr+2\lambda_i(\lambda_i-1)\int_0^{\infty} \psi^2\sinh^{2}dr,\\
     (\tilde{\Delta}_E\tilde{h}_3,\tilde{h}_3)_{L^2(\tilde{g})}&\geq
  (20\lambda_i+40)\int_0^{\infty} \chi^2 \sinh^{2}dr+205\int_0^{\infty}\chi^2\cosh^2\sinh^{2}dr
\end{aligned}
\end{equation} 
  and the off-diagonal terms are
\begin{equation}
\begin{aligned}
(\tilde{\Delta}_E\tilde{h}_1,\tilde{h}_2)_{L^2(\tilde{g})}&= 
-12\lambda_i(\lambda_i-4)\int_0^{\infty}\varphi\psi\cosh \sinh^{2}dr,\\
(\tilde{\Delta}_E\tilde{h}_2,\tilde{h}_3)_{L^2(\tilde{g})}&= 
20\lambda_i\int_0^{\infty}\psi\chi \cosh \sinh^{2}dr.
\end{aligned}
\end{equation} 
  One can estimate $\tilde{\Delta}_E$ from below by the sum of two quadratic forms $A$ and $B$, which can be represented by the matrices
  \begin{align}
 \begin{pmatrix}12\lambda_i(\lambda_i-4)^2 & -12\lambda_i(\lambda_i-4) & 0\\
-12\lambda_i(\lambda_i-4) & \frac{29}{2}\lambda_i & 5\lambda_i \\
0 & 5\lambda_i & 20\lambda_i+40
\end{pmatrix} ,
\quad
\begin{pmatrix}3\lambda_i(\lambda_i-4) & 0 & 0\\
0 & 2\lambda_i(\lambda_i-1) & 15\lambda_i \\
0 & 15\lambda_i & 205 
\end{pmatrix}.
  \end{align}
  They are both positive definite for $i\geq2$ since $\lambda_i\geq 4$. The case $\lambda_i=0$ is again clear.
By the same arguments as above we also get for $n=4$ the estimate
  \begin{align}
(\tilde{\Delta}_E(\tilde{h}_1+\tilde{h}_2+\tilde{h}_3),\tilde{h}_1+\tilde{h}_2+\tilde{h}_3)_{L^2(\tilde{g})}&\geq C(4)\left\|\tilde{h}\right\|^2_{L^2(\tilde{g})}.
\end{align}
\section{Examples}\label{section4}
In this section, we want to apply the above theorems do deduce stability or instability for a large class of warped product Einstein manifolds.

\begin{thm}
Let $(\widetilde{M},\tilde{g})$ be a complete Riemannian spin manifold carrying an imaginary Killing spinor. Then it is a strictly stable Einstein manifold of negative scalar curvature. 
\end{thm}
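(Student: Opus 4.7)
The plan is to reduce the theorem to Theorem \ref{thmexponentialcone} via Baum's classification of complete Riemannian spin manifolds admitting imaginary Killing spinors. First I would invoke Baum's result, which states that after passing to the universal cover (the stability condition \eqref{strictlystable} is inherited from the universal cover since we test against compactly supported sections and can cut off by a partition of unity) a complete simply connected Riemannian spin manifold carrying an imaginary Killing spinor is isometric to a warped product
\begin{equation*}
(\widetilde{M},\tilde{g}) = (\R \times M,\, dr^2 + e^{2r} g),
\end{equation*}
where $(M,g)$ is a complete Ricci-flat Riemannian spin manifold carrying a (nonzero) parallel spinor. In particular, $\tilde{g}$ is Einstein with $\scal_{\tilde{g}} = -n(n+1) < 0$, which already takes care of the sign of the scalar curvature.

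Next I would argue that the base $(M,g)$ is stable as a Ricci-flat manifold. This is the content of a theorem of Dai-Wang-Wei (and, in the compact case, X.\ Wang): if a spin manifold admits a nonzero parallel spinor then its Einstein operator is nonnegative on $L^2$, i.e.\ $\int_M \langle \Delta_E h,h\rangle\,dV \geq 0$ for every compactly supported symmetric $2$-tensor $h$. I would cite this directly, as it applies verbatim in the noncompact complete setting. Consequently $(M,g)$ is stable in the sense of \eqref{strictlystable} with $C=0$.

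Having identified $(\widetilde{M},\tilde{g})$ as the warped product of Theorem \ref{thmexponentialcone} over a stable Ricci-flat base, the "in this case" clause of that theorem delivers strict stability of $(\widetilde{M},\tilde{g})$ at once: the uniform lower bound $\frac{n^2}{4}$ on $\tilde{\Delta}_E$ obtained in the proof of Theorem \ref{thmexponentialcone} on each of the subspaces $V_{k,i}$ with $k \geq 2$, together with the lower bound $\kappa_i + \frac{n^2}{4} \geq \frac{n^2}{4}$ on $V_{1,i}$ coming from $\kappa_i \geq 0$, yields $(\tilde{\Delta}_E \tilde{h},\tilde{h})_{L^2(\tilde{g})} \geq C \|\tilde{h}\|_{L^2(\tilde{g})}^2$ for all compactly supported $\tilde{h}$, which is stronger than what is required in \eqref{strictlystable}.

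The only nontrivial step is the invocation of Baum's classification; once the warped product structure is given, everything else is a direct combination of an existing parallel-spinor stability result on the base and Theorem \ref{thmexponentialcone}. The main subtlety is making sure Baum's theorem applies in the possibly non-simply-connected case: the imaginary Killing spinor lifts to the universal cover, the universal cover is then the above warped product, and since strict stability passes from a Riemannian covering to its quotient (the stability inequality \eqref{strictlystable} can be checked on lifts of compactly supported sections), the conclusion descends to $(\widetilde{M},\tilde{g})$ itself.
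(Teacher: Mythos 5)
Your proposal is correct and follows essentially the same route as the paper: Baum's classification of complete manifolds with imaginary Killing spinors as warped products $dr^2+e^{4\lambda r}g$ over Ricci-flat bases carrying parallel spinors (normalized to $\lambda=1/2$ by rescaling), stability of the base via Dai--Wang--Wei, and then the ``in this case'' clause of Theorem \ref{thmexponentialcone}. The only divergence is your detour through the universal cover, which is unnecessary --- Baum's theorem already applies to complete connected (not necessarily simply connected) manifolds, which is just as well since your descent argument via ``lifts of compactly supported sections'' would require an additional cutoff argument for infinite covers.
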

\begin{proof}Recall that a spinor $\sigma$ is called an imaginary Killing spinor if there exists a $\lambda\in \R$ such that $\nabla_X\sigma=i\lambda X\cdot \sigma$ for any vector field $X$. The constant $\lambda$ is called the Killing number. It is well known that such a spinor forces the underlying metric to be Einstein with scalar curvature $-4\lambda^2n(n-1)$. By a classification result of Helga Baum \cite{Bau89}, any complete manifold carrying an imaginary Killing spinor with Killing number $\lambda$ is of the form
\begin{align}
(\widetilde{M},\tilde{g})=(\R\times M,dr^2+e^{4\lambda r}g),
\end{align}
where $(M,g)$ is a Ricci-flat manifold carrying a parallel spinor. Due to rescaling, we may assume that $\lambda=1/2$. Since $(M,g)$ is stable due to \cite[Theorem 1.1]{DWW05}, Theorem \ref{thmexponentialcone} implies that $(\widetilde{M},\tilde{g})$ is strictly stable.
\end{proof}
\begin{rem}
In the case of Ricci flat or positive Einstein manifolds, the presence of parallel resp.\ Killing spinors yields Bochner formulas which imply lower bounds on the eigenvalues of the Einstein operator, see \cite{Wan91,GHP03,DWW05}. Unfortunately, this method can not be used in the case of imaginary Killing spinors, because in contrast to the other cases, their pointwise length is not constant. 
\end{rem}
\noindent
In the remainder of this section we apply Theorem \ref{thmricciflatcone} and Theorem \ref{thmhyperboliccone} in various cases.
\begin{prop}Let $(M,g)$ be Einstein with scalar curvature $n(n-1)$ and suppose that $(M,g)$ is K\"ahler or has nonnegative sectional curvature. Then the smallest eigenvalue of the Einstein operator satisfies the bound 
\begin{align}
\lambda\geq -2(n-1).
\end{align}
\end{prop}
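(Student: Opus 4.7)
The plan is to prove in both cases the stronger statement that the Lichnerowicz Laplacian $\Delta_L h := \nabla^*\nabla h + \ric\circ h + h\circ \ric - 2\mathring{R}h$ is nonnegative on all of $S^2M$. Since the Einstein condition $\ric=(n-1)g$ yields $\Delta_L = \Delta_E + 2(n-1)\,\mathrm{Id}$, the bound $\Delta_L\geq 0$ immediately gives $\Delta_E \geq -2(n-1)$ on $S^2M$, and hence on TT-tensors.

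For the nonnegative sectional curvature case, I work pointwise. At a point $p$, diagonalize $h$ in an orthonormal frame $\{e_i\}$, so $h_{ij}=\mu_i\delta_{ij}$. With the sign convention $\mathring{R}h_{ij}=R_{iklj}h^{kl}$ from the paper, one computes
\begin{align*}
\langle \mathring{R}h,h\rangle_p = \sum_{i,k}R_{ikki}\,\mu_i\mu_k = \sum_{i\neq k}K(e_i,e_k)\,\mu_i\mu_k.
\end{align*}
The identity $2\mu_i\mu_k = \mu_i^2+\mu_k^2-(\mu_i-\mu_k)^2$ together with $\sum_{k\neq i}K(e_i,e_k)=\ric(e_i,e_i)=n-1$ then gives
\begin{align*}
\langle \mathring{R}h,h\rangle_p = (n-1)|h|^2_p - \tfrac{1}{2}\sum_{i\neq k}K(e_i,e_k)(\mu_i-\mu_k)^2 \leq (n-1)|h|^2_p,
\end{align*}
where the last inequality uses $K(e_i,e_k)\geq 0$. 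Therefore $\langle\Delta_L h,h\rangle_p\geq |\nabla h|^2_p\geq 0$, and integration finishes this case.

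For the K\"ahler case the pointwise bound on $\mathring{R}$ can fail, so I would instead exploit the standard identification of $\Delta_L$ with the Hodge Laplacian on a K\"ahler--Einstein manifold. The bundle $S^2M$ splits orthogonally and parallelly as $S^2M = S^2_+M\oplus S^2_-M$ into $J$-invariant and $J$-anti-invariant symmetric $2$-tensors, and both summands are preserved by $\Delta_L$. The map $h\mapsto \alpha_h$ with $\alpha_h(X,Y):=h(JX,Y)$ is an isometric isomorphism from $S^2_+M$ onto the real $(1,1)$-forms, sending trace-free tensors to primitive forms; an analogous identification sends $S^2_-M$ to real $(2,0)+(0,2)$-forms. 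Under both identifications, the Bochner--Kodaira--Nakano formulas on a K\"ahler--Einstein manifold translate $\Delta_L$ into the Hodge Laplacian $\Delta_H$, and $\Delta_H\geq 0$ gives $\Delta_L\geq 0$.

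I expect the main obstacle to be precisely this K\"ahler identification: one must carefully verify that the Einstein equation $\ric=(n-1)g$ is exactly what is required for the curvature terms in the relevant Bochner--Weitzenb\"ock formulas to match up on both $J$-type components. Once that is settled, the nonnegatively-curved case is, by contrast, a short pointwise computation contained in the algebraic rearrangement displayed above.
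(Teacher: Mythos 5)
Your proposal is correct and follows essentially the same route as the paper, which simply cites Barmettler's Lemma 2.4 for the nonnegative-curvature case (your pointwise diagonalization argument is exactly the proof of that lemma) and Koiso's calculations (Besse, pp.\ 362--363) for the K\"ahler case, i.e.\ the statement $\Delta_L\geq 0$ on K\"ahler--Einstein manifolds. One small imprecision in your sketch of the part you yourself flag as the main obstacle: $J$-anti-invariant symmetric $2$-tensors are not $(2,0)+(0,2)$-\emph{forms} (note that $h(J\cdot,\cdot)$ is again symmetric when $h$ is anti-invariant) but sections of $S^{2,0}M\oplus S^{0,2}M$, which Koiso identifies with $T^{1,0}M$-valued $(0,1)$-forms so that $\Delta_L$ becomes the Dolbeault Laplacian there.
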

\begin{proof}
In the K\"ahler case, the bound follows essentially from the calculations in \cite{Koi83}, see also \cite[pp. 362--363]{Bes08}. In the case of nonnegative sectional curvature, the quadratic form 
\begin{align*}h\mapsto (h,\ric\circ h+h\circ\ric-2\mathring{R}h)_{L^2}
 \end{align*}
is nonnegative by \cite[Lemma 2.4]{Bar93}. Then the eigenvalue bound follows since $\ric_g=(n-1)g$.
\end{proof}
\noindent
As a consequence, we get
\begin{thm}
Let $(M^n,g)$, $n\geq9$ be an Einstein manifold of scalar curvature $n(n-1)$ which is K\"ahler or has nonnegative sectional curvature. Then the Ricci-flat cone over $(M,g)$ is stable.
\end{thm}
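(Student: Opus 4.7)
The plan is straightforward: combine the eigenvalue bound from the preceding proposition with the criterion of Theorem \ref{thmricciflatcone}. By the proposition, in both the K\"ahler and the nonnegative sectional curvature cases the smallest eigenvalue $\lambda$ of the Einstein operator (a fortiori, of its restriction to $TT$-tensors) satisfies the lower bound $\lambda\geq -2(n-1)$. Meanwhile, Theorem \ref{thmricciflatcone} tells us that the Ricci-flat cone is stable if and only if the smallest eigenvalue on $TT$-tensors is bounded below by $-(n-1)^2/4$.

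Thus the entire content of the proof is the elementary inequality
\begin{align*}
-2(n-1)\geq -\frac{(n-1)^2}{4},
\end{align*}
which after dividing by $(n-1)>0$ and rearranging becomes $n\geq 9$. This explains the numerical threshold in the hypothesis.

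Hence my proof would be: invoke the proposition to conclude $\lambda\geq -2(n-1)$ on $TT$-tensors; use the dimension assumption $n\geq 9$ to deduce $\lambda\geq -(n-1)^2/4$; then apply Theorem \ref{thmricciflatcone} to conclude stability of the cone. There is no genuine obstacle here, since all the analytic work has already been done in the previous sections and in the cited literature underlying the proposition; the only delicate point is checking that the threshold $n\geq 9$ is exactly what makes the Breitenlohner--Freedman-type bound compatible with the available eigenvalue estimate.
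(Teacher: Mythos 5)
Your proposal is correct and is exactly the argument the paper intends: the theorem is stated "as a consequence" of the preceding proposition, and the only content is the arithmetic $-2(n-1)\geq -\tfrac{(n-1)^2}{4}\iff n\geq 9$ combined with the criterion of Theorem \ref{thmricciflatcone}. Your observation that the proposition's bound on the full Einstein operator restricts a fortiori to $TT$-tensors is the right (and only) point of care.
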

\begin{thm}
Let $(M^n,g)$, $n\geq9$ be an Einstein manifold of scalar curvature $n(n-1)$ which is K\"ahler or has nonnegative sectional curvature. Then the hyperbolic cone over $(M,g)$ is strictly stable.
\end{thm}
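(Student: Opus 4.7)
The plan is to deduce the theorem as an immediate corollary of the preceding proposition together with Theorem \ref{thmhyperboliccone}. The proposition just above supplies a lower bound of $-2(n-1)$ on the smallest eigenvalue of the Einstein operator of the base manifold $(M,g)$; since $TT$-tensors form a subspace of $C^\infty(S^2 M)$, the same bound holds in particular for the smallest eigenvalue $\lambda$ on $TT$-tensors. Theorem \ref{thmhyperboliccone} guarantees (and in fact characterizes) strict stability of the hyperbolic cone under the weaker condition $\lambda \geq -\tfrac{(n-1)^2}{4}$, so the entire proof reduces to comparing these two bounds.

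The key step is the elementary inequality
\begin{align*}
-2(n-1) \;\geq\; -\tfrac{(n-1)^2}{4},
\end{align*}
which is equivalent to $\tfrac{n-1}{4}\geq 2$, i.e.\ $n\geq 9$. This is exactly the dimensional assumption in the hypothesis, and it is sharp for the method: below this threshold, the general eigenvalue bound provided by the proposition is no longer strong enough to fall within the Breitenlohner--Freedman-type range required by Theorem \ref{thmhyperboliccone}.

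Putting the pieces together: for $n\geq 9$ the smallest eigenvalue of $\Delta_E$ on $TT_g$ satisfies $\lambda \geq -2(n-1) \geq -\tfrac{(n-1)^2}{4}$, so Theorem \ref{thmhyperboliccone} applies and yields strict stability of the hyperbolic cone $(\R_+\times M,dr^2+\sinh^2(r)g)$. I do not anticipate any real obstacle: the heavy lifting---the block decomposition of $C_{cs}^\infty(S^2\widetilde M)$ in Section \ref{section2}, the quadratic-form estimates of Section \ref{section3}, and the Kähler/curvature computations underlying the previous proposition---has already been done. The proof is structurally identical to the preceding theorem on Ricci-flat cones, only now invoking Theorem \ref{thmhyperboliccone} in place of Theorem \ref{thmricciflatcone}, and using that the latter result promotes stability to \emph{strict} stability automatically.
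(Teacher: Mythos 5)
Your proposal is correct and is exactly the paper's argument: the theorem is stated as an immediate consequence of the preceding proposition (eigenvalue bound $-2(n-1)$) combined with Theorem \ref{thmhyperboliccone}, and the comparison $-2(n-1)\geq -\tfrac{(n-1)^2}{4}$ for $n\geq 9$ is the whole content. The paper does not even write out a separate proof, so there is nothing further to check.
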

On the other hand, as was pointed out in \cite{HHS14}, $-2(n-1)\in \spectrum(\Delta_E|_{TT})$ if $(M,g)$ is a product of positive Einstein manifolds or if $(M,g)$ is a positive K\"ahler-Einstein manifolds with $\dimn(H_{1,1}(M))>1$. Thus, we obtain
\begin{thm}[{\cite[Theorem 1.1 and Theorem 1.2]{HHS14}}]
Let $(M^n,g)$, $4\leq n\leq8$ be an Einstein manifold of scalar curvature $n(n-1)$ which is K\"ahler with $\dimn(H^{1,1}(M))>1$ or is a product of Einstein manifolds. Then the Ricci-flat cone over $(M,g)$ is unstable.
\end{thm}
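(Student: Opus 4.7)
The plan is to combine Theorem~\ref{thmricciflatcone} with the spectral observation recalled just before the statement, reducing everything to a dimensional arithmetic check. By Theorem~\ref{thmricciflatcone}, the Ricci-flat cone over $(M,g)$ is unstable if and only if $\Delta_E$ acting on $TT$-tensors admits an eigenvalue strictly less than $-(n-1)^2/4$. Under either hypothesis on $(M,g)$ the preceding paragraph records that $-2(n-1) \in \spectrum(\Delta_E|_{TT_g})$, so the theorem reduces to verifying
\begin{align*}
-2(n-1) < -\frac{(n-1)^2}{4},
\end{align*}
which, after multiplying by $-1$ and dividing by $(n-1)>0$, becomes $n-1 < 8$, i.e.\ $n \leq 8$. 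Together with the standing assumption $n \geq 4$, this is exactly the stated dimension range, so invoking Theorem~\ref{thmricciflatcone} finishes the argument.

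For completeness I would briefly indicate where the eigenvalue $-2(n-1)$ comes from in each case. In the product case $(M,g) = (M_1^{n_1},g_1) \times (M_2^{n_2},g_2)$, the Einstein condition on the product forces $\ric_{g_i} = (n-1) g_i$ on each factor, and the natural candidate is the parallel tensor
\begin{align*}
h = n_2\, \pi_1^* g_1 - n_1\, \pi_2^* g_2,
\end{align*}
which is trace-free by construction, divergence-free because it is parallel, and satisfies $\nabla^*\nabla h = 0$; a block-diagonal computation using $\mathring{R}(g_i) = (n-1) g_i$ on each factor gives $\mathring{R} h = (n-1) h$, hence $\Delta_E h = -2(n-1)h$. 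In the Kähler case, the assumption $\dimn(H^{1,1}(M)) > 1$ supplies a primitive harmonic $(1,1)$-form $\omega$ linearly independent of the Kähler form, and the associated symmetric tensor $h(X,Y) = \omega(X, JY)$ is a $TT$-eigentensor of $\Delta_E$ with eigenvalue $-2(n-1)$; this is the Koiso-type computation already invoked in the previous proposition.

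The only real work therefore sits in the two ingredients that are being used as inputs: the threshold $-(n-1)^2/4$ produced by the analysis of the warped product in Sections~\ref{section2}--\ref{section3}, and the existence of the TT-eigentensor realising $-2(n-1)$. The main obstacle, were one to reprove the latter from scratch, would be the Kähler case: one has to check carefully that the correspondence between primitive $(1,1)$-forms and symmetric $2$-tensors via $J$ intertwines the Hodge Laplacian on forms with the Einstein operator on tensors in just the right way to output eigenvalue $-2(n-1)$, a computation whose sign and normalization rely jointly on the Einstein and Kähler conditions.
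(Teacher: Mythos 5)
Your proposal is correct and follows exactly the paper's route: the paper likewise just quotes from \cite{HHS14} that $-2(n-1)\in\spectrum(\Delta_E|_{TT})$ under either hypothesis and combines this with Theorem \ref{thmricciflatcone}, the dimension range $4\leq n\leq 8$ being precisely the condition $-2(n-1)<-\tfrac{(n-1)^2}{4}$. Your sketches of the eigentensor constructions (the parallel trace-free tensor $n_2\pi_1^*g_1-n_1\pi_2^*g_2$ in the product case and the Koiso correspondence with primitive harmonic $(1,1)$-forms in the K\"ahler case) are accurate and go beyond what the paper itself writes down, which simply cites \cite{HHS14} for that input.
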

\begin{thm}
Let $(M^n,g)$, $4\leq n\leq8$ be an Einstein manifold of scalar curvature $n(n-1)$ which is K\"ahler with $\dimn(H^{1,1}(M))>1$ or is a product of Einstein manifolds. Then the hyperbolic cone over $(M,g)$ is unstable.
\end{thm}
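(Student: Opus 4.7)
The plan is to reduce this theorem to a direct application of Theorem \ref{thmhyperboliccone} combined with the spectral fact cited right before the statement. Recall that Theorem \ref{thmhyperboliccone} characterizes stability of the hyperbolic cone by the condition $\lambda \geq -\frac{(n-1)^2}{4}$ on the bottom eigenvalue of $\Delta_E|_{TT}$. So to prove instability, I only need to exhibit a $TT$-tensor whose Einstein-operator eigenvalue is strictly less than $-\frac{(n-1)^2}{4}$.

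The key input, noted by the authors from \cite{HHS14}, is that under either of the two hypotheses on $(M,g)$ (K\"ahler-Einstein with $\dim H^{1,1}(M) > 1$, or a product of positive Einstein manifolds), the value $-2(n-1)$ actually lies in $\spectrum(\Delta_E|_{TT_g})$. In the K\"ahler case the relevant destabilizing tensor comes from the primitive $(1,1)$-part, essentially the Koiso construction, and in the product case it is the classical $(\ric_1/(n_1-1)) \oplus (-\ric_2/(n_2-1))$-type tensor which is $TT$ and an eigentensor of $\Delta_E$ at $-2(n-1)$ after the scalar curvature has been normalized to $n(n-1)$.

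So the single remaining arithmetic check is that $-2(n-1) < -\frac{(n-1)^2}{4}$ in the stated range. This is equivalent, after dividing by $-(n-1)<0$, to $2 > \frac{n-1}{4}$, i.e., to $n < 9$. Hence for $4 \leq n \leq 8$ the eigenvalue bound of Theorem \ref{thmhyperboliccone} is violated, and the theorem applies to conclude instability of the hyperbolic cone.

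There is no real obstacle here: all the heavy lifting has been done in Theorem \ref{thmhyperboliccone} (the converse direction, producing an explicit compactly supported destabilizing test section $\tilde h \in C^\infty_{cs}(S^2 \widetilde M)$ from an eigentensor of $\Delta_E$ on the base) and in the spectral computation in \cite{HHS14}. The only thing worth emphasizing in the write-up is the dimensional threshold $n \leq 8$ coming from the elementary inequality $-2(n-1) < -\frac{(n-1)^2}{4} \iff n < 9$, mirroring exactly the threshold $n \geq 9$ appearing in the preceding stability theorems.
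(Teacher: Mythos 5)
Your proposal is correct and is essentially identical to the paper's own argument: the paper derives this theorem directly from the fact (quoted from \cite{HHS14}) that $-2(n-1)\in\spectrum(\Delta_E|_{TT})$ under either hypothesis, together with the elementary inequality $-2(n-1)<-\tfrac{(n-1)^2}{4}$ for $n<9$ and the characterization in Theorem \ref{thmhyperboliccone}. Nothing further is needed.
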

Suppose that $(M,g)$ Einstein with scalar curvature $n(n-1)$ and carries a real Killing spinor. Then all eigenvalues of $\Delta_E|_{TT}$ have the lower bound $-\frac{1}{4}(n-1)^2$, or equivalently, all eigenvalues of the Lichnerowicz Laplacian on $TT$-tensors have the lower bound $4-\frac{(n-5)^2}{4}$ \cite{GHP03}. Thus, the Ricci flat cone over it is stable, but this is already clear since the cone carries a parallel spinor. The additional information that we get is
\begin{thm}
Let $(M,g)$ be an Einstein manifold of scalar curvature $n(n-1)$ which carries a real Killing spinor. Then the hyperbolic cone over $(M,g)$ is strictly stable.
\end{thm}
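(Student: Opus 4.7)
My proof plan is to combine the eigenvalue bound from the presence of a real Killing spinor with Theorem \ref{thmhyperboliccone}; this reduces the statement to a direct verification of the hypothesis of that theorem.

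First I would recall, as noted in the paragraph preceding the statement, that on an Einstein manifold $(M,g)$ of scalar curvature $n(n-1)$ admitting a real Killing spinor, every eigenvalue $\lambda$ of the Einstein operator $\Delta_E$ restricted to $TT$-tensors satisfies
\begin{align*}
\lambda \geq -\frac{(n-1)^2}{4},
\end{align*}
which is the bound established in \cite{GHP03} via a Weitzenböck-type argument: one twists a $TT$-eigentensor with the real Killing spinor to build a spinor-valued $1$-form, and the nonnegativity of the square of an associated Dirac-type operator yields the stated estimate. I would not reprove this bound but quote it directly.

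Second, I would observe that the lower bound $-\frac{(n-1)^2}{4}$ is exactly the critical value appearing in the hypothesis of Theorem \ref{thmhyperboliccone}. Consequently, Theorem \ref{thmhyperboliccone} applies, and we conclude that the hyperbolic cone
\begin{align*}
(\widetilde{M},\tilde{g}) = (\R_+ \times M,\, dr^2 + \sinh^2(r)\,g)
\end{align*}
is strictly stable. This finishes the proof.

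The argument presents no genuine obstacle: the main work has already been done in Theorem \ref{thmhyperboliccone} and in the cited eigenvalue estimate. The only point worth emphasizing is the exact matching of the Killing-spinor bound with the Breitenlohner--Freedman threshold, which is the phenomenon responsible for the clean transition from a base manifold carrying extra spinorial structure to strict stability of the hyperbolic cone over it.
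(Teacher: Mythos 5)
Your proposal is correct and follows exactly the paper's argument: quote the bound $\Delta_E|_{TT}\geq-\frac{(n-1)^2}{4}$ for Einstein manifolds with a real Killing spinor from \cite{GHP03}, and then invoke Theorem \ref{thmhyperboliccone}, whose ``in this case strictly stable'' clause yields the conclusion. No discrepancies to report.
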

Another class of Einstein manifolds satisfying the bound $\Delta_E|_{TT}\geq-\frac{1}{4}(n-1)^2$ are the symmetric spaces of compact type. Since symmetric spaces of compact type have nonnegative sectional curvature, this bound has to be checked just in dimensions $4\leq n\leq 8$. For this cases, it suffices to consider the tables in \cite[pp.\ 243-245]{CH13}, where the smallest eigenvalues of the Lichnerowicz Laplacian on those spaces is collected.
\begin{thm}
The Ricci-flat cone and the hyperbolic cone over any symmetric space of compact type are stable resp.\ strictly stable.
\end{thm}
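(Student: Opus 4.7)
The strategy is to verify the TT-eigenvalue bound $\lambda\geq -\frac{(n-1)^2}{4}$ for the Einstein operator on any compact symmetric space of compact type normalized to $\scal_g=n(n-1)$, and then quote Theorem \ref{thmricciflatcone} and Theorem \ref{thmhyperboliccone}. There is nothing new to prove about the cone itself; all the work goes into the eigenvalue estimate on the base.

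I would split the verification into two regimes according to dimension. Compact symmetric spaces of compact type have nonnegative sectional curvature, so the proposition on the nonnegatively curved case applies and gives $\Delta_E|_{TT}\geq -2(n-1)$. The inequality $-2(n-1)\geq -\frac{(n-1)^2}{4}$ is equivalent to $n-1\geq 8$, so for all dimensions $n\geq 9$ the required Breitenlohner--Freedman type bound is already implied by nonnegative curvature and nothing further is needed. For $n\leq 3$ the Einstein condition with $\scal_g=n(n-1)$ forces constant sectional curvature, so $\tilde g$ is Euclidean or hyperbolic space and the claim is classical.

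The genuine work is therefore confined to the finitely many dimensions $4\leq n\leq 8$, where I would argue case-by-case using the classification of irreducible compact symmetric spaces. On a compact Einstein manifold with $\ric_g=(n-1)g$ the Lichnerowicz Laplacian and the Einstein operator are related on TT-tensors by $\Delta_L=\Delta_E+2(n-1)$, so the target bound $\Delta_E\geq -\frac{(n-1)^2}{4}$ translates into
\begin{equation*}
\Delta_L|_{TT}\geq 2(n-1)-\frac{(n-1)^2}{4}=\frac{(n-1)(9-n)}{4}.
\end{equation*}
The tables in \cite[pp. 243--245]{CH13} tabulate the smallest TT-eigenvalue of $\Delta_L$ for every irreducible compact symmetric space, after a standard normalization. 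My plan is to go through each entry in dimensions $4,5,6,7,8$, renormalize the scalar curvature to $n(n-1)$ (which rescales the eigenvalues by the same factor), and verify the displayed inequality. Since the right-hand side becomes small or even negative for $n$ near $8$, most cases are comfortable; the tight cases will sit in dimension $4$ where the threshold $\frac{(n-1)(9-n)}{4}=\frac{15}{4}$ is largest.

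Once the bound is established for all dimensions, Theorem \ref{thmricciflatcone} yields stability of the Ricci-flat cone and Theorem \ref{thmhyperboliccone} yields strict stability of the hyperbolic cone, concluding the proof. The only real obstacle is the finite case-check in dimensions $4$ through $8$; it is completely mechanical but requires care with the normalization convention of \cite{CH13} so that the shift $2(n-1)$ between $\Delta_L$ and $\Delta_E$ is applied consistently with the chosen scaling.
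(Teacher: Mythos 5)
Your proposal follows the paper's argument exactly: the proposition on nonnegatively curved Einstein manifolds gives $\Delta_E|_{TT}\geq -2(n-1)$, which implies the bound $-\frac{(n-1)^2}{4}$ precisely when $n\geq 9$, and the remaining dimensions $4\leq n\leq 8$ are settled by consulting the tables of smallest Lichnerowicz eigenvalues in \cite[pp.~243--245]{CH13}, after which Theorems \ref{thmricciflatcone} and \ref{thmhyperboliccone} apply. The paper leaves the finite table check just as implicit as you do, so there is no substantive difference between the two arguments.
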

Many more positive Einstein manifolds violating the bound $\Delta_E|_{TT}\geq-\frac{1}{4}(n-1)^2$ are known. The reader is referred to \cite{PP84a,PP84b,GH02,GHP03,HHS14} for further examples.


\begin{thebibliography}{DWW07}


\providecommand{\url}[1]{\texttt{#1}}
\expandafter\ifx\csname urlstyle\endcsname\relax
  \providecommand{\doi}[1]{doi: #1}\else
  \providecommand{\doi}{doi: \begingroup \urlstyle{rm}\Url}\fi

\bibitem[AM11]{AMo11}
\textsc{Andersson}, Lars ; \textsc{Moncrief}, Vincent:
\newblock {Einstein spaces as attractors for the Einstein flow.}
\newblock {In: }\emph{J. Differ. Geom.} \textbf{89} (2011), no. 1, 1--47

\bibitem[Bam15]{Bam15}
\textsc{Bamler}, Richard:
\newblock{Stability of symmetric spaces of noncompact type under Ricci flow.}
\newblock {In: }\emph{Geom. Funct. Anal.} \textbf{25} (2015), no 2, 342--416

\bibitem[Bar93]{Bar93}
\textsc{Barmettler}, Urs:
\newblock \emph{On the {L}ichnerowicz {L}aplacian.} PhD thesis, ETH Z\"urich, 1993

\bibitem[Bau89]{Bau89}
\textsc{Baum}, Helga:
\newblock {Complete Riemannian manifolds with imaginary Killing spinors.}
\newblock {In: Ann. Glob. Anal. Geom.} \textbf{7} (1989), no. 3, 205--226

\bibitem[Ber65]{Ber65}
\textsc{Berger}, Marcel:
\newblock {Sur les vari\'et\'es d'Einstein compactes.}
\newblock {In: }\emph{Comptes {R}endus de la {III}e {R}\'eunion du {G}roupement
  des {M}ath\'ematiciens d'{E}xpression {L}atine}.
\newblock 1965, 35--55

\bibitem[Bes08]{Bes08}
\textsc{Besse}, Arthur~L.:
\newblock \emph{{Einstein manifolds. Reprint of the 1987 edition.}}
\newblock {Berlin: Springer}, 2008
%
%
%
%

\bibitem[CHI04]{CHI04}
\textsc{Cao}, Huai-Song ; \textsc{Hamilton}, Richard  ; \textsc{Ilmanen}, Tom:
\newblock Gaussian densities and stability for some {R}icci solitons.
\newblock   Preprint,
\newblock arXiv:math/0404165


\bibitem[CH15]{CH13}
\textsc{Cao}, Huai-Dong ; \textsc{He}, Chenxu:
\newblock Linear {S}tability of {P}erelmans $\nu$-entropy on {S}ymmetric spaces
  of compact type.
\newblock  In: \emph{J. Reine Angew. Math.} \textbf{709} (2015), 229--246.

%

%
\bibitem[Dai07]{Dai07}
\textsc{Dai}, Xianzhe:
\newblock Stability of {E}instein {M}etrics and {S}pin {S}tructures.
\newblock {In: }\emph{Proceedings of the 4th International Congress of Chinese
  Mathematicians} Vol II (2007), 59--72

\bibitem[DWW05]{DWW05}
\textsc{Dai}, Xianzhe ; \textsc{Wang}, Xiaodong  ; \textsc{Wei}, Guofang:
\newblock {On the stability of Riemannian manifold with parallel spinors.}
\newblock {In: }\emph{Invent. Math.} \textbf{161} (2005), no. 1, 151--176
%
%

\bibitem[FIK03]{FIK03}
\textsc{Feldman}, Mikhail ; \textsc{Ilmanen}, Tom ; \textsc{Knopf}, Dan:
\newblock Rotationally symmetric shrinking and expanding gradient K\"ahler-Ricci solitons.
\newblock {In: }\emph{J. Differ. Geom.} \textbf{65} (2003), no 2, 169--209

\bibitem[GIK02]{GIK02}
\textsc{Guenther}, Christine ; \textsc{Isenberg}, James ; \textsc{Knopf}, Dan:
\newblock {Stability of the Ricci flow at Ricci-flat metrics.}
\newblock {In: }\emph{Comm. Anal. Geom.} \textbf{10} (2002), no. 4, 741--777

\bibitem[GK04]{GK04}
\textsc{Gastel}, Andreas ; \textsc{Kronz}, Manfred:
\newblock A family of expanding Ricci solitons.
\newblock {In: }\emph{Variational Problems in Riemannian Geometry.} 2004, 81--93

\bibitem[GH02]{GH02}
\textsc{Gibbons}, Gary~W. ; \textsc{Hartnoll}, Sean~A.:
\newblock Gravitational instability in higher dimensions.
\newblock {In: }\emph{Phys. Rev. D.} \textbf{66} (2002), no. 6

\bibitem[GHP03]{GHP03}
\textsc{Gibbons}, Gary~W. ; \textsc{Hartnoll}, Sean~A.  ; \textsc{Pope},
  Christopher~N.:
\newblock Bohm and {E}instein-{S}asaki {M}etrics, {B}lack {H}oles, and
  {C}osmological {E}vent {H}orizons.
\newblock {In: }\emph{Phys. Rev. D} \textbf{67} (2003), no. 8

%

\bibitem[GPY82]{GPY82}
\textsc{Gross}, David~J. ; \textsc{Perry}, Malcolm~J.  ; \textsc{Yaffe},
  Laurence~G.:
\newblock Instability of flat space at finite temperature.
\newblock {In: }\emph{Phys. Rev. D} \textbf{25} (1982), no. 2, 330--355

\bibitem[HHS14]{HHS14}
\textsc{Hall}, Stuart ; \textsc{Haslhofer}, Robert; \textsc{Siepmann}, Michael:
\newblock The stability inequality for Ricci-flat cones.
\newblock {In: }\emph{J. Geom. Anal.} (2014), no. 1, 472--494.

\bibitem[HM14]{HM14}
 \textsc{Haslhofer}, Robert; \textsc{M\"{u}ller}, Reto:
\newblock Dynamical stability and instability of Ricci-flat metrics.
\newblock {In: }\emph{Math. Ann.} (2014), no. 1-2, 547--553.
%

\bibitem[Koi83]{Koi83}
\textsc{Koiso}, Norihito:
\newblock {Einstein metrics and complex structures.}
\newblock {In: }\emph{Invent. Math.} \textbf{73} (1983), 71--106

\bibitem[Kr\"o13]{Kro13}
\textsc{Kr\"oncke}, Klaus:
\newblock {Ricci flow, Einstein metrics and the Yamabe invariant.}
\newblock Preprint, arXiv:1312.2224

\bibitem[Kr\"o15a]{Kro15}
\textsc{Kr\"oncke}, Klaus:
\newblock {On infinitesimal Einstein deformations.}
\newblock {In: } \emph{Differ. Geom. Appl.} \textbf{38} (2015), 41--57 

\bibitem[Kr\"o15b]{Kro15b}
\textsc{Kr\"oncke}, Klaus:
\newblock {Stability and instability of Ricci solitons.}
\newblock {In: } \emph{ Calc. Var. Partial Differ. Equ.} \textbf{53} (2015), no. 1-2, 265--287 
%

%
%
%

\bibitem[Oba62]{Ob62}
\textsc{Obata}, Morio:
\newblock {Certain conditions for a Riemannian manifold to be isometric with a
  sphere.}
\newblock {In: }\emph{J. Math. Soc. Japan} \textbf{14} (1962), 333--340

\bibitem[PP84a]{PP84b}
\textsc{Page}, Don~N. ; \textsc{Pope}, Christopher~N.:
\newblock Stability analysis of compactifications of {D} = 11 supergravity with
  {SU}(3)$\times${SU}(2)$\times${U}(1) symmetry.
\newblock {In: }\emph{Phys. Lett.} \textbf{145} (1984), no. 5, 337--341

\bibitem[PP84b]{PP84a}
\textsc{Page}, Don~N. ; \textsc{Pope}, Christopher~N.:
\newblock Which compactifications of {D} = 11 supergravity are stable?
\newblock {In: }\emph{Phys. Lett.} \textbf{144} (1984), no. 5-6, 346--350

\bibitem[Ses06]{Ses06}
\textsc{Sesum}, Natasa:
\newblock {Linear and dynamical stability of Ricci-flat metrics.}
\newblock {In: }\emph{Duke Math. J.} \textbf{133} (2006), no. 1, 1--26

\bibitem[Sie13]{Sie13}
\textsc{Siepmann}, Michael:
\newblock \emph{Ricci Flows of Ricci flat Cones}
\newblock PhD thesis, ETH Z\"urich, 2013

\bibitem[SSS08]{SSS08}
\textsc{Schn\"urer}, Oliver; \textsc{Schulze}, Felix; \textsc{Simon}, Miles:
\newblock{Stability of Euclidean space under Ricci flow}
\newblock{In: }\emph{Comm. Anal. Geom.} \textbf{16} (2008), no. 1, 127--158.

\bibitem[SS13]{SS13}
\textsc{Schulze}, Felix; \textsc{Simon}, Miles:
\newblock{Expanding solitons with non-negative curvature operator coming out of cones.}
\newblock {In: }\emph{Math. Z.} \textbf{275} (2013), no. 1-2, 625--639.


%

\bibitem[Wan91]{Wan91}
\textsc{Wang}, McKenzie~Y.:
\newblock {Preserving parallel spinors under metric deformations.}
\newblock {In: }\emph{Indiana Univ. Math. J.} \textbf{40} (1991), no. 3, 815--844

\bibitem[War06]{War06}
\textsc{Warnick}, Claude:
\newblock {Semi-classical stability of AdS NUT instantons.}
\newblock {In: }\emph{Class. Quant. Grav.} \textbf{23} (2006), no. 11, 3801


%
%

\bibitem[Zhu11]{Zhu11}
\textsc{Zhu}, Meng:
\newblock {The second variation of the Ricci expander entropy}
\newblock {In: }\emph{Pac. J. Math.} \textbf{251} (2011), no. 2, 499--510

\end{thebibliography}
\end{document}